\documentclass{amsart}
\usepackage{amsmath,amssymb,enumerate,verbatim}
\usepackage{amsmath, amssymb}
\usepackage{srcltx}
\usepackage{amsthm}
\usepackage{lipsum}
\usepackage{comment}
\usepackage{float}

\usepackage{booktabs}
\usepackage{longtable}
\usepackage{array}
\usepackage{pdflscape}
\usepackage[colorlinks=true,
            linkcolor=blue,
            citecolor=blue,
            urlcolor=blue]{hyperref}

\newif\ifshowoldtable
\showoldtablefalse

\usepackage[usenames,dvipsnames]{xcolor}
\usepackage{graphicx}
\usepackage{multirow}
\usepackage{booktabs}
\usepackage{mathtools,bm}
\usepackage{blkarray}
\usepackage{tikz} 
\usepackage{multirow}
\usepackage{array,longtable}
\usetikzlibrary{matrix,arrows}
\usepackage{dsfont}
\usepackage{float}
\usepackage{perpage}
\usepackage{mathrsfs}
\usepackage[scr=boondoxo]{mathalpha}

\DeclareMathOperator{\Lift}{Lift}
\newcommand{\LLift}{\mathscr{Lift}}

\usepackage[all]{xy}

\newcommand{\atlas}{\texttt{atlas}}
\newcommand{\LiftC}{\Lift_G^{\tu G}(\bbC)}

\newcommand{\ep}{{\epsilon}}

\newcommand{\bbC}{{\mathbb{C}}}

\newcommand{\bbR}{{\mathbb R}}

\newcommand{\bbZ}{{\mathbb Z}}

\newcommand{\cal}{\mathcal}

\newcommand{\fk}{\mathfrak}
\newcommand{\ovl}{\overline}

\newcommand{\Spin}{{\mathrm{Spin}}}
\newcommand{\SL}{{\mathrm{SL}}}
\newcommand{\GL}{{\mathrm{GL}}}

\newcommand{\sgn}{{\mathrm{sgn}}}

\newcommand{\Ind}{{\mathrm{Ind}}}

\newcommand{\SU}{\mathrm{SU}}
\newcommand{\adm}{\mathrm{adm}}

\newtheorem{thm}{Theorem}[section]
\newtheorem{prop}[thm]{Proposition}
\newtheorem{lemma}[thm]{Lemma}
\newtheorem{cor}[thm]{Corollary}
\newtheorem{remark}[thm]{Remark}

\theoremstyle{definition}
\newtheorem{definition}[thm]{Definition}
\newtheorem{notation}[thm]{Notation}

\newcommand{\calB}{{\mathcal{B}}}

\newcommand{\calF}{{\mathcal{F}}}

\newcommand{\calO}{{\mathcal{O}}}

\newcommand{\calR}{{\mathcal{R}}}

\newcommand{\tu}{\widetilde}

\begin{document}

%\subjclass{22E46, 22E47}

\title{Lift of the Trivial Representation:\\ The Nonsplit Type $D$ Case}

\author{Wan-Yu Tsai}

 \thanks{The author is supported by the National Science and Technology Council of Taiwan under grant no. 114-2115-M-008-003-MY3.}

\curraddr{
Department of Mathematics\\
National Central University\\
Taoyuan 320 \\
Taiwan} 
\email{wytsai@math.ncu.edu.tw}

\begin{abstract}

Let $\widetilde G$ be a nonlinear double cover of the real points of a
connected, simply connected, semisimple complex group.
In \cite{Ts19}, we introduced
$\prod_{\rho/2}^s(\tu G)$, the set of genuine small representations
with infinitesimal character $\rho/2$ in the simply laced setting.
In \cite{Ts23}, we showed that, when $G$ is split, these representations
are exhausted by the Kazhdan--Patterson lift of the trivial representation.  In this paper, we consider nonsplit real Spin groups of type $D_n$. For each
$\rho/2$-compatible genuine central character, we determine the irreducible
constituents of
$\Lift_G^{\widetilde G}(\bbC)$.
We also describe explicitly the relevant genuine small representations in terms
of Langlands parameters. The nonsplit case exhibits a new phenomenon: the lift
of the trivial representation need not exhaust
$\prod\nolimits^s_{\rho/2}(\widetilde G).
$ 
For the families considered here, equality holds only for
\(\widetilde{\Spin}(2p+2,2p)\); in the other two families the lift is a proper
subset, and in one case it is zero.

\end{abstract}

\maketitle

\section{Introduction}

Assume that $G_\bbC$ is a connected, simply connected, semisimple complex
algebraic group, and let $G$ be a real form of $G_\bbC$ with nontrivial
fundamental group. Then $G$ admits a nonlinear double cover $\tu G$, which is
not a matrix group; see \cite[Proposition 3.6]{AT12}. In fact, most real forms
of $G_\bbC$ admit nonlinear double covers (see \cite{A04}). Examples include
$\SL(n,\bbR)$, $\SU(p,q)$, $\Spin(p,q)$, and most real forms of exceptional
groups.
\medskip

The representation theory of nonlinear covering groups has also been
developed extensively over nonarchimedean local fields, particularly in
the framework of metaplectic and Brylinski--Deligne covering groups;
see, for example, \cite{BD01,GGW18,Wei18,GG18} and the references therein.
Although the archimedean and nonarchimedean theories involve rather different techniques, several common themes include lifting correspondences, small representations, and their connections with nilpotent orbits. These parallel themes provide part of the broader motivation for studying the lift of the trivial representation and its small constituents in the present paper.

\medskip 

In this paper, we study a basic source of genuine representations of nonlinear double covers of real reductive groups: the lift of the trivial representation. In previous work, we showed that for split simply laced groups this lift produces all genuine small representations attached to a particular nilpotent orbit. The nonsplit case turns out to behave quite differently. For nonlinear double covers of real Spin groups of type $D$, we show that, depending on the real form, the lift may exhaust the corresponding set of small representations, produce only a proper subset, or vanish entirely. Thus these groups provide a concrete family in which the interaction among lifting, genuine small representations, and nilpotent geometry can be seen explicitly.

\medskip

The present work is a continuation of \cite{Ts19} and \cite{Ts23}. Our main object is
the set of irreducible genuine representations occurring in the
Kazhdan--Patterson lift of the trivial representation. We denote this set by
$$
\LLift(\bbC).
$$
Here the lifting operator is understood in the sense of Adams--Herb
\cite{AHe10}: for a stable admissible representation $\pi$ of $G$, the lift
$\Lift_G^{\tu G}(\pi)$ is a genuine virtual representation of $\tu G$, or zero.
Thus we may write
\[
\Lift_G^{\tu G}(\pi)=\sum_{\tu\pi} a_{\tu\pi}\tu\pi,
\qquad a_{\tu\pi}\in\bbZ,
\]
and define
\[
\LLift(\pi)=\{\tu\pi\mid a_{\tu\pi}\neq 0\}.
\]

The lifting of characters for nonlinear groups was first developed by
Kazhdan and Patterson for \(\GL(n)\) in \cite{KP84,KP86}.  For general
linear groups over the archimedean fields, Tadi\'c treated
\(\GL(n,\bbC)\) in \cite{T96}, while Adams and Huang gave a detailed analysis for
\(\GL(n,\bbR)\) in \cite{AHu97}.  The Shimura lifts of pseudospherical
principal series representations were studied in \cite{ABPTV07}.
More recently, we have also studied the lifts of certain special
unipotent representations for complex and real groups in the preprint
\cite{TWZ} and a forthcoming work \cite{TsW}, respectively.
\medskip

Related Shimura correspondences for covering groups over nonarchimedean
local fields have been studied from several points of view.  Savin's
local Shimura correspondence compares genuine representations of a
covering group with representations of a linear group through
Iwahori--Hecke algebras \cite{Sa88}.  For metaplectic groups, Gan--Savin
established Hecke algebra correspondences between the Bernstein
components containing the even and odd Weil representations and
Iwahori-spherical components of odd orthogonal groups
\cite{GS12}; this correspondence was extended to residual
characteristic two by Takeda--Wood \cite{TW18}.  More recently, Wang
has developed type-theoretic and affine Hecke algebra methods for tame
genuine principal-series components, obtaining a broader form of local
Shimura correspondence \cite{WangShimura}.  These results provide
another manifestation of the general principle that genuine
representations of nonlinear covering groups may be related to
representations of linear groups through lifting or Shimura-type
correspondences.
\medskip

The present series of papers is especially motivated by the lifting of
one-dimensional representations of $\GL(n,\bbR)$.  In particular,
the lifts of the trivial and sign representations are described
explicitly in terms of the representations $T_n$, $T_n(\chi_0)$,
and $\operatorname{Speh}(1/2)$, depending on the parity of $n$;
see \cite{H90,AHu97}.  These results motivate the analogous question
for other simply laced real groups: we seek to determine the
irreducible constituents of the lift of a one-dimensional
representation.
\medskip

In the present sequel, $G$ is the group of real points of a connected,
simply connected, semisimple, simply laced complex group, and $\tu G$
is the nonlinear double cover of $G$.  Under these assumptions, the
only one-dimensional representation of $G$ is the trivial
representation $\bbC$.  Thus
$\Lift_G^{\tu G}(\bbC)$
provides a natural source of genuine representations of $\tu G$.

\medskip
The representations arising from such lifts are closely related to
small genuine representations.  One example is  the
irreducible quotient of the pseudospherical principal series studied
in \cite{ABPTV07}.  There are parallel constructions over
nonarchimedean local fields, most notably the theta
representations of covering groups.  These representations have been
studied extensively from the viewpoints of Whittaker models, Fourier
coefficients, and nilpotent orbits; see, for example,
\cite{KP84,GRS97,Gao17,Cai19,GT22,GLT25}.  For related constructions
and broader discussions of small representations, see also
\cite{BFG03,Kap17}.  An important theme in this literature is the
determination of wavefront sets of theta representations.  The
resulting nilpotent orbits exhibit a close parallel with the associated
varieties arising in the archimedean theory.

\medskip
More precisely, in \cite{Ts19} we introduced a set
$
\prod\nolimits_\lambda^s(\tu G)
$
of genuine small representations with infinitesimal character
$\lambda$.  For simply laced groups, the relevant infinitesimal
character is $\lambda=\rho/2$.  These small representations are
attached to a certain complex nilpotent orbit $\calO$, and they are
shown to be precisely the genuine representations with complex
associated variety $\calO$.  These small representations also have
important algebraic properties.  For example, their $K$-types are
multiplicity-free, and they include the representations obtained from
the trivial representation by the Shimura lift; see \cite{ABPTV07}.

\medskip

For the present paper, the important consequence is that, when $G$ is
simply laced,
$$
\LLift(\bbC)\subseteq \prod\nolimits_{\rho/2}^s(\tu G).
$$
Thus the set $\prod\nolimits_{\rho/2}^s(\tu G)$ provides a concrete
list of possible constituents of the lift.

The nilpotent orbit $\calO$ imposes strong restrictions on the real
form $G$.  Indeed, if
$$
\calO\cap\fk g_0=\emptyset,
$$
then
$$
\Lift_G^{\tu G}(\bbC)=0,
$$
where $\fk g_0$ is the Lie algebra of $G$.  Hence it is enough to
consider the real forms for which
\begin{equation}\label{e:nonempty-cap}
\calO\cap\fk g_0\neq\emptyset.
\end{equation}
These real forms are listed in \cite[Table~2]{Ts19}.  Most of them are
split or quasi-split, with only a few nonsplit exceptions.

In \cite{Ts23}, we treated the split case.  When $G$ is split of
simply laced type, we proved that
$$
\LLift(\bbC)=
\prod\nolimits_{\rho/2}^s(\tu G).
$$
Equivalently, every genuine small representation with infinitesimal
character $\rho/2$ occurs in the lift of the trivial representation.

According to \cite[Table 2]{Ts19}, the remaining nonsplit real forms satisfying
\eqref{e:nonempty-cap} are
\begin{align*}
A_{2m-1}: \ & \SU(m,m),\\
A_{2m}: \ & \SU(m+1,m),\\
D_n:\  & \Spin(n+1,n-1),\ \Spin(n+2,n-2),\\
E_6: \ & E_{6(2)}, \ \text{the quasi-split real form of } E_6(\bbC).
\end{align*}
For type \(D_n\), however, the condition \eqref{e:nonempty-cap} is satisfied
by \(\Spin(n+2,n-2)\) only when \(n\) is odd. Indeed, the complex nilpotent
orbit \(\calO\) under consideration is
\[
[3\,2^{n-2}\,1]
\quad\text{if \(n\) is even,}
\]
and
\[
[3\,2^{n-3}\,1^3]
\quad\text{if \(n\) is odd.}
\]
Thus, in type \(D\), the real Spin groups satisfying \eqref{e:nonempty-cap}
are the split groups, together with the following nonsplit families:
\[
\Spin(2p+1,2p-1),\quad
\Spin(2p+2,2p),\quad
\Spin(2p+3,2p-1).
\]
The present paper completes the nonsplit type \(D\) case. The type \(A\) and
type \(E\) cases will be treated elsewhere.

Thus, throughout this paper, we consider
\[
\tu G\in
\{\tu{\Spin}(2p+1,2p-1),
\tu{\Spin}(2p+2,2p),
\tu{\Spin}(2p+3,2p-1)\}.
\]
For the unequal-rank families
$\Spin(2p+1,2p-1)
\text{ and } 
\Spin(2p+3,2p-1),$
we assume \(p\geq 2\). For the equal-rank family $\Spin(2p+2,2p),$
we allow \(p\geq 1\).

We now state the main result. The notation used in the theorem is introduced
precisely in later sections. Briefly,
\[
\calR_{\rho/2}(\tu G)_\chi
\]
denotes the explicitly constructed set of genuine small representations with
infinitesimal character $\rho/2$ and genuine central character $\chi$, and
\[
\LLift(\bbC)_\chi
=
\{\tu\pi\in\LLift(\bbC)\mid \tu\pi
\text{ has central character } \chi\}.
\]

\begin{thm} \label{t:main1}\textup{(See Theorem~\ref{t:main})}
Using the notation in Definition \ref{d:list}, and fixing
$\chi\in \widehat Z_{\rho/2}(\tu G)$, the set $\LLift(\bbC)_\chi$ is described
as follows.
\begin{itemize}
    \item[(a)] If $\tu G=\tu{\Spin}(2p+1,2p-1)$, then
    \[
    \LLift(\bbC)_\chi
    =\begin{cases}
        \emptyset, & \ \chi =\chi_1\\
    \{J(\{2p\}^*)\}, & \chi=\chi_2.
    \end{cases}
    \]

    \item[(b)] If $\tu G=\tu{\Spin}(2p+2,2p)$, then
    \[
    \LLift(\bbC)_\chi
    =
    \calR_{\rho/2}(\tu G)_\chi.
    \]

    \item[(c)] If $\tu G=\tu{\Spin}(2p+3,2p-1)$, then
    \[
    \LLift(\bbC)_\chi
    =
    \emptyset.
    \]
\end{itemize}
Here the representations appearing on the right-hand side are understood to
have central character $\chi$.

Consequently,
\[
\LLift(\bbC)
=
\prod\nolimits_{\rho/2}^{s}(\widetilde G)
\]
if and only if
\(\widetilde G=\widetilde{\Spin}(2p+2,2p)\).
For \(\widetilde{\Spin}(2p+1,2p-1)\), the lift exhausts the
\(\chi_2\)-block but vanishes on the \(\chi_1\)-block, whereas for
\(\widetilde{\Spin}(2p+3,2p-1)\) the lift is empty.

\end{thm}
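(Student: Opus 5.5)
We will compute $\Lift_G^{\tu G}(\bbC)$ directly from the Adams--Herb character formula. The lift is characterized by its values on the genuine part of each Cartan subgroup: the character of $\Lift_G^{\tu G}(\pi)$ at a strongly regular $\tu g$ is a signed sum of $\Theta_\pi(g)$ over the preimages $g$ with $\tu g^{\,2}$ covering $g$ (the Kazhdan--Patterson square map), normalized by the ratio of Weyl denominators. For $\pi=\bbC$ we have $\Theta_\pi\equiv 1$. So the first step is to write the character of $\Lift_G^{\tu G}(\bbC)$ on each Cartan of $\tu G$ as an explicit function: a Weyl-denominator ratio times a count over the fibers of the square map. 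This will show that the lift is a combination of standard modules with infinitesimal character $\rho/2$. We then decompose the lift according to the genuine central characters $\chi\in\widehat Z_{\rho/2}(\tu G)$, by projecting onto $\chi$-isotypic parts using the action of $Z(\tu G)$ on each fiber.

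Second, we use the inclusion $\LLift(\bbC)\subseteq\prod\nolimits^s_{\rho/2}(\tu G)$ stated in the introduction, together with the explicit list $\calR_{\rho/2}(\tu G)_\chi$ from Definition~\ref{d:list}. Writing $\Lift_G^{\tu G}(\bbC)_\chi=\sum a_{\tu\pi}\tu\pi$ with $\tu\pi\in\calR_{\rho/2}(\tu G)_\chi$, it suffices to find the coefficients $a_{\tu\pi}$. For this we express each $\tu\pi$ in terms of standard modules via its Langlands parameter; the small members have short character formulas, obtainable by Kazhdan--Lusztig--Vogan computations or coherent continuation from a few standard modules. We then compare characters on a convenient family of Cartans, most naturally the most split Cartan and its Cayley transforms. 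The lift is supported on Cartans whose real points meet the image of the square map with nonzero denominator ratio, so comparing leading coefficients on the maximally split Cartan should determine the $a_{\tu\pi}$ uniquely, by triangularity.

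Third, we run this case by case.
\begin{itemize}
\item[(b)] For $\Spin(2p+2,2p)$ (equal rank), we expect every small representation in $\calR_{\rho/2}(\tu G)_\chi$ to appear with coefficient $\pm1$, in analogy with the split case of \cite{Ts23}. This should follow by repeating the argument there on the quasi-split Cartan.
\item[(a), (c)] For the unequal-rank families, the key point is a cancellation. On the relevant Cartans, the fiber of the square map over a genuine element acquires a sign coming from the nonsplit compact torus factor. Summed over the fiber, this sign gives zero on the $\chi_1$-block of $\Spin(2p+1,2p-1)$ and on every block of $\Spin(2p+3,2p-1)$. On the $\chi_2$-block only the term matching $J(\{2p\}^*)$ survives.
\end{itemize}

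The final statement about exhausting $\prod\nolimits^s_{\rho/2}(\tu G)$ then follows by comparing with $\prod\nolimits^s_{\rho/2}(\tu G)=\bigsqcup_\chi\calR_{\rho/2}(\tu G)_\chi$, using the nonemptiness of the $\chi_1$-block in (a) and of the list in (c).

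The main obstacle is the second and third steps in the unequal-rank cases. There we must evaluate the sign bookkeeping of the Adams--Herb transfer factors on nonsplit Cartans of $\tu G$ precisely enough to see exact cancellation. We must also rule out cancellation between distinct $\tu\pi$, which requires knowing the character formulas of all members of $\calR_{\rho/2}(\tu G)_\chi$. If that becomes unwieldy, we would try instead to compute the lift on the fundamental Cartan alone, where genuine discrete-series-type leading terms detect each $\tu\pi$ separately, and check vanishing there.
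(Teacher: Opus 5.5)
Your outline never actually determines the coefficients $a_{\tu\pi}$, and the two mechanisms you propose for doing so would not work as stated.

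\emph{Triangularity.} In your second step it runs the wrong way. The character of a standard module attached to a Cartan $H$ is supported on Cartans at least as split as $H$. So the maximally split Cartan is exactly where standard modules from every Cartan can contribute, and restricting there gives no triangular system. The triangularity actually available is in the Grothendieck group, via the Bruhat order: $m(\tu\delta,\tu\gamma)\neq0$ forces $\tu\gamma\geq\tu\delta$. Your fallback to the fundamental Cartan does not rescue this. $\Phi_1$ in (a) and the unique small representation in (c) are attached to Cartans with $p$-dimensional split part (of type $(\bbC^\times)^p$ and $S^1\times(\bbC^\times)^p$), not to the fundamental Cartan. So there is no discrete-series-type leading term there to detect them.

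\emph{Cancellation.} More seriously, the cancellation you assert for (a) and (c) is a restatement of the conclusion, not an argument: you say the fiber sum of transfer factors vanishes because of a sign from the compact torus, but offer no evaluation of $\Delta(h,\tu g)$. It also cannot discriminate the cases as described. Compact toral factors occur in the relevant Cartans of all three families (e.g.\ $S^1\times(\bbC^\times)^p$ in (b)), and in (a) the $\chi_1$- and $\chi_2$-blocks live on the same Cartans. Yet the lift survives in (b) and on the $\chi_2$-block of (a).

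\emph{The missing idea.} The paper reduces everything to Grothendieck-group bookkeeping and never touches transfer factors.
\begin{itemize}
\item After the reduction to $\calR_{\rho/2}(\tu G)_\chi$, which you have, one writes $\bbC=J(\gamma_0)=\sum_\gamma(-1)^{\ell(\gamma_0)-\ell(\gamma)}I^{\mathrm{st}}_G(\gamma)$.
\item One applies Adams--Herb in the form $\Lift_G^{\tu G}(I^{\mathrm{st}}_G(\gamma))=C(H_\gamma)\sum_i I_{\tu G}(\tu\gamma_i)$ (Theorem~\ref{t:lift-stable-standard}).
\item One reads off the coefficient of $J(\tu\delta)$ through the multiplicities $m(\tu\delta,\tu\gamma)$.
\end{itemize}
For the most split members $J(\{2p\}^*)$ and $J(\{2p+1\}^*)$ this coefficient is $1$ by maximality of length.

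For $\tu\delta=c_S(\tu\gamma_\sharp)$ the key input is Lemma~\ref{l:comp-factor}. It says $J(\tu\delta)$ occurs, with multiplicity one, exactly in the $I(c_{S'}(\tu\gamma_\sharp))$ with $S'\subseteq S$. These arise from the Cayley transforms $c_{S'}$ of a single linear parameter on the most split Cartan, with signs $\pm(-1)^{|S'|}$. Hence the coefficient is $\pm\sum_{S'\subseteq S}(-1)^{|S'|}C(H_{S'})$.
\begin{itemize}
\item In (a) on the $\chi_1$-block, and in (c), every $C(H_{S'})$ equals $1$, giving $(1-1)^{p-1}=0$.
\item In (b) the constant jumps to $C(H_{S_2})=C(H_{S_3})=2$ on the most compact Cartan of the chain, $S^1\times(\bbC^\times)^p$, and the coefficient becomes $(-1)^p\neq0$.
\end{itemize}
Had that constant been $1$, (b) would vanish too. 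So ``analogy with the split case'' cannot settle (b).

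The dichotomy you attribute to signs on the fibers of the square map is carried, in the paper, by the lifting constant $C(H)=c(H)/c(H_s)$ on one Cartan, together with the nonlinear multiplicity computation. Neither ingredient appears in your plan.
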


We briefly describe the structure of the paper. In Section 2, we recall the
necessary notation and structure theory, and we introduce the parametrization of
regular characters used throughout the paper. In Section 3, we construct
explicit genuine small representations in
$\prod\nolimits_{\rho/2}^s(\tu G)$
by computing their $\tau$-invariants. In Section 4, we use coherent
continuation to count the number of representations in
$\prod\nolimits_{\rho/2}^s(\tu G)$
and show that the representations constructed in Section 3 exhaust the set.

In Section 5, we study the lift $\Lift_G^{\tu G}(\bbC)$ and prove Theorem
\ref{t:main1}, the main theorem in this paper. The small representations constructed in the previous sections
are precisely the possible constituents of the lift, but in contrast with the
split case, the lift need not exhaust
$\prod\nolimits_{\rho/2}^s(\tu G).$
After establishing the general lifting theorem, we give a detailed
calculation for \(\tu{\Spin}(5,3)\) in Section 6, which illustrates the main features
of the lifting argument in the first substantial low-rank case.  In the
final section, following \cite{BTs18}, we record further properties of the resulting
representations, including their  \(\tu K\)-types, real associated
varieties, and unitarity.

\section{Structure Theory}

\subsection{Preliminaries}

We recall some notation for regular characters, cross actions, Cayley transforms, and genuine central characters. For the general definitions, we refer the reader to \cite[Section 2]{Ts23} and the references therein.

Let $G$ be a real form of a connected, simply connected, semisimple complex Lie group $G_{\mathbb C}$ of simply laced type, and let $\widetilde G$ be the nonlinear double cover of $G$. We identify the kernel of the covering map $p:\widetilde G\to G$ with $\{\pm 1\}$. If $H$ is a subgroup of $G$, we write $\widetilde H=p^{-1}(H)$.

Let $\mathfrak g_0$ be the Lie algebra of $G$, and let $\mathfrak g$ be its complexification. Fix a Cartan involution $\theta$. Let $H$ be a $\theta$-stable Cartan subgroup of $G$, and let $\mathfrak h$ be the complexified Lie algebra of $H$. For a regular infinitesimal character $\lambda\in \mathfrak h^*$, we write $\Delta^+(\lambda)$ for the positive root system of $\Delta(\mathfrak g,\mathfrak h)$ making $\lambda$ dominant.

The integral root system defined by $\lambda$ is
\begin{equation*}
R(\lambda)=\{\alpha\in\Delta(\mathfrak g,\mathfrak h)\mid
\langle \lambda,\alpha^\vee\rangle\in\mathbb Z\}.
\end{equation*}
We write
\begin{equation*}
R^+(\lambda)=R(\lambda)\cap \Delta^+(\lambda),
\qquad
W(\lambda)=W(R(\lambda))
\end{equation*}
for the positive integral roots and the integral Weyl group, respectively.
\medskip

Let $\mathcal{HC}(\fk g, K)$ be the set of Harish-Chandra modules and let  $\cal{HC}(\fk g, K)_\lambda \subset \cal{HC}(\fk g, K)$ be the set of Harish-Chandra modules 
with infinitesimal character $\lambda$.  The set of equivalence classes of irreducible admissible representations of $G$, denoted $\widehat G _{\adm}$, can be regarded as 
a subset of $\cal{HC}(\fk g, K)$ by sending an irreducible admissible representation to its space of $K$-finite vectors. Similarly, $\widehat G_{\adm,\lambda}$ is denoted the set 
of  equivalence classes of irreducible admissible representations of $G$ with infinitesimal character $\lambda$.  The same notions will also be used for the nonlinear group $\tu G$. An
irreducible representation $\pi$ of $\tu G$ is \emph{genuine} if $\pi(-\tu g)=-\pi(\tu g)$ for all $\tu g\in \tu G$.

Every $\pi\in \widehat G_{\adm,\lambda}$ is  specified by a parameter, which is called a $\lambda$-regular character,  $\gamma=(H,\Gamma,\overline{ \gamma})$, where $H$ is a $\theta$-stable Cartan subgroup of $G$, $\Gamma$ is a character of $H$, and $\overline{\gamma}$ is an element in $\mathfrak{h}^{\ast}$ which defines the same infinitesimal character as $\lambda$, and there are  certain compatibility conditions between $ \overline{\gamma}$  and $\Gamma$ (see Definition 5.3 in \cite{AT12}). Write $H=TA$, where $T=H^{\theta}$ and $A$ is the identity component of $\{ h\in H \mid \theta (h)=h^{-1}\}$. Let $M=\operatorname{Cent}_G(A)$. The conditions on $\gamma$ imply that there is a unique relative discrete series representation of $M$, denoted by $\sigma_M$,
with Harish-Chandra parameter $\overline\gamma$, whose lowest $M\cap K$-type has $\Gamma$ as a highest weight. Then we define a parabolic subgroup $P=MN$ such that $\pi=J(\gamma)$, the unique irreducible quotient of a standard representation $I(\gamma)= \Ind _P ^G (\sigma _M \otimes 1)$, which is parametrized by $\gamma$ from a $K$-conjugacy class of regular characters for $\lambda$.

A genuine $\lambda$-regular character of $\widetilde G$ is a triple
\begin{equation*}
\tu\gamma=(\widetilde H,\Gamma,\overline\gamma),
\end{equation*}
where $\widetilde H$ is a Cartan subgroup of $\widetilde G$, 
 $\Gamma$ is an irreducible genuine representation of $\tu H$
(equivalently, it may be specified by its genuine central character
on $Z(\tu H)$), and $\overline\gamma\in\mathfrak h^*$ defines the infinitesimal character $\lambda$, subject to the usual compatibility conditions. We denote by $I(\tu\gamma)$ the corresponding standard representation, and by $J(\tu \gamma)$ its unique irreducible quotient.

\medskip

Recall that, when $\lambda$ is regular, the irreducible objects in
$\mathcal{HC}(\fk g,K)_\lambda$ are parametrized by the set
$\mathcal P_\lambda$ of $K$-conjugacy classes of $\lambda$-regular characters.
For $\gamma\in\mathcal P_\lambda$, we write
$I(\gamma)$
for the corresponding standard module, and
$J(\gamma)$
for its unique irreducible quotient.

The classes
\[
\{[I(\gamma)]\}_{\gamma\in\mathcal P_\lambda}
\qquad\text{and}\qquad
\{[J(\gamma)]\}_{\gamma\in\mathcal P_\lambda}
\]
are both bases of the Grothendieck group of
$\mathcal{HC}(\fk g,K)_\lambda$.

\begin{definition}\label{KLVpoly}
For $\gamma,\delta\in\mathcal P_\lambda$, define integers
$M(\gamma,\delta)$ by the character formula
\begin{equation*}
[J(\delta)]
=
\sum_{\gamma\in\mathcal P_\lambda}
M(\gamma,\delta)[I(\gamma)].
\end{equation*}
%We call $M(\gamma,\delta)$ the Kazhdan--Lusztig--Vogan coefficient.

Let $m(\gamma,\delta)$ be the entries of the inverse change-of-basis matrix,
defined by
\begin{equation*}
[I(\delta)]
=
\sum_{\gamma\in\mathcal P_\lambda}
m(\gamma,\delta)[J(\gamma)].
\end{equation*}
Thus
\begin{equation*}
m(\gamma,\delta)=[I(\delta):J(\gamma)]
\end{equation*}
is the composition multiplicity of $J(\gamma)$ in $I(\delta)$.

When $G$ is linear, the coefficients $M(\gamma,\delta)$ are computed by the
Kazhdan--Lusztig--Vogan algorithm (see \cite{V83a}). 
For nonlinear covers, the same change-of-basis notation will be used in the
Grothendieck group of a genuine block (see \cite{RT00} and \cite{RT05}).
\end{definition}

We define the length of a regular character $\gamma$.

\begin{definition}\label{d:length}
Let $\gamma=(H,\Gamma,\ovl\gamma)$ be a regular character, and write
\[
\fk h_{\bbR}=\fk t_{\bbR}\oplus \fk a_{\bbR}
\]
for the Cartan decomposition of the real Lie algebra of $H$. Let
$\theta_\gamma$ denote the induced involution on
$\Delta(\fk g,\fk h)$. The length of $\gamma$ is
\begin{equation*}
\ell(\gamma)
=
\frac{1}{2}
\left|
\{\alpha\in\Delta^+(\ovl\gamma)
\mid
\theta_\gamma(\alpha)\notin\Delta^+(\ovl\gamma)\}
\right|
+
\frac{1}{2}\dim\fk a_{\bbR}.
\end{equation*}
\end{definition}

The definitions  above apply to $\tu G$. 
\medskip

We shall also use the following facts repeatedly.

\begin{prop}\label{prop:genuine-character-determined}
Assume that $G$ is a real form of a connected, simply connected, semisimple complex Lie group $G_{\mathbb C}$, and that $\widetilde G$ is the nonlinear double cover of $G$. In addition, suppose that $G$ is simply laced. Let $H$ be a Cartan subgroup of $G$, and let $H^0$ be the identity component of $H$. Then the following hold.
\begin{itemize}
\item[(1)] \textup{(\cite[Proposition 4.7]{AHe10})} We have
\begin{equation*}
Z(\widetilde H)=Z(\widetilde G)\widetilde H^0.
\end{equation*}
In particular, a genuine character of $Z(\widetilde H)$ is determined by its restriction to $Z(\widetilde G)$ and its differential.

\item[(2)] \textup{(\cite[Proposition 5.5]{AT12})}
A genuine regular character
$\tu\gamma=(\widetilde H,\Gamma,\overline\gamma)$
of $\widetilde G$ is determined by $\overline\gamma$ and the restriction of $\Gamma$ to $Z(\widetilde G)$. Hence the irreducible representation $J(\tu\gamma)$ is also determined by these data.
\end{itemize}
\end{prop}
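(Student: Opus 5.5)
The plan is to prove both parts by analyzing the commutator structure of $\widetilde H$, which is a two-step nilpotent group, abelian modulo the kernel $\{\pm1\}$ of $p$. Write $H=TA$. Since $A$ is connected, $H^0\supseteq A$. Because $G_{\bbC}$ is simply connected, the component group $H/H^0$ is generated by the elements $m_\alpha=\exp(i\pi\alpha^\vee)$ attached to the real roots $\alpha$ of $H$. This is the standard description of $T/T^0$ for simply connected groups, which I will take from \cite{AT12}.

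\textbf{Step 1.} Conjugation by any $\tilde h\in\widetilde H$ fixes $H^0$ pointwise. Hence the commutator map $\widetilde H^0\to\{\pm1\}$, $\tilde x\mapsto \tilde h\tilde x\tilde h^{-1}\tilde x^{-1}$, is continuous on a connected group, so it is trivial. It follows that $\widetilde H^0\subseteq Z(\widetilde H)$, and that centrality of an element of $\widetilde H$ only needs to be tested against lifts $\tilde m_\beta$ of the generators $m_\beta$.

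\textbf{Step 2.} Compute the commutator of lifts: $[\tilde m_\alpha,\tilde m_\beta]=(-1)^{\langle\alpha,\beta^\vee\rangle}$ for real roots $\alpha,\beta$. The simply laced hypothesis enters here. All roots have the same length, so the nonlinear cover restricts to the nontrivial cover of every root $\SL(2,\bbR)$, and the sign is governed exactly by the parity of the Cartan integer. An element $\tilde x$ lifting $x=\prod m_\alpha^{\epsilon_\alpha}$ is then central in $\widetilde H$ if and only if $\mu=\sum\epsilon_\alpha\alpha^\vee$ satisfies $\langle\beta,\mu\rangle\in 2\bbZ$ for every real root $\beta$.

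\textbf{Step 3.} Show that this evenness forces $x=\exp(i\pi\mu)$ to lie in $Z(G)H^0$. The evenness says that $\mu/2$ pairs integrally with the real roots. Combining this with the fact that $\exp(i\pi\mu)$ automatically acts trivially on root vectors of imaginary and complex roots modulo $H^0$-adjustments, I would split $\exp(i\pi\mu)=z\cdot h_0$ with $z\in Z(G)$ and $h_0\in H^0$. Lifting this decomposition to $\widetilde H$ then gives $Z(\widetilde H)\subseteq Z(\widetilde G)\widetilde H^0$. The reverse inclusion is immediate from Step 1. I expect this to be the main obstacle: it needs a careful root-datum argument, possibly reducing to the real-root subsystem and using that its coroot lattice mod 2 detects the center.

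The consequence in (1) then follows. A genuine character of $Z(\widetilde H)=Z(\widetilde G)\widetilde H^0$ is determined by its values on $Z(\widetilde G)$ and on the connected group $\widetilde H^0$, and the latter values are determined by its differential.

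For (2), the argument proceeds as follows.

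\textbf{Step 4.} The compatibility conditions on a regular character fix the differential of $\Gamma$ in terms of $\overline\gamma$ (as $\overline\gamma$ plus the standard $\rho$-shifts). By (1), the central character of $\Gamma$ on $Z(\widetilde H)$ is therefore determined by $\overline\gamma$ and $\Gamma|_{Z(\widetilde G)}$.

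\textbf{Step 5.} $\widetilde H/Z(\widetilde H)$ is a finite abelian group on which the commutator pairing is nondegenerate, by the definition of the center. By the Stone--von Neumann type uniqueness for such Heisenberg-like groups, there is exactly one irreducible genuine representation of $\widetilde H$ with a given genuine central character. Hence $\Gamma$, and so $\tilde\gamma$ and $J(\tilde\gamma)$, are determined by $\overline\gamma$ and $\Gamma|_{Z(\widetilde G)}$.
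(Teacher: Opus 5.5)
\paragraph{Verdict: genuine gap at Step 3.} The paper gives no argument for this proposition: both parts are quoted from \cite{AHe10} and \cite{AT12}. Your Steps 1, 2, 4 and 5 are sound:
\begin{itemize}
\item commutators are $\{\pm1\}$-valued and bimultiplicative;
\item $[\tilde m_\alpha,\tilde m_\beta]=(-1)^{\langle\alpha,\beta^\vee\rangle}$ in the simply laced case;
\item $d\Gamma=\overline\gamma+\rho_i-2\rho_{i,c}$;
\item Stone--von Neumann uniqueness holds for $\widetilde H$ once $Z(\widetilde H)$ is known.
\end{itemize}
However, the entire content of (1) sits in Step 3, and there you give no argument. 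Saying that $\exp(i\pi\mu)$ ``acts trivially on imaginary and complex root vectors modulo $H^0$-adjustments'' only restates the claim.

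\paragraph{Why this is not automatic.} The element $x=\mu(-1)$ acts on $\mathfrak g_\gamma$ by $(-1)^{\langle\gamma,\mu\rangle}$. For imaginary $\gamma$ this is $1$, since $\theta\mu=-\mu$. For complex $\gamma$ it can be $-1$. This already happens for $\mathrm{SU}(2,2)$ with $\theta=s_{e_1-e_4}s_{e_2-e_3}$, $\mu=e_1-e_4$ and $\gamma=e_1-e_2$. So you must produce a single $h\in H^0$ with $\gamma(h)=(-1)^{\langle\gamma,\mu\rangle}$ for every root $\gamma$ simultaneously. Since $|\gamma(h)|=1$ for all roots forces $h\in T^0$, write $h=\exp(i\pi Z)$ with $Z\in\mathfrak h_{\mathbb R}^{\theta}$. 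The requirement becomes $\mu-Z\in 2P^\vee$, or equivalently (apply $\theta$ and subtract) $\mu\in(1-\theta)P^\vee$. Nothing in Steps 1--2 yields this lattice statement.

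\paragraph{How to close it.} A duality argument suffices, using only the generation fact you already quote.
\begin{itemize}
\item Normalize so that $Q^\vee=Q$ and $P^\vee=P$. Then $H=H^0\langle m_\alpha\rangle$ is the lattice identity $Q^{-\theta}=(1-\theta)Q+\mathbb Z\Delta_r$.
\item Let $\pi^-$ be orthogonal projection onto $V^-=\mathfrak h_{\mathbb R}^{-\theta}$. Then $\pi^-(v)=(v-\theta v)/2$. Since $P$ is dual to $Q$, $\pi^-(P)$ is the dual of $Q\cap V^-$ inside $V^-$.
\item Put $\nu=\mu/2$. For $q\in Q$ we have $\langle(1-\theta)q,\nu\rangle=\langle q,\mu\rangle\in\mathbb Z$. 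For real $\beta$, $\langle\beta,\nu\rangle\in\mathbb Z$ by exactly your evenness condition.
\item Hence $\nu=\pi^-(v)$ for some $v\in P$, i.e.\ $\mu=(1-\theta)v$.
\item Take $Z=-(1+\theta)v\in\mathfrak h_{\mathbb R}^\theta$. Then $x=\exp(2\pi i v)\exp(i\pi Z)\in Z(G)T^0$, because $\exp(2\pi i v)\in Z(G_{\mathbb C})\cap G=Z(G)$.
\end{itemize}
Finally, ``lifting the decomposition'' also needs $Z(\widetilde G)=p^{-1}(Z(G))$. This holds because $\widetilde G$ is connected, by the same continuity argument as in your Step 1.
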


Let $\chi$ be a genuine central character of $\widetilde G$. We denote by $\mathcal B_{\lambda,\chi}$ the set of equivalence classes of genuine $\lambda$-regular characters with central character $\chi$. Thus, if $\tu\gamma=(\widetilde H,\Gamma,\overline\gamma)$, then $\tu\gamma\in B_{\lambda,\chi}$ means that
\begin{equation*}
\Gamma|_{Z(\widetilde G)}=\chi.
\end{equation*}
By Proposition \ref{prop:genuine-character-determined}, once $\lambda$ and $\chi$ are fixed, a genuine regular character is determined by the Cartan subgroup $\widetilde H$ and the element $\overline\gamma$.

We will use the cross action of the Weyl group on regular characters. If $\tu\gamma\in B_{\lambda,\chi}$ and $w\in W(\lambda)$, then
\begin{equation*}
w\times \tu\gamma\in B_{\lambda,\chi}.
\end{equation*}
Indeed, the cross action by $W(\lambda)$ preserves the restriction of the character to $Z(\widetilde G)$.

We will also use Cayley transforms and inverse Cayley transforms. Suppose that $\alpha$ is a noncompact imaginary root for $\tu\gamma$. Then $c^\alpha(\tu\gamma)$ denotes the Cayley transform of $\tu\gamma$ by $\alpha$. For simplicity, $c^{\alpha}(\tu\gamma)$ is also denoted $\tu\gamma^\alpha$.  Suppose instead that $\alpha$ is a real root for $\tu\gamma$. Then $c_\alpha(\tu\gamma)$ denotes the inverse Cayley transform of $\tu \gamma$ by $\alpha$. (Similarly, we will also use $\tu\gamma_\alpha$ to denote $c_\alpha(\tu\gamma)$.) 
If $S=\{\alpha_1,\ldots,\alpha_l\}$ is a set of strongly orthogonal roots for which the corresponding transforms are defined, we write
\begin{equation*}
c^S(\tu\gamma)=c^{\alpha_1}\cdots c^{\alpha_l}(\tu\gamma),
\qquad
c_S(\tu\gamma)=c_{\alpha_1}\cdots c_{\alpha_l}(\tu\gamma).
\end{equation*}
The Cayley transforms used below preserve the fixed genuine central character. Therefore, once $\chi$ is fixed, all parameters obtained from a seed parameter in $B_{\lambda,\chi}$ by the cross action or by the Cayley transforms considered in this paper remain in $B_{\lambda,\chi}$.

In the rest of this paper, we take $\lambda=\rho/2$.

\subsection{Genuine central characters}
\label{s:central-char}
In this subsection, we first summarize the facts on the genuine central characters of the groups $\widetilde{G}=\tu\Spin(c,d)$ with $c+d=2n$, which have the maximal compact subgroup $\tu K = \Spin(c)\times \Spin (d)$. The center $Z(\widetilde{G})$ is computed in \cite{BTs18}. We recall this as follows.

\begin{lemma} \textup{(\cite[Lemma 6.1]{BTs18})} \label{l:center}
Consider $\widetilde{G}=\tu\Spin(c,d)$ with $c+d=2n$. 
\begin{itemize}
    \item[(1)] When $c=2p$, $d=2q$, $Z(\tu G) \cong
    \begin{cases}
\bbZ_2\times\bbZ_4 & \text{ if at least one of $p$ and $q$ is odd},\\
\bbZ_2\times \bbZ_2\times \bbZ_2 &\text{ otherwise}.
    \end{cases}$
    \item[(2)] When $c=2p+1$, $d=2q+1$, $Z(\tu G)\cong \bbZ_2\times \bbZ_2$. 
\end{itemize}

\end{lemma}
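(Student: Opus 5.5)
The plan is to compute the center directly inside the Clifford-algebra model of $\Spin(c,d)$ and then identify its preimage in the double cover. Recall that $\tu G$ is the nonlinear double cover of the simply connected real form $G=\Spin(c,d)$. Since $c,d\geq 2$ in the cases of interest, $K=(\Spin(c)\times\Spin(d))/\{\pm(1,1)\}$ has $\pi_1(K)\cong\bbZ_2\times\bbZ_2$ (or $\bbZ_2$ when one factor is small). The nonlinear cover corresponds to the diagonal extension, and it yields $\tu K=\Spin(c)\times\Spin(d)$, as stated. Because $Z(\tu G)$ centralizes $\tu K$ and $\tu G=\tu K\exp(\fk p_0)$, one has $Z(\tu G)=Z(\tu K)\cap Z_{\tu G}(\fk p_0)$. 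So I would compute the elements $(k_1,k_2)\in Z(\Spin(c))\times Z(\Spin(d))$ that act trivially by $\mathrm{Ad}$ on $\fk p_0\cong\bbR^c\otimes\bbR^d$.

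First I list $Z(\Spin(m))$. For $m$ odd it is $\{\pm1\}$. For $m=2r$ it is $\{\pm1,\pm\omega_m\}$, where $\omega_m=e_1\cdots e_m$ is the volume element. One has $\omega_m^2=(-1)^{r}$ (in the compact normalization), so this group is $\bbZ_4$ if $r$ is odd and $\bbZ_2\times\bbZ_2$ if $r$ is even. The element $\omega_m$ acts on $\bbR^m$ by $-1$ and $-1\in\Spin(m)$ acts by $+1$. Hence $(k_1,k_2)$ acts on $\bbR^c\otimes\bbR^d$ by the product of the two signs, and the condition to lie in $Z(\tu G)$ is that the two signs agree.

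For case (2), with $c,d$ odd, only $\pm1$ occur in each factor, so all four elements $(\pm1,\pm1)$ act trivially. This gives $Z(\tu G)\cong\bbZ_2\times\bbZ_2$.

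For case (1), the subgroup is $\{(k_1,k_2): \mathrm{sign}(k_1)=\mathrm{sign}(k_2)\}$ inside $Z(\Spin(2p))\times Z(\Spin(2q))$. It has order $8$ and is generated by $(-1,1)$, $(1,-1)$ and $(\omega_{2p},\omega_{2q})$. The element $(\omega_{2p},\omega_{2q})$ squares to $((-1)^p,(-1)^q)$. If $p$ and $q$ are both even, this square is trivial, every element has order at most $2$, and the group is $\bbZ_2^3$. If at least one of $p,q$ is odd, the square is a nontrivial element of $\langle(-1,1),(1,-1)\rangle$, so $(\omega_{2p},\omega_{2q})$ has order $4$. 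Choosing a complementary element of order $2$ in $\langle(-1,1),(1,-1)\rangle$ not equal to the square then gives $\bbZ_2\times\bbZ_4$.

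The main obstacle is justifying that the centralizer computation in $\tu G$ reduces to the linear action on $\fk p_0$. Covering elements could a priori commute with $\tu K$ and $\fk p_0$ only up to the kernel. However, elements in the preimage of $Z(G)$ that are central in $\tu K$ and act trivially on $\fk g_0$ commute with the connected group $\tu G$, because $\tu G$ is generated by $\tu K$ and $\exp\fk p_0$. I would also check that the nonlinear cover is the diagonal one, via \cite[Proposition 3.6]{AT12}, so that $\tu K$ is indeed $\Spin(c)\times\Spin(d)$.
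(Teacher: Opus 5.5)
The paper does not prove this lemma: it quotes it from \cite[Lemma 6.1]{BTs18} without argument, so there is no internal proof to compare with, and your argument supplies a correct self-contained derivation in its place. The core computation is right. You reduce to $\{z\in Z(\Spin(c))\times Z(\Spin(d)) : \mathrm{Ad}(z)|_{\fk p_0}=1\}$. You observe that $(k_1,k_2)$ acts on $\fk p_0\cong\bbR^c\otimes\bbR^d$ by the product of the scalars by which $k_1$ and $k_2$ act on $\bbR^c$ and $\bbR^d$. Using $(\omega_{2p},\omega_{2q})^2=((-1)^p,(-1)^q)$ then correctly decides between $\bbZ_2^3$ and $\bbZ_2\times\bbZ_4$, and the odd--odd case gives $\{\pm1\}\times\{\pm1\}$. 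As a consistency check, since $\tu G$ is connected, $Z(\tu G)$ is the full preimage of $Z(G)$, so the orders $8$ and $4$ are forced by $|Z(\Spin(c,d))|=4$ or $2$.

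Three points need tidying.

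(i) Your equality $Z(\tu G)=Z(\tu K)\cap Z_{\tu G}(\fk p_0)$ presupposes $Z(\tu G)\subseteq\tu K$. Your ``generated by $\tu K$ and $\exp\fk p_0$'' argument only proves the reverse inclusion. The missing inclusion is standard: if $z=k\exp X$ is central, then $\mathrm{Ad}(k)e^{\mathrm{ad}X}=1$. Uniqueness of the polar decomposition in $\mathrm{Ad}(\fk g_0)$ then forces $e^{\mathrm{ad}X}=1$, hence $\mathrm{ad}X=0$ and $X=0$.

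(ii) Your remark on $\pi_1(K)$ is wrong. For $c,d\geq 3$ the group $\Spin(c)\times\Spin(d)$ is simply connected, so $\pi_1(K)\cong\bbZ_2$; it is $\mathrm{SO}(c)\times\mathrm{SO}(d)$ whose fundamental group is $\bbZ_2\times\bbZ_2$. For $d=2$ one gets $\pi_1(K)\cong\bbZ$. This does no damage, since all you actually use is $\tu K=\Spin(c)\times\Spin(d)$, which the paper states and which is the unique connected double cover of $K$.

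(iii) The paper allows $\tu\Spin(4,2)$, the case $p=1$ of $\tu\Spin(2p+2,2p)$, where a factor $\Spin(2)$ occurs. There $Z(\Spin(2))$ is the whole circle, not $\{\pm1,\pm\omega_2\}$. Your method still works: requiring the $\Spin(2)$-component to act on $\bbR^2$ by the scalar $\pm1$ cuts the circle down to exactly $\{\pm1,\pm\omega_2\}$. So the count and the conclusion $\bbZ_2\times\bbZ_4$ are unchanged.
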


\begin{definition}
Let $\widehat Z_{\mathrm{gen}}(\widetilde G)$ denote the set of genuine characters of $Z(\widetilde G)$. We say that $\chi\in \widehat Z_{\mathrm{gen}}(\widetilde G)$ is $\rho/2$-compatible if $\mathcal B_{\rho/2,\chi}\neq \emptyset$. Equivalently, there exists a genuine $\rho/2$-regular character
\begin{equation*}
\tu\gamma=(\widetilde H,\Gamma,\overline\gamma)
\end{equation*}
such that $\Gamma|_{Z(\widetilde G)}=\chi$.
We denote by $\widehat Z_{\rho/2}(\widetilde G)$ the set of $\rho/2$-compatible genuine central characters.

\end{definition}
\medskip

\begin{remark} \label{r:hw}
For $c=2p$, $d=2q$; or $c=2p+1$, $d=2q+1$, let 
$$
\mu=(a_1,\dots, a_p\mid b_1,\dots, b_q) 
$$
be a $\tu K$-type parametrized by its highest weight, and let $\chi$ be the restriction of 
the highest weight of $\mu$ to $Z(\tu G)$. Then $\chi \in  \widehat Z_{\mathrm{gen}}(\widetilde G)$ iff 
$$
a_i\in \bbZ \text{ and } b_j\in \bbZ+\frac{1}{2}; \text{ or }  
a_i\in \bbZ+\frac{1}{2} \text{ and } b_j\in \bbZ.
$$
\end{remark}

The following lemma records the genuine central characters in the
equal-rank and unequal-rank cases.  It is a reformulation of
\cite[Lemma~6.3]{BTs18}.

\begin{lemma}\textup{(cf. \cite[Lemma~6.3]{BTs18})}
 \label{l:central-char}
Let $\mu_j$ be the $\tu K$-type parametrized by its highest weight:
\begin{align*}
    \mu_1 = (\underbrace{1/2,\dots,1/2}_p \mid \underbrace{0,\dots, 0}_q),\ 
     &\mu_2 = (\underbrace{0,\dots, 0}_p \mid \underbrace{1/2,\dots, 1/2}_q),\\
     \mu_3 = (\underbrace{1/2,\dots,1/2, -1/2}_p \mid \underbrace{0,\dots, 0}_q),\ 
     &\mu_4 = (\underbrace{0,\dots, 0}_p \mid \underbrace{1/2,\dots, 1/2, -1/2}_q)
\end{align*}
Let $\chi_j$ be the restriction of the highest weight of $\mu_j$ to $Z(\tu G)$.
\begin{itemize}
    \item[(1)] If $c=2p$, $d=2q$, then $$\widehat Z_{\mathrm{gen}}(\widetilde G)=\{\chi_1, \chi_2, \chi_3, \chi_4\}.$$ Moreover, given any  $\tu K$-type $\mu$, the central character of $\mu$
is $\chi_j$ iff $\mu-\mu_j \in Q(D_n)$, where $Q(D_n)$ denotes the root lattice of $D_n$.

      \item[(2)] If $c=2p+1$, $d=2q+1$, then $$\widehat Z_{\mathrm{gen}}(\widetilde G)=\{\chi_1, \chi_2\}.$$
  Moreover,  if $\mu=(a_1,\ldots,a_p\mid b_1,\ldots,b_q)$
is a genuine $\tu K$-type, then its central character is $\chi_1$
iff
$a_i\in\bbZ+\frac12,\  b_j\in\bbZ$; 
its central character is $\chi_2$  iff  $a_i\in\bbZ,\  b_j\in\bbZ+\frac12.$
      
\end{itemize}

\end{lemma}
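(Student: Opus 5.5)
The statement to prove is Lemma~\ref{l:central-char}, a reformulation of \cite[Lemma~6.3]{BTs18}. The plan has three steps: count genuine central characters from Lemma~\ref{l:center}, show the characters $\chi_j$ attached to the $\mu_j$ are pairwise distinct, and then identify each central character through the weight lattice modulo the root lattice.

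\emph{Step 1: counting.} I restrict highest weights of $\tu K$-types to $Z(\tu G)$. By Lemma~\ref{l:center}, $|Z(\tu G)|=8$ when $c,d$ are even and $|Z(\tu G)|=4$ when $c,d$ are odd. The subgroup $\{\pm1\}$ is central, so exactly half of the characters of $Z(\tu G)$ are genuine. This gives $4$ genuine characters in case (1) and $2$ in case (2). By Remark~\ref{r:hw}, each $\mu_j$ is a genuine $\tu K$-type, so each $\chi_j$ lies in $\widehat Z_{\mathrm{gen}}(\tu G)$. It therefore suffices to show that the listed $\chi_j$ are pairwise distinct and to identify, for an arbitrary genuine $\mu$, which one it restricts to.

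\emph{Step 2: the criterion in case (1).} Since $Z(\tu G)\subset \tu K$ and $\tu K$ has the same Cartan as $\tu G$ in the equal-rank case, $Z(\tu G)$ lies in the maximal torus $\tu T$ of $\tu K$. The restriction of the highest weight of $\mu$ to $Z(\tu G)$ then depends only on $\mu$ modulo the characters of $\tu T$ that are trivial on $Z(\tu G)$. By the usual duality for connected compact groups (here $\tu K$ is connected, being a quotient of the simply connected cover of the complexified group intersected appropriately), the center of the simply connected group $\Spin(2n,\bbC)$ is dual to $P(D_n)/Q(D_n)$.

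The subtle point is that $Z(\tu G)$ is the preimage of $Z(G)$ together with the covering kernel, not just $Z(G_\bbC)$. My approach is to realize $\tu G$ inside a linear group whose torus has character lattice containing the half-integral weights of Remark~\ref{r:hw}. Then $Z(\tu G)=\tu T\cap Z$ of that larger group, and its character group is $\Lambda/Q(D_n)$, where $\Lambda$ is the lattice of $\tu T$-weights (with integral $\mathfrak t$-coordinates and half-integral coordinates in one block). The index $[\Lambda:Q(D_n)]=8$ matches Step 1.

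The four weights $\mu_j$ are pairwise incongruent mod $Q(D_n)$. Differences such as $\mu_1-\mu_3=e_p$ have odd coordinate sum, and $\mu_1-\mu_2$ has non-integral coordinates. This proves (1).

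\emph{Step 3: case (2).} In case (2) the group is not of equal rank, so I use a maximal torus of $\tu K$ inside a fundamental Cartan. The $\tu K$-weights have $p+q=n-1$ coordinates, and congruence mod $Q(D_n)$ becomes less informative. Instead I argue directly. Both $\mu_1$ and $\mu_2$ are genuine, and they restrict differently on $-1\in\Spin(c)$ versus $-1\in\Spin(d)$. Indeed, the element $z=(-1,1)\in\Spin(2p+1)\times\Spin(2q+1)$ acts by $-1$ exactly when the $\Spin(2p+1)$-part has half-integral weights. Since there are exactly two genuine characters, the parity of the $a_i$ (equivalently the $b_j$) determines the central character. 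This gives (2).

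I expect Step 2 to be the main obstacle: identifying $Z(\tu G)$ with a dual of an explicit weight lattice modulo $Q(D_n)$ requires care about how $Z(\tu G)$ sits in $\tu T$. This is exactly the input taken from \cite[Lemmas~6.1, 6.3]{BTs18}, and I would cite it there rather than recompute.
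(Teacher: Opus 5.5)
The paper does not prove this lemma; it only records it as a reformulation of \cite[Lemma~6.3]{BTs18}. Your Steps~1 and~3 are correct and already go beyond what the paper supplies. In Step~3 you should add why $z=(-1,1)$ lies in $Z(\tu G)$: it centralizes $\tu K$ and acts trivially on $\fk p\cong\bbC^{2p+1}\otimes\bbC^{2q+1}$. Alternatively, $Z(\tu G)\subseteq Z(\tu K)=\{(\pm1,\pm1)\}$ and both groups have order $4$.

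\emph{The problem is Step~2: the mechanism you propose cannot exist.} Since $G_\bbC$ is simply connected, every finite-dimensional representation of $\tu G$ factors through $G$. Hence $\tu G$ admits no injective homomorphism into any linear group, and there is no larger linear group $L$ with $Z(\tu G)=\tu T\cap Z(L)$. Your description of $\Lambda$ is also not a lattice: weights that are integral, or half-integral in exactly one block, are not closed under addition. The correct lattice is $X^*(\tu T)=P(D_p)\times P(D_q)$. It also contains the non-genuine coset $(\bbZ+\frac12)^n$, and its index over $Q(D_n)$ is $4\cdot 2=8$. Without a valid identification of the kernel of $X^*(\tu T)\to\widehat{Z(\tu G)}$, the pairwise incongruence of the $\mu_j$ modulo $Q(D_n)$ does not show that the $\chi_j$ are distinct. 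That distinctness is exactly what the ``only if'' direction of (1) needs.

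\emph{The repair is intrinsic and short.} Since $\tu G$ is connected and $Z(\tu G)\subseteq Z(\tu K)\subseteq\tu T$, an element $z\in\tu T$ is central iff $\operatorname{Ad}(z)=1$. This holds iff $\alpha(z)=1$ for every root $\alpha$ of $\fk g$ with respect to the Lie algebra of $\tu T$, viewed as a character of $\tu T$. Thus $Z(\tu G)$ is the annihilator of $Q(D_n)$ in $\tu T$, and Pontryagin duality gives $\widehat{Z(\tu G)}\cong X^*(\tu T)/Q(D_n)$. Your incongruence check then finishes (1).

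If you prefer to avoid duality, argue as follows:
\begin{enumerate}
\item Roots are trivial on $Z(\tu G)$, so $\mu\equiv\mu_j \pmod{Q(D_n)}$ forces central character $\chi_j$.
\item The genuine weights form the two $\bbZ^n$-cosets $(\bbZ+\frac12)^p\times\bbZ^q$ and $\bbZ^p\times(\bbZ+\frac12)^q$. These split into four $Q(D_n)$-cosets, represented by $\mu_1,\dots,\mu_4$.
\item Every genuine character of $Z(\tu G)$ extends to $\tu T$. It is therefore the central character of the genuine $\tu K$-type whose highest weight is the dominant conjugate of that extension.
\end{enumerate}
Combined with your count of four genuine characters, the $\chi_j$ exhaust $\widehat Z_{\mathrm{gen}}(\tu G)$ and are pairwise distinct, which gives both directions of (1). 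With this fix your argument is a self-contained proof relying only on Lemma~\ref{l:center}, rather than a citation.
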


The following lemma is a slight modification of \cite[Lemma 6.8]{BTs18}. We include a proof for completeness.

\begin{lemma}\label{lem:genuine-infinitesimal-character-pattern}
Let $\widetilde G=\widetilde{\Spin}(c,d)$, with $c+d=2n$.
\begin{enumerate}
\item Suppose $c=2p$ and $d=2q$. Then the infinitesimal character of any genuine discrete series representation of $\widetilde G$ is conjugate to a parameter of the form
\begin{equation*}
(a_1,\ldots,a_p\mid b_1,\ldots,b_q),
\end{equation*}
where either
\begin{equation*}
a_i\in\mathbb Z,\quad b_j\in\mathbb Z+\frac12; \text{ or } \ 
a_i\in\mathbb Z+\frac12,\quad b_j\in\mathbb Z.
\end{equation*}

\item Suppose $c=2p+1$ and $d=2q+1$. Then the infinitesimal character of any genuine fundamental series representation of $\widetilde G$ is conjugate to a parameter of the form
\begin{equation*}
(a_1,\ldots,a_p\mid b_1,\ldots,b_q\mid x),
\end{equation*}
where either
\begin{equation*}
a_i\in\mathbb Z,\quad b_j\in\mathbb Z+\frac12; \text{ or } \ 
a_i\in\mathbb Z+\frac12,\quad b_j\in\mathbb Z.
\end{equation*}
 
If, in addition, the infinitesimal character is half-integral, then
$$x\in\mathbb Z  \ \text{ or }   \ x\in\mathbb Z+\frac12.$$

\end{enumerate}

\begin{proof}
We first consider the equal-rank case $c=2p$ and $d=2q$. Let $\pi$ be a genuine discrete series representation of $\widetilde G$, and let
\begin{equation*}
\lambda=(a_1,\ldots,a_p\mid b_1,\ldots,b_q)
\end{equation*}
be its Harish-Chandra parameter, written with respect to a compact Cartan subgroup.

Since $\pi$ is genuine, every $\widetilde K$-type occurring in $\pi$ is genuine. Let $\mu$ be the lowest $\widetilde K$-type of $\pi$. By the Blattner formula, the highest weight of $\mu$ differs from $\lambda$ by the fixed shift $\rho_n-\rho_c$, where $\rho_c$ is the half-sum of the positive compact roots and $\rho_n$ is the half-sum of the positive noncompact roots. In the present equal-rank case this shift has integral coordinates. Hence $\mu$ and $\lambda$ have the same integrality pattern.

On the other hand, by Remark \ref{r:hw}, a $\widetilde K$-type with highest weight
$(\mu_1,\ldots,\mu_p\mid \nu_1,\ldots,\nu_q)$ is genuine if and only if either
\begin{equation*}
\mu_i\in\mathbb Z,\quad \nu_j\in\mathbb Z+\frac12 ; \ \text{ or } \ 
\mu_i\in\mathbb Z+\frac12,\quad \nu_j\in\mathbb Z
\end{equation*}
for all $i,j$. Therefore the Harish-Chandra parameter $\lambda$ has the same mixed integrality pattern. This proves $(1)$.

Now suppose that $c=2p+1$ and $d=2q+1$. A fundamental series representation of $\widetilde{\Spin}(2p+1,2q+1)$ is induced from a cuspidal parabolic subgroup whose Levi factor has derived group locally isomorphic to $\Spin(2p,2q)$. The infinitesimal character may be written as
\begin{equation*}
(a_1,\ldots,a_p\mid b_1,\ldots,b_q\mid x),
\end{equation*}
where the first two blocks come from the discrete series parameter on this equal-rank Levi factor, and the last coordinate $x$ comes from the split part.

Since the representation is genuine, the inducing discrete series representation on the nonlinear cover of the Levi factor is genuine. Applying the result of $(1)$ to this equal-rank Levi factor, we conclude that either
\begin{equation*}
a_i\in\mathbb Z,\quad b_j\in\mathbb Z+\frac12;  \   \text{ or }
a_i\in\mathbb Z+\frac12,\quad b_j\in\mathbb Z
\end{equation*}
for all $i,j$. The remaining coordinate $x$ is the differential of the character on the split part. If the infinitesimal character is assumed to be half-integral, then necessarily $x\in\mathbb Z$ or $x\in\mathbb Z+\frac12$. This proves $(2)$.
\end{proof}

\end{lemma}

Now we specialize the groups to the ones under consideration and obtain the genuine central characters which are $\rho/2$-compatible.

\begin{prop}\label{prop:centers-compatible-central-characters}
Let $\widetilde G$ be one of the following nonlinear double covers of real spin groups. Then the center $Z(\widetilde G)$ and the number of genuine central characters are as follows:
\begin{equation*}
\begin{array}{c|c|c|c}
\widetilde G
& Z(\widetilde G)
& |\widehat Z_{\mathrm{gen}}(\widetilde G)|
& |\widehat Z_{\rho/2}(\widetilde G)| \\
\hline
\widetilde{\Spin}(2p+1,2p-1)
& \mathbb{Z}_2\times \mathbb{Z}_2
& 2
& 2 \\
\widetilde{\Spin}(2p+2,2p)
& \mathbb{Z}_2\times \mathbb{Z}_4
& 4
& 2 \\
\widetilde{\Spin}(2p+3,2p-1)
& \mathbb{Z}_2\times \mathbb{Z}_2
& 2
& 1
\end{array}
\end{equation*}

In particular, using the notation in Lemma \ref{l:central-char},
\begin{itemize}
    \item[(1)] For $\tu G=\widetilde{\Spin}(2p+1,2p-1)$,  $\widehat Z_{\rho/2}(\widetilde G)=\{\chi_1, \chi_2\}$.
    \item[(2)] For  $\tu G= \widetilde{\Spin}(2p+2,2p)$,
    $\widehat Z_{\rho/2}(\widetilde G)=\{\chi_2, \chi_4\}$.
    \item[(3)]  For $\tu G= \widetilde{\Spin}(2p+3,2p-1)$, 
        $\widehat Z_{\rho/2}(\widetilde G)=\{\chi_2\}$.
\end{itemize}

\begin{proof}
The description of the centers follows directly from Lemma \ref{l:center}. The number of genuine central characters follows from Lemma \ref{l:central-char}. Thus it remains to determine which genuine central characters are $\rho/2$-compatible.

Recall that in type $D_N$, with our choice of positive roots,
\begin{equation*}
\rho/2=
\left(\frac{N-1}{2},\frac{N-2}{2},\ldots,\frac12,0\right).
\end{equation*}
Thus, if $N=2r$, then $\rho/2$ has $r$ integral coordinates and $r$ half-integral coordinates. If $N=2r+1$, then $\rho/2$ has $r+1$ integral coordinates and $r$ half-integral coordinates. Since the Weyl group $W(D_N)$ acts by permutations and an even number of sign changes, the numbers of integral and half-integral coordinates are preserved under $W(D_N)$.

We first consider $\widetilde G=\widetilde{\Spin}(2p+1,2p-1)$. In this case the complex root system is of type $D_{2p}$, and hence $\rho/2$ has $p$ integral coordinates and $p$ half-integral coordinates. A genuine fundamental series parameter is conjugate to one of the form
\begin{equation*}
(a_1,\ldots,a_p\mid b_1,\ldots,b_{p-1}\mid x).
\end{equation*}
By Lemma \ref{lem:genuine-infinitesimal-character-pattern}, the two possible genuine integrality patterns are
\begin{equation*}
a_i\in\mathbb Z,\qquad b_j\in\mathbb Z+\frac12,
\end{equation*}
or
\begin{equation*}
a_i\in\mathbb Z+\frac12,\qquad b_j\in\mathbb Z.
\end{equation*}
The first pattern can be made $W(D_{2p})$-conjugate to $\rho/2$ by taking $x\in\mathbb Z+\frac12$, while the second can be made $W(D_{2p})$-conjugate to $\rho/2$   by taking $x\in\mathbb Z$. These two patterns correspond to the two genuine central characters $\chi_2$ and $\chi_1$, respectively, in the notation of Lemma \ref{l:central-char}. Hence both genuine central characters are $\rho/2$-compatible and we have  $\widehat Z_{\rho/2}(\widetilde G)=\{\chi_1, \chi_2\}$.

Next consider $\widetilde G=\widetilde{\Spin}(2p+2,2p)$. Here the complex root system is of type $D_{2p+1}$, so $\rho/2$ has $p+1$ integral coordinates and $p$ half-integral coordinates. Since $\widetilde G$ has a compact Cartan subgroup, a genuine discrete series parameter is conjugate to one of the form
\begin{equation*}
(a_1,\ldots,a_{p+1}\mid b_1,\ldots,b_p).
\end{equation*}
By Lemma \ref{lem:genuine-infinitesimal-character-pattern}, the possible genuine integrality patterns are
\begin{equation*}
a_i\in\mathbb Z,\qquad b_j\in\mathbb Z+\frac12,
\end{equation*}
or
\begin{equation*}
a_i\in\mathbb Z+\frac12,\qquad b_j\in\mathbb Z.
\end{equation*}
Only the first pattern can be $W(D_{2p+1})$-conjugate to $\rho/2$, because $\rho/2$ has only $p$ half-integral coordinates. The opposite pattern would require $p+1$ half-integral coordinates in the first block. Therefore the genuine central characters represented by the first pattern are precisely the $\rho/2$-compatible ones. Using the notation in Lemma \ref{l:central-char}, there are $\chi_2$ and $\chi_4$, and hence
 $\widehat Z_{\rho/2}(\widetilde G)=\{\chi_2, \chi_4\}$.

Finally, consider $\widetilde G=\widetilde{\Spin}(2p+3,2p-1)$. Again the complex root system is of type $D_{2p+1}$, so $\rho/2$ has $p+1$ integral coordinates and $p$ half-integral coordinates. A genuine fundamental series parameter is conjugate to one of the form
\begin{equation*}
(a_1,\ldots,a_{p+1}\mid b_1,\ldots,b_{p-1}\mid x).
\end{equation*}
The two possible genuine integrality patterns are
\begin{equation*}
a_i\in\mathbb Z,\qquad b_j\in\mathbb Z+\frac12,
\end{equation*}
or
\begin{equation*}
a_i\in\mathbb Z+\frac12,\qquad b_j\in\mathbb Z.
\end{equation*}
The first pattern is compatible with $\rho/2$, by taking $x\in\mathbb Z+\frac12$. This gives $p+1$ integral coordinates and $(p-1)+1=p$ half-integral coordinates. The second pattern is not compatible with $\rho/2$, since the first block alone would already contain $p+1$ half-integral coordinates, whereas $\rho/2$ has only $p$ half-integral coordinates. Hence only the central character corresponding to the first pattern is $\rho/2$-compatible. In the notation of Lemma \ref{l:central-char}, 
 $\widehat Z_{\rho/2}(\widetilde G)=\{\chi_2\}$.
\end{proof}

\end{prop}

We also introduce a notion of compatibility between a genuine central
character and the parameter data at the Lie algebra level.  Let
\[
\gamma^\circ=(\tu H,\overline\gamma)
\]
denote the parameter datum before choosing an irreducible genuine
representation \(\Gamma\) of \(\tu H\), and put
\[
d_{\gamma^\circ}
=
\overline\gamma
+\rho_i(\overline\gamma)
-2\rho_{i,c}(\overline\gamma).
\]

\begin{definition}\label{d:gamma-compatible}
A genuine central character
$\chi\in\widehat Z_{\mathrm{gen}}(\tu G)$
is said to be \(\gamma^\circ\)-compatible if there exists an
irreducible genuine representation \(\Gamma\) of \(\tu H\) such that
\[
d\Gamma=d_{\gamma^\circ}
\]
and \(\Gamma\) has central character \(\chi\) on \(Z(\tu G)\).

Equivalently, let \(\xi_{\gamma^\circ}\) be the character of
\(\tu H^0\) determined by \(d_{\gamma^\circ}\), and set
\[
C_H=Z(\tu G)\cap\tu H^0.
\]
Then
\[
\chi\text{ is \(\gamma^\circ\)-compatible}
\quad\Longleftrightarrow\quad
\chi|_{C_H}
=
\xi_{\gamma^\circ}|_{C_H}.
\]
\end{definition}

Indeed, since
\[
Z(\tu H)=Z(\tu G)\tu H^0
\]
by Proposition~\ref{prop:genuine-character-determined}, the condition
\[
\chi|_{C_H}
=
\xi_{\gamma^\circ}|_{C_H}
\]
is precisely what is needed for the characters \(\chi\) and
\(\xi_{\gamma^\circ}\) to define a character of \(Z(\tu H)\).  More
explicitly, one may define
\[
\zeta_{\chi,\gamma^\circ}(zh)
=
\chi(z)\xi_{\gamma^\circ}(h),
\qquad
z\in Z(\tu G),\quad h\in\tu H^0.
\]
The agreement on \(C_H\) guarantees that this character is
well-defined.  Proposition~\ref{prop:genuine-character-determined}
then gives the required irreducible genuine representation of
\(\tu H\).

We use the notation
\begin{equation} \label{e:bigchi}
\mathcal X(\gamma^\circ)
=
\left\{
\chi\in\widehat Z_{\mathrm{gen}}(\tu G)
\ \middle|\
\chi|_{C_H}
=
\xi_{\gamma^\circ}|_{C_H}
\right\}.
\end{equation}

Thus, if \(\gamma^\circ\) has infinitesimal character \(\rho/2\), then
\[
\mathcal X(\gamma^\circ)
\subseteq
\widehat Z_{\rho/2}(\tu G).
\]

\subsection{Regular characters for type $D$} \label{s:parameters}

In this subsection, we set up the notation for $\rho/2$-regular characters when fixing a $\rho/2$-compatible genuine central character for the nonlinear groups of type $D$ under consideration; more precisely, $\widetilde{\Spin}(n+1, n-1)$, with $n=2p$ or $2p+1$  and $\widetilde{\Spin}(n+2,n-2)$ with $n=2p+1$. 

We follow the notation of \cite[Section 6]{BTs18}. Let $\mathfrak g_0=\mathfrak{so}(c,d)$, where $c+d=2n$. Recall that the conjugacy classes of Cartan subalgebras of $\mathfrak g_0$ have representatives
\begin{equation*}
\mathfrak h^{r^+,r^-,m,s}
=
{(x_1^+,\ldots,x_{r^+}^+,
x_1^-,\ldots,x_{r^-}^-,
y_1,\ldots,y_m,y_{m+1},\ldots,y_{2m},
z_1,\ldots,z_s)},
\end{equation*}
with Cartan involution $\theta$ given by
\begin{equation*}
\theta(x_i^\pm)=x_i^\pm,\qquad
\theta(y_j)=y_{j+m},\qquad
\theta(z_k)=-z_k.
\end{equation*}
We denote the corresponding Cartan subgroup of $\widetilde G$ by $\widetilde H^{r^+,r^-,m,s}$.

Throughout the paper, we fix
\begin{equation*}
\Delta^+=\Delta^+(\rho/2)=\{e_i\pm e_j\mid i<j\},
\end{equation*}
and $\Delta ^+ = \Delta_1^+\cup \Delta_{1/2}^+$, where
\begin{align*}
\Delta_1^+
&=
R^+(\rho/2)
=\{e_i\pm e_j\mid i<j,\ i-j\in 2\mathbb Z\},\\
\Delta_{1/2}^+
&=
\{e_i\pm e_j\mid i<j,\ i-j\in 2\mathbb Z+1\}.
\end{align*}

\subsubsection{The group
\(\widetilde G=\widetilde{\Spin}(2p+1,2p-1)\).}

Put \(n=2p\). The Cartan subgroups of \(\widetilde G\) are of the form
\[
\widetilde H^{r+1,r,m,s},
\]
where
\[
s=2\ell-1,\qquad 1\leq \ell\leq p,
\qquad r,m\geq 0,
\qquad
2r+1+2m+s=n.
\]

Let
\[
\widetilde H_s:=\widetilde H^{1,0,0,n-1}
\]
be the most split Cartan subgroup. A Lie-level
\(\rho/2\)-regular parameter attached to \(\widetilde H_s\) is of the
form
\[
\gamma^\circ(\{k\}^*),\qquad 1\leq k\leq n.
\]
Here \(\gamma^\circ(\{k\}^*)\) means that the \(k\)-th coordinate
$\displaystyle\frac{n-k}{2}$
of
$\displaystyle\frac{\rho}{2}
=
\left(
\frac{n-1}{2},\frac{n-2}{2},\ldots,\frac12,0
\right)$
is placed on the compact part of \(\widetilde H_s\), while the
remaining coordinates are real.

The following lemma determines the genuine central characters
compatible with these Lie-level parameters.

\begin{lemma} \label{l:2p+12p-1-cenchar-comp}
Using the notation of Lemma~\ref{l:central-char} and
\eqref{e:bigchi}, we have
\[
\mathcal X\bigl(\gamma^\circ(\{i\}^*)\bigr)
=
\{\chi_1\},
\qquad
i\in\{1,3,\ldots,2p-1\},
\]
and
\[
\mathcal X\bigl(\gamma^\circ(\{j\}^*)\bigr)
=
\{\chi_2\},
\qquad
j\in\{2,4,\ldots,2p\}.
\]
\end{lemma}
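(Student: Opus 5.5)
Here is the plan I would follow. Since $\mathcal X(\gamma^\circ)\subseteq\widehat Z_{\rho/2}(\tu G)=\{\chi_1,\chi_2\}$ by Proposition~\ref{prop:centers-compatible-central-characters}, it suffices to show that exactly one of $\chi_1,\chi_2$ agrees with $\xi_{\gamma^\circ}$ on $C_H$, and that which one it is depends on the parity of $k$. The approach is to avoid computing $C_H=Z(\tu G)\cap\tu H_s^0$ directly. Instead, I would move each Lie-level parameter $\gamma^\circ(\{k\}^*)$, by inverse Cayley transforms in real roots, to a fundamental (maximally compact) Cartan subgroup $\tu H^{p,p-1,0,1}$. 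There the integrality pattern of the coordinates detects the central character via Lemma~\ref{lem:genuine-infinitesimal-character-pattern}(2) and Lemma~\ref{l:central-char}(2). Inverse Cayley transforms preserve the genuine central character, as recalled in Section~\ref{s:parameters} preliminaries. So $\chi\in\mathcal X(\gamma^\circ(\{k\}^*))$ if and only if $\chi$ is compatible with the transformed parameter on the fundamental Cartan.

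The key steps are as follows.

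\textbf{Step 1.} For fixed $k$, pair the $n-1=2p-1$ real coordinates (all entries of $\rho/2$ except the $k$-th) into $p-1$ pairs, leaving one coordinate unpaired. For each pair $(x_a,x_b)$, the roots $e_a\pm e_b$ are real, and I apply the inverse Cayley transform in both $e_a-e_b$ and $e_a+e_b$. This converts two real coordinates into one compact coordinate in each of the two blocks $\Spin(2p+1)$ and $\Spin(2p-1)$. This should land on a parameter of shape $(a_1,\dots,a_p\mid b_1,\dots,b_{p-1}\mid x)$, where the $a$-block contains the original compact coordinate $\frac{n-k}{2}$.

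\textbf{Step 2.} I would arrange the pairing so that the resulting $a$-block and $b$-block are each of uniform integrality. This needs $p$ integral and $p$ half-integral entries overall, consistent with $\rho/2$ having $p$ of each. By Lemma~\ref{lem:genuine-infinitesimal-character-pattern}, a genuine parameter must have $a_i$ all of one type and $b_j$ all of the other. Since the $a$-block contains $\frac{n-k}{2}$, the pattern is forced:
\begin{itemize}
\item if $k$ is odd, then $\frac{n-k}{2}\in\bbZ+\frac12$ (as $n=2p$ is even), so $a_i\in\bbZ+\frac12$, and the central character is $\chi_1$ by Lemma~\ref{l:central-char}(2), since the lowest $\tu K$-type inherits the pattern;
\item if $k$ is even, then $a_i\in\bbZ$, giving $\chi_2$.
\end{itemize}
The argument that the remaining character is not compatible runs the same way: the opposite pattern would require the fixed coordinate $\frac{n-k}{2}$ to have the other parity, which is impossible.

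\textbf{Step 3.} Alternatively, or as a check, I would compute $C_H$ directly. $\tu H_s^0$ is the split torus times the one-dimensional compact torus in the $k$-th coordinate, and $Z(\tu G)\cap\tu H^0_s$ should be generated by $-1$ and the element $\exp(2\pi i\,\cdot)$ in the compact coordinate. One then evaluates $\xi_{\gamma^\circ}$ there, using $d_{\gamma^\circ}$. Here $\rho_i=\rho_{i,c}=0$ because the single compact coordinate gives no imaginary roots in type $D$, so $d_{\gamma^\circ}=\overline\gamma$, and $\xi$ is given by the $k$-th coordinate $\frac{n-k}{2}$. Comparing with the values of $\chi_1,\chi_2$, read off from $\mu_1,\mu_2$ on that element, distinguishes them exactly by whether $\frac{n-k}{2}$ is a half-integer.

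The main obstacle I expect is making Step 1 rigorous: one has to check that the inverse Cayley transforms are genuinely defined for the genuine parameter, i.e.\ that the real roots satisfy the parity condition in the nonlinear setting. In a double cover the parity conditions differ from the linear case, and a real root at non-integral $\rho/2$-pairing may not admit a transform. If that fails, I would fall back on the direct computation in Step 3, where the only delicate point is identifying the generator of $C_H$ in $\tu G$ (a lift of $-1$ in $\Spin(2p+1)\times\Spin(2p-1)$ lying in the compact torus), and evaluating $\mu_1,\mu_2$ on it.
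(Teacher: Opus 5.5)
Your Step~3 is the paper's proof, and it is the part of your plan that actually proves the lemma. The steps are:
\begin{itemize}
\item Since $\tu H_s$ has no imaginary roots, $d_{\gamma^\circ}=\overline\gamma$.
\item One has $C_H=\{1,z_+\}$, where $z_+=(-I,I)$ is the nontrivial central element in the compact circle $\Spin(2)\subset\Spin(2p+1)$.
\item Then $\xi_k(z_+)=e^{2\pi i(n-k)/2}=(-1)^k$, while $\chi_1(z_+)=-1$ and $\chi_2(z_+)=1$; as you note, these values can be read off from $\mu_1$ and $\mu_2$.
\end{itemize}
Because the two genuine characters already differ at $z_+$, exactly one of them agrees with $\xi_k$ on $C_H$. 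This gives both inclusions at once.

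One correction to your Step~3: $C_H$ is not generated by $-1$ and the circle element. The kernel element $-1=(-I,-I)$ does not lie in $\tu H_s^0$. The compact circle of $H_s$ is the image of $\Spin(2)\times\{1\}$ in $K$. Its preimage in $\tu K=\Spin(2p+1)\times\Spin(2p-1)$ is $\Spin(2)\times\{\pm1\}$, whose identity component contains $z_+$ but not $(-I,-I)$. The vector part contributes no torsion. Including $-1$ would not discriminate anyway, since every genuine character takes the value $-1$ there.

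Steps~1--2, which you present as the main route, are not a proof as written. Your claimed equivalence, ``$\chi\in\mathcal X(\gamma^\circ(\{k\}^*))$ iff $\chi$ is compatible with the fundamental-Cartan parameter,'' needs two facts you do not supply.
\begin{itemize}
\item[(i)] Your $2(p-1)$ inverse Cayley transforms must be defined for \emph{every} genuine regular character lying over $\gamma^\circ(\{k\}^*)$, whatever its central character. Without this you cannot exclude the other character, so ``runs the same way'' is unjustified.
\item[(ii)] For nonemptiness, Cayley transforms back from a genuine fundamental-Cartan parameter with the predicted $\chi$ must land on $\gamma^\circ(\{k\}^*)$.
\end{itemize}
You cannot simply cite the parametrization \eqref{e:param1}--\eqref{e:param2} of Section~\ref{s:parameters} for this. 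Its assignment of central characters to those forms is derived from this very lemma.

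Moreover, the obstacle you anticipate points the wrong way. By the Remark following Theorem~\ref{t:tau}, $\Gamma(\tilde m_\alpha)$ has eigenvalues $\pm i$. So a real root with \emph{integral} pairing never satisfies the genuine parity condition, and only roots in $\Delta_{1/2}$ are candidates. Hence each of your pairs must consist of one integral and one half-integral coordinate. Also, which block each transformed coordinate enters is determined by the grading; it is not something you ``arrange.''

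Even with these points filled in, the Cayley route only recovers, through much heavier machinery, the single sign $\xi_k(z_+)=\pm1$ that the direct evaluation gives in one line.
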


\begin{proof}
Let
\[
C_s
=
Z(\widetilde G)\cap \widetilde H_s^0.
\]
The connected compact part of \(\widetilde H_s^0\) is a copy of
\(\Spin(2)\) contained in the \(\Spin(2p+1)\)-factor of the maximal
compact subgroup
\[
\widetilde K
=
\Spin(2p+1)\times\Spin(2p-1).
\]
Write
\[
Z(\widetilde G)
=
\{(\epsilon_+I,\epsilon_-I):\epsilon_\pm=\pm1\},
\]
and put
\[
z_+=(-I,I).
\]
The element \(z_+\) is the unique nontrivial central element contained
in this compact circle. Since the remaining connected part of
\(\widetilde H_s^0\) is split and hence contains no nontrivial
finite-order elements, we have
\[
C_s=\{1,z_+\}.
\]

There are no imaginary roots for the most split Cartan. Hence, for
\(\gamma^\circ(\{k\}^*)\),
\[
d_{\gamma^\circ(\{k\}^*)}
=
\overline\gamma_{\{k\}^*}.
\]
Let \(\xi_k\) be the character of \(\widetilde H_s^0\) determined by
this differential. Its weight on the connected compact circle is
$c_k=\displaystyle\frac{n-k}{2}.$

Consequently,
\[
\xi_k(z_+)
=
e^{2\pi i c_k}
=
e^{\pi i(n-k)}
=
(-1)^{n-k}.
\]
Since \(n=2p\) is even, this becomes
\[
\xi_k(z_+)=(-1)^k.
\]

By  \cite[Lemma 6.2]{BTs18},
\[
\chi_1(z_+)=-1,
\qquad
\chi_2(z_+)=1.
\]
A genuine central character \(\chi\) is
\(\gamma^\circ(\{k\}^*)\)-compatible precisely when
\[
\chi|_{C_s}=\xi_k|_{C_s}.
\]
It follows that \(\chi_1\) is compatible exactly when \(k\) is odd,
whereas \(\chi_2\) is compatible exactly when \(k\) is even.
\end{proof}

For \(1\leq k\leq n\), let
$\widetilde\gamma(\{k\}^*)$
denote the unique \(\rho/2\)-regular character whose Lie-level
parameter is \(\gamma^\circ(\{k\}^*)\).
Since Cayley transforms preserve the restriction of the Cartan
representation to \(Z(\widetilde G)\), every \(\rho/2\)-regular $\tu\gamma$
character of \(\widetilde G\) with central character \(\chi_1\) is
obtained from one of
\[
\widetilde\gamma(\{i\}^*),
\qquad
i\in\{1,3,\ldots,2p-1\},
\]
by a sequence of inverse Cayley transforms. Similarly, every
\(\rho/2\)-regular character  $\tu\gamma$  with central character \(\chi_2\) is
obtained from one of
\[
\widetilde\gamma(\{j\}^*),
\qquad
j\in\{2,4,\ldots,2p\}.
\]

Thus,
\[
\widetilde\gamma
=
c_{\{\alpha_1,\ldots,\alpha_l\}}
\bigl(\widetilde\gamma(\{k\}^*)\bigr),
\qquad 1\leq k\leq n,
\]
for some set
\(\{\alpha_1,\ldots,\alpha_l\}\subset\Delta_{1/2}^+\)
of strongly orthogonal roots.

If $\tu \gamma$ is attached to the Cartan subgroup $\widetilde H^{r+1,r,m,s}$, then it may be written in one of the following two forms:
\begin{equation} \label{e:param1}
\tu\gamma\bigl(
\{\epsilon_1 i_1,\epsilon_1 j_1\},
\{\epsilon_2 i_2,\epsilon_2 j_2\},
\ldots,
\{\epsilon_m i_m,\epsilon_m j_m\},
\{i_{m+1},\ldots,i_{m+r},i_{m+r+1},
j_{m+1},\ldots,j_{m+r}\}^*
\bigr),
\end{equation}
or
\begin{equation}\label{e:param2}
\tu\gamma\bigl(
\{\epsilon_1 i_1,\epsilon_1 j_1\},
\{\epsilon_2 i_2,\epsilon_2 j_2\},
\ldots,
\{\epsilon_m i_m,\epsilon_m j_m\},\{i_{m+1},\ldots,i_{m+r},
j_{m+1},\ldots,j_{m+r},j_{m+r+1}\}^*
\bigr),
\end{equation}
where $\epsilon_k=\pm 1$,
\begin{equation*}
i_k\in\{1,3,\ldots,2p-1\},
\qquad
j_k\in\{2,4,\ldots,2p\}.
\end{equation*}

This notation means that $e_{i_k}-\epsilon_k e_{j_k}$ is noncompact imaginary for $\gamma$ when $1\leq k\leq m$. Moreover, for the indices occurring in the starred block, the roots $e_{i_k}\pm e_{i_l}$ and $e_{j_k}\pm e_{j_l}$ are compact imaginary, while the roots $e_{i_k}\pm e_{j_l}$ are noncompact imaginary. We also note that the parameter in \eqref{e:param1} is obtained from $\tu\gamma(\{i\}^*)$ by a sequence of inverse Cayley transforms and the parameter in  \eqref{e:param2} is obtained from $\tu\gamma(\{j\}^*)$ by a sequence of inverse Cayley transforms.

\begin{remark}
    We note that
in \eqref{e:param1}, the starred block contains one more odd index
than even indices. Consequently, this parameter is obtained from
\(\widetilde\gamma(\{i\}^*)\) for some odd \(i\), and hence has
central character \(\chi_1\). Similarly, in \eqref{e:param2}, the starred block contains one more
even index than odd indices. It is therefore obtained from
\(\widetilde\gamma(\{j\}^*)\) for some even \(j\), and hence has
central character \(\chi_2\).
\end{remark}

\subsubsection{The group
\(\widetilde G=\widetilde{\Spin}(2p+2,2p)\).}

Put \(n=2p+1\). The conjugacy classes of Cartan subgroups of
\(\widetilde G\) are represented by
\[
\widetilde H^{r+1,r,m,s},
\]
where
\[
s=2\ell,\qquad 0\leq \ell\leq p,
\qquad r,m\geq0,
\qquad
2r+1+2m+s=n.
\]

Let
\[
\widetilde H_s:=\widetilde H^{1,0,0,2p}
\]
be the most split Cartan subgroup. The Lie-level
\(\rho/2\)-regular parameters attached to \(\widetilde H_s\) are
\[
\gamma^\circ(\{i\}^*),
\qquad
i\in\{1,3,\ldots,2p+1\}.
\]

Here \(\gamma^\circ(\{i\}^*)\) means that the \(i\)-th coordinate
$\displaystyle\frac{n-i}{2}$
of
$\rho/2$
is placed on the compact part of \(\widetilde H_s\), while the
remaining coordinates are real.

\begin{lemma}
For every \(i\in\{1,3,\ldots,2p+1\}\), we have
\[
\mathcal X\bigl(\gamma^\circ(\{i\}^*)\bigr)
=
\widehat Z_{\rho/2}(\widetilde G) =\{\chi_2, \chi_4\}.
\]
\end{lemma}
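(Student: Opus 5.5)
Model the argument on the proof of Lemma~\ref{l:2p+12p-1-cenchar-comp}. By Definition~\ref{d:gamma-compatible}, $\chi\in\mathcal X(\gamma^\circ(\{i\}^*))$ if and only if $\chi|_{C_s}=\xi_i|_{C_s}$, where
\[
C_s=Z(\tu G)\cap\tu H_s^0
\]
and $\xi_i$ is the character of $\tu H_s^0$ with differential $d_{\gamma^\circ(\{i\}^*)}$. The most split Cartan $\tu H_s=\tu H^{1,0,0,2p}$ has no imaginary roots, so $d_{\gamma^\circ(\{i\}^*)}=\overline\gamma_{\{i\}^*}$. Hence $\xi_i$ has weight
\[
c_i=\frac{n-i}{2}=\frac{2p+1-i}{2}
\]
on the compact circle of $\tu H_s^0$. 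Since $i$ is odd and $n=2p+1$, the weight $c_i$ is an integer. This integrality is the key difference from the $\Spin(2p+1,2p-1)$ case.

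First, identify $C_s$. The identity component $\tu H_s^0$ is the product of a compact circle $\Spin(2)\subset\Spin(2p+2)$ and a split vector part. The vector part has no nontrivial elements of finite order, so $C_s$ is the intersection of $Z(\tu G)$ with this circle. By Lemma~\ref{l:center}, $Z(\tu G)\cong\bbZ_2\times\bbZ_4$, with the explicit description taken from \cite[Lemma 6.1, 6.2]{BTs18}. One checks that this intersection is $\{1,z_+\}$, where $z_+=(-I,I)\in\Spin(2p+2)\times\Spin(2p)$ is the element acting by $-1$ on the spin representations of the first factor and trivially on the second. I would verify this directly. The circle is the lift of a rotation in a single $2$-plane of $\bbR^{2p+2}$, so its only element mapping to $\pm I$ in $SO(2p+2)\times SO(2p)$ is the one covering the identity by the nontrivial fiber, namely $z_+$. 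This is exactly the step in the proof of Lemma~\ref{l:2p+12p-1-cenchar-comp}, with $2p+1$ replaced by $2p+2$.

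Next, evaluate both sides at $z_+$. On the circle side,
\[
\xi_i(z_+)=e^{2\pi i c_i}=(-1)^{n-i}=1,
\]
because $n-i$ is even. On the character side, use Lemma~\ref{l:central-char}(1) together with \cite[Lemma 6.2]{BTs18}. The element $z_+$ acts through the first factor, so it acts on a $\tu K$-type by the sign determined by the $\Spin(2p+2)$-component of the highest weight: $+1$ if the first coordinates are integral and $-1$ if they are half-integral. The characters $\chi_2$ and $\chi_4$ come from $\mu_2$ and $\mu_4$, whose first blocks are $0$. Hence $\chi_2(z_+)=\chi_4(z_+)=1$, while $\chi_1(z_+)=\chi_3(z_+)=-1$.

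It follows that $\mathcal X(\gamma^\circ(\{i\}^*))=\{\chi_2,\chi_4\}$, and by Proposition~\ref{prop:centers-compatible-central-characters}(2) this equals $\widehat Z_{\rho/2}(\tu G)$. The same conclusion also follows from the general inclusion $\mathcal X(\gamma^\circ)\subseteq\widehat Z_{\rho/2}(\tu G)$ combined with the fact that $z_+$ does not separate $\chi_2$ from $\chi_4$. The main obstacle I expect is the precise identification $C_s=\{1,z_+\}$ inside the $\bbZ_2\times\bbZ_4$ center. One must make sure no order-$4$ central element lies in the circle. That element projects to $-I$ in $SO(2p+2)\times SO(2p)$, and $-I$ is not in a one-plane rotation circle when $p\ge1$, so this should be fine.
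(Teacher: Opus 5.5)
Your proposal is correct and follows the paper's own argument: you identify $C_s=Z(\tu G)\cap\tu H_s^0=\{1,z_+\}$, observe that the compact weight $c_i=(2p+1-i)/2$ is an integer so that $\xi_i(z_+)=1$, and match this with $\chi_2(z_+)=\chi_4(z_+)=1$. Beyond the paper, you verify $C_s$ explicitly and check $\chi_1(z_+)=\chi_3(z_+)=-1$, which excludes the other two genuine characters directly; the paper instead leaves this exclusion to the inclusion $\mathcal X(\gamma^\circ)\subseteq\widehat Z_{\rho/2}(\tu G)$.
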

\begin{proof}
Let
\[
C_s
=
Z(\widetilde G)\cap \widetilde H_s^0.
\]

We can show that $C_s=\{1,z_+\}$, where $z_+=(-I, I)$ as in the proof of Lemma \ref{l:2p+12p-1-cenchar-comp}.

Let \(\xi_i\) be the character of \(\widetilde H_s^0\) determined by
$\overline\gamma_{\{ i\}^*}$. Its weight on the connected compact circle is
$c_i=\displaystyle\frac{n-i}{2}=\frac{2p+1-i}{2}$, which is always an integer since $i$ is odd. 

Consequently,
\[
\xi_i(z_+)
=
e^{2\pi i c_i}
=1. \]

On the other hand, 
by  \cite[Lemma 6.2]{BTs18},
\[
\chi_2(z_+)=\chi_4(z_+)=1,
\]
where $\chi_2$ and $\chi_4$ are the two elements in $\widehat{Z}_{\rho/2}(\tu G)$ by Proposition \ref{prop:centers-compatible-central-characters}. Hence,
$$
\chi_2|_{C_s} = \chi_4|_{C_s}=\xi_i|_{C_s}.
$$
The result follows.
\end{proof}

For \(i\in\{1,3,\ldots,2p+1\}\) and $\chi\in \widehat Z_{\rho/2}(\tu G)$,  let
$\widetilde\gamma(\{i\}^*)_\chi$
denote the  \(\rho/2\)-regular character whose Lie-level
parameter is \(\gamma^\circ(\{i\}^*)\) and genuine central character is $\chi$. When it is clear from the context, the subscript $\chi$ may be omitted. 
Since Cayley transforms preserve the restriction of the Cartan
representation to \(Z(\widetilde G)\), every \(\rho/2\)-regular 
character $\tu\gamma$ of \(\widetilde G\) with central character \(\chi\) is
obtained from one of
\[
\widetilde\gamma(\{i\}^*)_\chi,
\qquad
i\in\{1,3,\ldots,2p+1\},
\]
by a sequence of inverse Cayley transforms. 

Thus,
\[
\widetilde\gamma
=
c_{\{\alpha_1,\ldots,\alpha_l\}}
\bigl(\widetilde\gamma(\{i\}^*)_\chi \bigr),
\qquad  i\in\{1,3,\ldots,2p+1\},
\]
for some set
\(\{\alpha_1,\ldots,\alpha_l\}\subset\Delta_{1/2}^+\)
of strongly orthogonal roots.

If \(\widetilde\gamma\) is attached to the Cartan subgroup
\(\widetilde H^{r+1,r,m,s}\), then it may be written in the form
\[
\widetilde\gamma\bigl(
\{\epsilon_1 i_1,\epsilon_1 j_1\},
\{\epsilon_2 i_2,\epsilon_2 j_2\},
\ldots,
\{\epsilon_m i_m,\epsilon_m j_m\},
\{i_{m+1},\ldots,i_{m+r},i_{m+r+1},
j_{m+1},\ldots,j_{m+r}\}^*
\bigr)_\chi,
\]
where \(\epsilon_k=\pm1\),
\[
i_k\in\{1,3,\ldots,2p+1\},
\qquad
j_k\in\{2,4,\ldots,2p\}.
\]
This notation means that
$e_{i_k}-\epsilon_k e_{j_k}$
is noncompact imaginary for \(\widetilde\gamma\) when
\(1\leq k\leq m\). Moreover, for the indices occurring in the starred
block, the roots
$e_{i_k}\pm e_{i_l}$
and $e_{j_k}\pm e_{j_l}$
are compact imaginary, whereas the roots
$e_{i_k}\pm e_{j_l}$
are noncompact imaginary.

\begin{remark}
For each
\(\chi\in\widehat Z_{\rho/2}(\widetilde G)\), let
\(\mathcal B_{\rho/2,\chi}\) denote the block of genuine
\(\rho/2\)-regular characters with central character \(\chi\).
The preceding lemma shows that each Lie-level seed parameter
\[
\gamma^\circ(\{i\}^*),
\qquad
i\in\{1,3,\ldots,2p+1\},
\]
admits a genuine regular character in each of the two blocks
\[
\mathcal B_{\rho/2,\chi_2}
\qquad\text{and}\qquad
\mathcal B_{\rho/2,\chi_4}.
\]
Moreover, since cross actions through the integral Weyl group and Cayley transforms preserve the
genuine central character, the two blocks have the same
Lie-level parametrization described below.
\end{remark}

\subsubsection{The group
\(\widetilde G=\widetilde{\Spin}(2p+3,2p-1)\).}

Put \(n=2p+1\). The conjugacy classes of Cartan subgroups of
\(\widetilde G\) are represented by
\[
\widetilde H^{r+2,r,m,s},
\]
where
\[
s=2\ell-1,\qquad 1\leq\ell\leq p,
\qquad r,m\geq0,
\qquad
2r+2+2m+s=n.
\]

Let
\[
\widetilde H_s:=\widetilde H^{2,0,0,2p-1}
\]
be the most split Cartan subgroup. The Lie-level
\(\rho/2\)-regular parameters of the following type are
\[
\gamma^\circ(\{i_1,i_2\}^*),
\qquad
\{i_1,i_2\}\subset\{1,3,\ldots,2p+1\},
\quad i_1\neq i_2.
\]
Here \(\gamma^\circ(\{i_1,i_2\}^*)\) means that the
\(i_1\)-th and \(i_2\)-th coordinates
\[
\frac{n-i_1}{2},\qquad \frac{n-i_2}{2}
\]
of \(\rho/2\) are placed on the compact part of
\(\widetilde H_s\), while the remaining coordinates are real.

\begin{lemma}
For $\{i_1,i_2\}\subset\{1,3,\ldots,2p+1\},
\  i_1\neq i_2,$
we have
\[
\mathcal X\bigl(\gamma^\circ(\{i_1,i_2\}^*)\bigr)
=
\widehat Z_{\rho/2}(\widetilde G)
=
\{\chi_2\}.
\]
\end{lemma}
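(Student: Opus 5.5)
The plan follows the proofs of Lemma~\ref{l:2p+12p-1-cenchar-comp} and the analogous lemma for $\tu\Spin(2p+2,2p)$. Set $C_s=Z(\tu G)\cap \tu H_s^0$. By Definition~\ref{d:gamma-compatible}, the task is to compare $\chi_2|_{C_s}$ with $\xi|_{C_s}$, where $\xi$ is the character of $\tu H_s^0$ determined by $d_{\gamma^\circ}$. The most split Cartan $\tu H_s=\tu H^{2,0,0,2p-1}$ has no imaginary roots. Hence $d_{\gamma^\circ}=\overline\gamma_{\{i_1,i_2\}^*}$, and its weights on the two compact circles are
\[
c_{i_k}=\frac{n-i_k}{2}=\frac{2p+1-i_k}{2},\qquad k=1,2.
\]
These are integers because each $i_k$ is odd.

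The first step is to identify $C_s$. The connected compact part of $\tu H_s^0$ is the preimage of a two-dimensional torus $SO(2)\times SO(2)$ sitting in the $SO(2p+3)$-factor of $K$, so it is a maximal torus of a $\Spin(4)$ inside the $\Spin(2p+3)$-factor of $\tu K$. The split part $A$ is a vector group and contains no nontrivial elements of finite order. Therefore $C_s$ consists of the elements of $Z(\tu G)=\{(\epsilon_+I,\epsilon_-I)\}$ that lie in this torus, namely $C_s=\{1,z_+\}$ with $z_+=(-I,I)$. I also need $-1\in\tu G$ to lie in this connected torus; this holds because the cover is nonlinear, so $-1$ lies in the identity component of $\tu H_s^0$. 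The point to verify is that $z_+$, viewed inside the torus, is the element $\exp$ of a full $2\pi$ rotation in one coordinate, i.e. $\exp(2\pi i\, e_1^\vee)$ up to the spin lift. Concretely, $-I\in\Spin(2p+3)$ is a simultaneous $\pi$-rotation in each plane, and modulo the kernel of the cover it is the $2\pi$-rotation in a single plane of the torus.

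With $C_s$ in hand, I would evaluate
\[
\xi(z_+)=e^{2\pi i c_{i_1}}\quad(\text{or } e^{2\pi i(c_{i_1}+c_{i_2})},\ \text{depending on the description of } z_+).
\]
In either case the value is $1$, since $c_{i_1},c_{i_2}\in\bbZ$. By \cite[Lemma 6.2]{BTs18}, $\chi_2(z_+)=1$ (the $\tu K$-type $\mu_2$ is trivial on the $\Spin(2p+3)$-factor). Hence $\chi_2|_{C_s}=\xi|_{C_s}$, so $\chi_2\in\mathcal X(\gamma^\circ(\{i_1,i_2\}^*))$. The reverse inclusion $\mathcal X(\gamma^\circ)\subseteq\widehat Z_{\rho/2}(\tu G)=\{\chi_2\}$ follows from the remark after \eqref{e:bigchi} together with Proposition~\ref{prop:centers-compatible-central-characters}(3). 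Alternatively, Lemma~\ref{l:central-char}(2) gives $\chi_1(z_+)=-1$, since $\mu_1$ has half-integral entries on the first factor, and this already excludes $\chi_1$.

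The main obstacle is the first step: pinning down $C_s$ and the exact expression of $z_+$ inside the two-dimensional compact torus of the spin cover. I would handle it by working in $\Spin(4)\cong \SU(2)\times\SU(2)$, where the torus and its center are explicit. Whatever the precise form of $z_+$, it is $\exp$ of an integral combination of the coroots $e_1^\vee, e_2^\vee$, possibly with half-coefficients on both coordinates. An integral weight $(c_{i_1},c_{i_2})$ with $c_{i_1}+c_{i_2}$ of either parity still pairs to an integer with such an element only if the element is an integral combination; so I would check that $z_+$ is not the half-sum element $\exp(\pi i(e_1^\vee+e_2^\vee))$. That half-sum element lies over $-I$ in $SO(4)$ rather than over the identity, which confirms it.
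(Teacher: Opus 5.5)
Your argument is the paper's own: $C_s=\{1,z_+\}$ with $z_+=(-I,I)$; the compact weights $(2p+1-i_a)/2$ are integers, so $\xi(z_+)=1$; $\chi_2(z_+)=1$ by \cite[Lemma 6.2]{BTs18}; and the reverse inclusion comes from $\widehat Z_{\rho/2}(\tu G)=\{\chi_2\}$.

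Three side claims should be corrected, though none affects the conclusion.
\begin{enumerate}
\item $\tu H_s=\tu H^{2,0,0,2p-1}$ does have imaginary roots, namely $e_{i_1}\pm e_{i_2}$. They are compact, because the compact part of $M$ is a $\Spin(4)$. Hence $d_{\gamma^\circ}=\overline\gamma-\rho_i$, which shifts the compact weights by $-e_{i_1}$ (for $i_1<i_2$). The weights stay integral, so $\xi(z_+)=1$ still holds. The paper likewise works directly with $\overline\gamma$.
\item The kernel element $(-I,-I)$ of $\tu G\to G$ is not in $\tu H_s^0$. The cover splits over this torus, since the torus lies in a compact $\Spin(4)$. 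If $(-I,-I)$ were in $\tu H_s^0$, then $C_s$ would be all of $Z(\tu G)$, contradicting your own $C_s=\{1,z_+\}$. The element $z_+$ lies in $\tu H_s^0$ for a different reason: $z_+$ is $-1\in\Spin(4)$, which is central and hence lies in every maximal torus. Nonlinearity of the cover plays no role.
\item The element $-1\in\Spin(2p+3)$ lies over the identity of $SO(2p+3)$, not over a simultaneous $\pi$-rotation in each plane. Your final check, that $z_+$ is not $\exp(\pi i(e_1^\vee+e_2^\vee))$, gets this right.
\end{enumerate}
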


\begin{proof}
Let
\[
C_s
=
Z(\widetilde G)\cap \widetilde H_s^0.
\]
As in the proof of
Lemma~\ref{l:2p+12p-1-cenchar-comp}, we have
\[
C_s=\{1,z_+\},
\qquad
z_+=(-I,I).
\]

Let \(\xi_{i_1,i_2}\) be the character of
\(\widetilde H_s^0\) determined by
\(\overline\gamma_{\{i_1,i_2\}^*}\).
The two compact weights are
\[
c_{i_a}
=
\frac{n-i_a}{2}
=
\frac{2p+1-i_a}{2},
\qquad a=1,2.
\]
Since \(i_1\) and \(i_2\) are odd, both \(c_{i_1}\) and
\(c_{i_2}\) are integers. It follows that
\[
\xi_{i_1,i_2}(z_+)=1.
\]

By Proposition~\ref{prop:centers-compatible-central-characters},
$\widehat Z_{\rho/2}(\widetilde G)=\{\chi_2\},
$ and by \cite[Lemma 6.2]{BTs18},
$\chi_2(z_+)=1.$
Hence
\[
\chi_2|_{C_s}
=
\xi_{i_1,i_2}|_{C_s}.
\]
Therefore \(\chi_2\) is
\(\gamma^\circ(\{i_1,i_2\}^*)\)-compatible, and the result follows.
\end{proof}

For \(\{i_1,i_2\}\subset\{1,3,\ldots,2p+1\}\), $i_1\neq i_2$,  let
$\widetilde\gamma(\{i_1,i_2\}^*)$
denote the unique  \(\rho/2\)-regular character whose Lie-level
parameter is \(\gamma^\circ(\{i_1, i_2\}^*)\) and whose genuine central character is  $\chi_2$. 
These \(p(p+1)/2\) regular characters form a single orbit under
the cross action of \(W(\rho/2)\).
Every \(\rho/2\)-regular 
character $\tu\gamma$ of \(\widetilde G\) with central character \(\chi_2\) is
obtained from one of
\begin{equation*}
\tu\gamma(\{i_1,i_2\}^*),
\qquad
\{i_1,i_2\}\subset \{1,3,\ldots,2p+1\}, \ i_1\neq i_2.
\end{equation*}
by a sequence of inverse Cayley transforms. Thus, 
 \begin{equation*}
\tu\gamma=
c_{\{\alpha_1,\ldots,\alpha_l \}}\bigl(\tu\gamma( \{i_1,i_2\}^*)\bigr),
\end{equation*}
where $\{\alpha_1,\ldots,\alpha_l \}\subset \Delta_{1/2}^+$ is a set of strongly orthogonal roots.

If $\tu\gamma$ is attached to the Cartan subgroup $\widetilde H^{r+2,r,m,s}$, then it may be written in the form
\begin{equation*}
\tu\gamma\bigl(
\{\epsilon_1 i_1,\epsilon_1 j_1\},
\{\epsilon_2 i_2,\epsilon_2 j_2\},
\ldots,
\{\epsilon_m i_m,\epsilon_m j_m\},
\{i_{m+1},\ldots,i_{m+r+1},i_{m+r+2},
j_{m+1},\ldots,j_{m+r}\}^*
\bigr),
\end{equation*}
where $\epsilon_k=\pm 1$,
\begin{equation*}
i_k\in \{1,3,\ldots,2p+1\},
\qquad
j_k\in \{2,4,\ldots,2p\}.
\end{equation*}
This notation means that $e_{i_k}-\epsilon_k e_{j_k}$ is noncompact imaginary for $\gamma$ when $1\leq k\leq m$. For the indices occurring in the starred block, the roots $e_{i_k}\pm e_{i_l}$ and $e_{j_q}\pm e_{j_t}$ are compact imaginary, while the roots $e_{i_k}\pm e_{j_q}$ are noncompact imaginary.

\section{Small Representations}

In this section we study the small representations of the nonlinear real spin groups under consideration.

\subsection{Translation functors and $\tau$-invariants}

We recall the translation functors used to detect $\tau$-invariants. We state the definitions for $G$, but the same definitions apply to $\widetilde G$. For genuine representations of $\widetilde G$, translation functors are defined by tensoring with finite-dimensional representations of $G$ pulled back to $\widetilde G$. Since the kernel of $\widetilde G\to G$ acts trivially on such finite-dimensional representations, translation functors preserve genuineness.

Let $H\subset G$ be a Cartan subgroup, and let $\Lambda$ be the weight lattice coming from finite-dimensional representations of $G$. Let $\lambda\in\mathfrak h^*$ be a regular infinitesimal character. Fix a choice of positive integral roots for $\lambda$, and let $\Pi(\lambda)$ denote the corresponding set of simple integral roots.

For $\mu\in\Lambda$, let $F_\mu$ denote a finite-dimensional representation of $G$ whose set of extremal weights contains $\mu$.

\begin{definition}[{\cite[Definition 4.5.7]{V81}}]
Let $\pi$ be an admissible representation of $G$ with infinitesimal character $\lambda$. Suppose that $\lambda+\mu$ is also an infinitesimal character. The Jantzen--Zuckerman translation functor from $\lambda$ to $\lambda+\mu$ is defined by
\begin{equation*}
T_\lambda^{\lambda+\mu}(\pi)
:=
P_{\lambda+\mu}(\pi\otimes F_\mu),
\end{equation*}
where $P_{\lambda+\mu}$ denotes projection to the summand with generalized infinitesimal character $\lambda+\mu$.

For $\alpha\in\Pi(\lambda)$, choose an infinitesimal character
$\lambda_\alpha$ which is singular with respect to $\alpha$ and satisfies 
$\lambda-\lambda_\alpha\in\Lambda.$ We define
$$
\Psi _\alpha := T _\lambda^{\lambda_\alpha} 
$$
and call it the translation functor  to the $\alpha$-wall.

We also define the wall-crossing functor across the $\alpha$-wall by
\begin{equation*}
\psi_\alpha
:=T_{\lambda_\alpha}^{s_\alpha\lambda}\circ T_\lambda^{\lambda_\alpha}.
\end{equation*}
\end{definition}

We will also use translations between regular infinitesimal characters separated only by nonintegral walls. Namely, suppose $\lambda$ and $\lambda'$ are regular infinitesimal characters with $\lambda'-\lambda\in\Lambda$ and $R(\lambda)=R(\lambda')$. Then the translation functor $T_\lambda^{\lambda'}$ gives an equivalence between the corresponding regular blocks. Under the identification $\Pi(\lambda)=\Pi(\lambda')$, it preserves $\tau$-invariants. We will spell this out later. We refer to such a translation as translation across nonintegral walls.

\bigskip

Given a primitive ideal $I$ of the universal enveloping algebra $U(\mathfrak g)$, there is the Borho--Jantzen--Duflo $\tau$-invariant attached to $I$, denoted $\tau(I)$ (see \cite{V79a}). Since $G_{\mathbb C}$ is simply connected, there is an equivalent formulation of the $\tau$-invariant in terms of translation functors. We recall this definition below.

\begin{definition}
Let $\pi$ be an irreducible admissible representation of $G$ with regular infinitesimal character $\lambda$. The $\tau$-invariant of $\pi$ is the subset
\begin{equation*}
\tau(\pi)\subseteq \Pi(\lambda)
\end{equation*}
defined by
\begin{equation*}
\tau(\pi)
=
\{\alpha\in\Pi(\lambda)\mid \Psi_\alpha(\pi)=0\}.
\end{equation*}
We say that $\pi$ has maximal $\tau$-invariant if
\begin{equation*}
\tau(\pi)=\Pi(\lambda).
\end{equation*}
\end{definition}

The $\tau$-invariant is a measure of the size of an irreducible admissible representation: the larger the $\tau$-invariant, the smaller the representation.

\begin{definition}
Let $\pi$ be an irreducible genuine admissible representation of $\widetilde G$ with infinitesimal character $\rho/2$. We say that $\pi$ is small if it has maximal $\tau$-invariant.

We denote by
$\prod\nolimits_{\rho/2}^{s}(\widetilde G)$
the set of irreducible genuine small representations of $\widetilde G$ with infinitesimal character $\rho/2$. For a genuine central character $\chi$, we denote by
$\prod\nolimits_{\rho/2}^{s}(\widetilde G)_\chi$
the subset consisting of representations with central character $\chi$.
\end{definition}

Given a $\rho/2$-regular character $\gamma$, we will use the following criterion to compute the $\tau$-invariant of $J(\gamma)$.

\begin{thm}\label{t:tau}
\emph{(\cite[Theorem 4.12]{V79c} and \cite[Theorem 12.5]{H90})}
Suppose $\pi=J(\gamma)$, where
\begin{equation*}
\gamma=(\widetilde H,\Gamma,\overline\gamma)
\end{equation*}
is a $\lambda$-regular character. Let $\alpha\in\Pi(\lambda)$, and assume that $\alpha$ is simple with respect to $\Delta_\gamma^+$. Put
\begin{equation*}
l=
\frac{2\langle\alpha,\overline\gamma\rangle}
{\langle\alpha,\alpha\rangle}.
\end{equation*}
Then the following hold:
\begin{itemize}
\item[(a)] If $\alpha$ is real and $\gamma(m_\alpha)\neq (-1)^l\ep_\alpha$, then $\alpha\notin\tau(\pi)$.
\item[(b)] If $\alpha$ is real and $\gamma(m_\alpha)=(-1)^l\ep_\alpha$, then $\alpha\in\tau(\pi)$.
\item[(c)] If $\alpha$ is complex and $\theta\alpha\in\Delta_\gamma^+$, then $\alpha\notin\tau(\pi)$.
\item[(d)] If $\alpha$ is complex and $\theta\alpha\notin\Delta_\gamma^+$, then $\alpha\in\tau(\pi)$.
\item[(e)] If $\alpha$ is compact imaginary, then $\alpha\in\tau(\pi)$.
\item[(f)] If $\alpha$ is noncompact imaginary, then $\alpha\notin\tau(\pi)$.
\end{itemize}
\end{thm}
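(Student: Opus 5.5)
The plan is to reduce the statement to a rank-one computation at a singular infinitesimal character. By definition, $\alpha\in\tau(\pi)$ exactly when $\Psi_\alpha(\pi)=0$. So I would compute $\Psi_\alpha(J(\tu\gamma))$ directly, using how translation to the wall acts on standard and irreducible modules. For linear groups this is the Jantzen--Zuckerman/Vogan theory, \cite[Chapter 7–8]{V81} and \cite{V79c}. The nonlinear extension to $\tu G$ is formal, as I explain next.

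First, translation functors on $\tu G$ are built from finite-dimensional representations of $G$. These are pulled back to $\tu G$, and the kernel of the cover acts trivially on them. Hence everything is compatible with the Langlands classification of genuine representations in \cite{RT00,RT05}. In particular, the following holds for the translation to the wall $\lambda_\alpha$:
\[
\Psi_\alpha(I(\tu\gamma))=I(\tu\gamma_\alpha),
\]
where $\tu\gamma_\alpha$ is the singular parameter obtained by translating $\overline\gamma$ to $\lambda_\alpha$ while keeping $\tu H$ and the restriction of $\Gamma$ to $Z(\tu G)$ fixed. This is legitimate because of Proposition~\ref{prop:genuine-character-determined}. By exactness, $\Psi_\alpha(J(\tu\gamma))$ is a quotient of $I(\tu\gamma_\alpha)$. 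The key input is the following singular version of the Langlands classification: $\Psi_\alpha(J(\tu\gamma))$ is either $0$ or the irreducible Langlands quotient $J(\tu\gamma_\alpha)$. It is $0$ precisely when the singular parameter $\tu\gamma_\alpha$ fails the standard nonvanishing conditions with respect to the root $\alpha$, which has become singular. I would verify this first for $\alpha$ simple in $\Delta^+_\gamma$, using Vogan's argument that $\Psi_\alpha$ sends irreducibles to irreducibles or zero.

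Second, I go through the cases (a)–(f) by checking these nonvanishing conditions for $\tu\gamma_\alpha$. If $\alpha$ is compact imaginary, then $I(\tu\gamma_\alpha)$ is already $0$, since a discrete-series-type parameter singular on a compact root vanishes; this gives (e). If $\alpha$ is noncompact imaginary, the singular module is nonzero and $J(\tu\gamma)$ survives; this gives (f). For complex $\alpha$, the singular Langlands quotient is nonzero iff the positivity condition "$\theta\alpha\in\Delta^+_\gamma$" holds, because otherwise $\alpha$ is a descent for the cross action $s_\alpha\times\tu\gamma$. This gives (c) and (d).

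Third, the real case is handled by a rank-one reduction. I pass to the Levi subgroup $\tu M_\alpha$ attached to $\alpha$, whose derived group is a double cover of $\SL(2,\bbR)$ (or $\SL(2,\bbR)$ itself), and use induction in stages. The question then becomes whether the principal series of $\tu M_\alpha$ translated to the wall vanishes. That depends exactly on the parity condition: whether the value $\gamma(m_\alpha)$ matches $(-1)^l\epsilon_\alpha$, where $\epsilon_\alpha$ is Huang's sign correcting for the cover \cite[Section 12]{H90}. When the condition holds, $\alpha$ fails parity in the genuine sense. The rank-one reducible principal series then has $J$ killed at the wall, giving (b). Otherwise the inverse Cayley transform exists and $J$ survives, giving (a).

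The main obstacle I expect is this real-root case on the nonlinear cover. There one must pin down the sign $\epsilon_\alpha$ and check that $m_\alpha=\exp(\pi i\alpha^\vee)$ lifts to $\tu G$ with the correct order. In the simply laced nonlinear case $m_\alpha$ has order $4$ in $\tu G$, so the naive linear parity condition is shifted. For this step I would rely on Huang's explicit rank-one computation for $\tu{\SL}(2,\bbR)$ \cite[Theorem 12.5]{H90}, and transport it through the embedding $\tu M_\alpha\hookrightarrow\tu G$.
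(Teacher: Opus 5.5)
The paper does not prove this statement; it is quoted from Vogan and Huang, so your sketch has to stand on its own. The framework is the standard one: compute $\Psi_\alpha(J(\tu\gamma))$ from $\Psi_\alpha(I(\tu\gamma))$ together with a Langlands classification at the wall. Granted the singular classification you invoke, it is adequate for (c)--(f).

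\textbf{The real-root step is reversed.} This is the step you rightly single out as the crux. The condition $\gamma(m_\alpha)=(-1)^l\ep_\alpha$ \emph{is} the parity condition.
\begin{itemize}
\item When it holds, the rank-one principal series is reducible and the inverse Cayley transform $c_\alpha(\tu\gamma)$ exists. At the wall, the Hecht--Schmid identity writes $I(\tu\gamma_\alpha)$ as a sum of translated standard modules attached to $c_\alpha(\tu\gamma)$, on which $\alpha$ is noncompact imaginary. Its constituents are the surviving translates $\Psi_\alpha J$ of those parameters, and $\Psi_\alpha J(\tu\gamma)=0$.
\item The model is $\SL(2,\bbR)$. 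The spherical principal series at $\rho$ translates to the odd principal series at $\nu=0$, which is the sum of the two limits of discrete series, while the trivial representation is killed.
\item When the condition fails, the rank-one principal series is irreducible and no Cayley transform exists. Then $\tu\gamma_\alpha$ is final and $\Psi_\alpha J(\tu\gamma)=J(\tu\gamma_\alpha)\neq 0$.
\end{itemize}
You attach (b) to ``$\alpha$ fails parity, so the principal series is reducible,'' and (a) to ``the inverse Cayley transform exists, so $J$ survives.'' Every link is backwards, and the two halves contradict each other. In the genuine simply laced setting where the paper actually uses (a), the eigenvalues of $\tu\gamma(m_\alpha)$ are $\pm i$. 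So there is never an inverse Cayley transform through an integral real root there.

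\textbf{The reduction targets the wrong object.} The issue is not whether the principal series vanishes at the wall; a standard module attached to a real root never does. The issue is whether its Langlands quotient vanishes. Since $J(\tu\gamma)$ is not in general induced from a rank-one Langlands quotient, induction in stages does not reduce this to $\tu M_\alpha$. What you need is the uniqueness half of the singular classification: every irreducible at the wall is $\Psi_\alpha J$ of exactly one regular parameter. In case (b) these survivors are the parameters in $c_\alpha(\tu\gamma)$. In case (d) the survivor is the complex ascent $s_\alpha\times\tu\gamma$, not $\tu\gamma$; your remark that ``$\alpha$ is a descent for $s_\alpha\times\tu\gamma$'' has this the wrong way round.

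\textbf{Step 1 mis-describes the translated parameter.} The functor $\Psi_\alpha$ tensors with $F_\mu$, so the translated parameter carries $\Gamma\otimes\mu|_{\tu H}$. It does not keep the restriction of $\Gamma$ to $Z(\tu G)$ fixed. Two things change:
\begin{itemize}
\item the central character is multiplied by that of $F_\mu$;
\item the value at $m_\alpha$ is multiplied by $(-1)^{\langle\mu,\alpha^\vee\rangle}=(-1)^l$.
\end{itemize}
This twist is exactly what converts the condition $\gamma(m_\alpha)=(-1)^l\ep_\alpha$ at $\lambda$ into $\Gamma'(m_\alpha)=\ep_\alpha$ at the wall. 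It is also why, in the $\SL(2,\bbR)$ example, the spherical series becomes the odd one. If you hold $\Gamma|_{Z(\tu G)}$ fixed, you land in the wrong block whenever $F_\mu$ has nontrivial central character, and the parity bookkeeping for real roots breaks. Proposition~\ref{prop:genuine-character-determined} only says how a parameter is determined; it gives no invariance under translation.
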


\begin{remark}
If $\alpha\in\Pi(\lambda)$ is simple with respect to $\Delta_\gamma^+$ and real for $\gamma$, then
\begin{equation*}
\alpha\notin\tau(J(\gamma)).
\end{equation*}
Indeed, by \cite[Lemma 6.8]{RT05}, the eigenvalues of $\gamma(m_\alpha)$ are $\pm i$, whereas
\begin{equation*}
\ep_\alpha(-1)^l=\pm 1
\end{equation*}
since $l=\langle\lambda,\alpha^\vee\rangle\in\bbZ$. Thus
\begin{equation*}
\gamma(m_\alpha)\neq \ep_\alpha(-1)^l,
\end{equation*}
and hence $\alpha\notin\tau(J(\gamma))$ by Theorem \ref{t:tau}(a).
\end{remark}

We will also need the following result for translation functors across nonintegral walls.

\begin{thm}\label{t:transfun}
\emph{(\cite[Theorem 5.3]{RT05})}
Let $\gamma$ be a genuine $\lambda$-regular character of $\widetilde G$. Suppose $\alpha$ is a nonintegral simple root in $\Delta^+(\overline\gamma)$. Let $\psi_\alpha$ denote the corresponding nonintegral wall-crossing functor. Then
\begin{equation*}
\psi_\alpha(J(\gamma))
=
\begin{cases}
J((s_\alpha\times\gamma)^\alpha)
& \text{if $\alpha$ is noncompact imaginary,} \\
J((s_\alpha\times\gamma)_\alpha)
& \text{if $\alpha$ is real satisfying the parity condition,} \\
J(s_\alpha\times\gamma)
& \text{otherwise.}
\end{cases}
\end{equation*}
\end{thm}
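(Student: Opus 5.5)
The plan is to show that a nonintegral wall crossing is an equivalence of blocks, so that it carries irreducibles to irreducibles. The real work is then to identify where a single standard module goes, by reading off its character on each Cartan subgroup.

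First I would show that each of the two translations defining $\psi_\alpha$ is an equivalence. Choose $\lambda_\alpha$ with $\langle\lambda_\alpha,\alpha^\vee\rangle=0$ and $\lambda-\lambda_\alpha\in\Lambda$. Since $\alpha$ is nonintegral for $\lambda$, and $\lambda-\lambda_\alpha\in\Lambda$, no integral root of $\lambda$ becomes singular at $\lambda_\alpha$; moreover $R(\lambda_\alpha)\supseteq R(\lambda)$. I would choose $\lambda_\alpha$ with $R(\lambda_\alpha)=R(\lambda)\cup\{\pm\alpha\}$ if that is forced, or otherwise work directly with $T_\lambda^{s_\alpha\lambda}$, noting that $R(s_\alpha\lambda)=s_\alpha R(\lambda)$. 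The cleanest route is to observe that $\lambda$ and $s_\alpha\lambda$ lie in facets of the same type for their respective integral systems. Then the Vogan--Zuckerman argument (translation between regular infinitesimal characters lying in corresponding chambers) gives that
\[
\psi_\alpha:\mathcal{HC}(\fk g,\tu K)_{\lambda,\mathrm{gen}}\to\mathcal{HC}(\fk g,\tu K)_{s_\alpha\lambda,\mathrm{gen}}
\]
is an exact equivalence. In particular $\psi_\alpha$ sends irreducibles to irreducibles and standard modules to standard modules. Genuineness is preserved, since we tensor with representations of $G$ pulled back to $\tu G$.

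Second, I would compute $\psi_\alpha(I(\gamma))$ by realizing $I(\gamma)$ as a member of a coherent family $\Theta_\gamma$ on $\overline\gamma+\Lambda$, together with the Zuckerman description of translated standard modules. Since $\psi_\alpha$ is an equivalence and $I(\gamma)$ has a unique irreducible quotient $J(\gamma)$, the quotient $\psi_\alpha(J(\gamma))$ is the unique irreducible quotient of $\psi_\alpha(I(\gamma))$. It therefore suffices to identify the standard module $\psi_\alpha(I(\gamma))$ by its character. The value $\Theta_\gamma(s_\alpha\overline\gamma)$ is the formal character supported on the same Cartan $\tu H$, with differential moved by $s_\alpha$. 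I would split into cases according to the type of $\alpha$ for $\gamma$.

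If $\alpha$ is complex, or compact imaginary, or real failing the parity condition, this coherent value is already a standard module character with Langlands data $s_\alpha\times\gamma$. This gives the third case of Theorem~\ref{t:transfun}.

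The main obstacle is the remaining two cases: $\alpha$ noncompact imaginary, or $\alpha$ real satisfying parity. In these cases the coherent continuation of a (limit of) discrete series across the $\alpha$-wall is no longer a standard character on $\tu H$. Instead it is expressed through a Hecht--Schmid type identity involving the Cayley transform. I would handle this by reducing to the rank-one Levi $\tu M_\alpha$ generated by $\alpha$ and $\tu H$, using induction in stages, and computing explicitly in $\widetilde{SL}(2,\bbR)$ at nonintegral infinitesimal character. There the genuine principal series is irreducible. Its character coincides, after crossing the nonintegral wall, with the genuine discrete-series-type character attached to the compact Cartan; the roles are exchanged because $\langle\overline\gamma,\alpha^\vee\rangle\notin\bbZ$ means no finite-dimensional correction term appears. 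This identifies the image as $J((s_\alpha\times\gamma)^\alpha)$ in the noncompact imaginary case and as $J((s_\alpha\times\gamma)_\alpha)$ in the real parity case. I expect the delicate point to be checking that the genuine character of $Z(\tu H)$, or of $m_\alpha$, matches under the Cayley transform, using \cite[Lemma 6.8]{RT05}-type eigenvalue information.
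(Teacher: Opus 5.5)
The gap is in the two Cayley cases. You rightly call them the main obstacle, but you then handle them with a rank-one picture that is false. In the present simply laced setting, a nonintegral root that is noncompact imaginary, or real and satisfying the parity condition, has $\langle\overline\gamma,\alpha^\vee\rangle\in\bbZ+\frac12$. At such parameters the genuine principal series of the metaplectic $\tu{\SL}(2,\bbR)$ is \emph{reducible}.

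Take $\tu G=\tu{\SL}(2,\bbR)$ itself at $\lambda=\rho/2$, and index genuine $\tu K$-types by $\frac12+\bbZ$.
\begin{itemize}
\item The odd Weil representation $\omega^-$ ($\tu K$-types $\frac32+2\bbZ_{\geq0}$) is the genuine holomorphic discrete series with Harish-Chandra parameter $\rho/2$.
\item The principal series $I_{1/2}$ with $\tu K$-types $\frac12+2\bbZ$ has the antiholomorphic discrete series $(\omega^-)^*$ as a subrepresentation and the even Weil representation $\omega^+$ as Langlands quotient.
\item With $\lambda'=\lambda-\rho$ one computes $\psi_\alpha(\omega^-)=P_{\lambda'}(\omega^-\otimes\bbC^2)=\omega^+$ and $\psi_\alpha(\omega^+)=\omega^-$.
\end{itemize}
So the wall crossing exchanges a discrete series with the Langlands quotient of a principal series, which is exactly what the theorem says. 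Three of your assertions therefore fail:
\begin{itemize}
\item $\psi_\alpha$ does not carry standard modules to standard modules. The standard module $\omega^-$ goes to the nonstandard $\omega^+$, and $I_{1/2}$ goes to a length-two module whose unique irreducible quotient is the discrete series $\omega^-$.
\item The rank-one genuine principal series is not irreducible.
\item Its translate does not have a discrete-series character; its composition factors $\omega^-$, $(\omega^+)^*$ are those of the other genuine principal series.
\end{itemize}
Consequently, identifying $\psi_\alpha(I(\gamma))$ as a standard module through its character breaks down precisely in the two cases that distinguish this theorem from the linear one.

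What you need is an argument that never assumes $\psi_\alpha(I(\gamma))$ is standard. Because $\psi_\alpha$ is an exact equivalence, $\psi_\alpha(I(\gamma))$ still has $\psi_\alpha(J(\gamma))$ as its unique irreducible quotient. It therefore suffices to show two things: in the noncompact imaginary case, $\psi_\alpha(I(\gamma))$ is a quotient of $I((s_\alpha\times\gamma)^\alpha)$; in the real parity case, it surjects onto $I((s_\alpha\times\gamma)_\alpha)$.

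One way to obtain these is from the correct rank-one computation above, together with compatibility of translation functors with induction. In the real case this is parabolic induction. In the imaginary case the Levi generated by $\alpha$ and $\tu H$ is $\theta$-stable rather than the Levi of a real parabolic, so you need cohomological induction there. Alternatively, you can use the Hecht--Schmid-type identity, here $\Theta(\psi_\alpha(\omega^-))=\Theta(I_{1/2})-\Theta((\omega^-)^*)$, combined with irreducibility of $\psi_\alpha(J(\gamma))$. The matching of the genuine character on $m_\alpha$ that you single out only decides whether the Cayley transform exists; it is not where the argument is delicate.

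A smaller point concerns your first step. For nonintegral $\alpha$ there is no $\lambda_\alpha$ on the $\alpha$-wall with $\lambda-\lambda_\alpha\in\Lambda$, since $\langle\Lambda,\alpha^\vee\rangle\subseteq\bbZ$. Also, $s_\alpha\lambda$ defines the same infinitesimal character as $\lambda$ and in general is not in $\lambda+\Lambda$ (for $\rho/2$ in type $D$, $\frac12\alpha\notin\Lambda$). Instead, translate to some $\lambda'\in\lambda+\Lambda$ on the other side of the $\alpha$-hyperplane with the same positive integral roots, as in Proposition~\ref{p:transtau}. Vogan's equivalence then applies, and that part of your first step survives.
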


\begin{prop}\label{p:transtau}
Let $\gamma$ be a genuine $\lambda$-regular character of $\widetilde G$. Suppose $\alpha$ is a nonintegral simple root in $\Delta^+(\overline\gamma)$. Then the $\tau$-invariant is preserved by the nonintegral wall-crossing functor $\psi_\alpha$. More precisely, under the natural identification of the integral simple roots before and after crossing the nonintegral wall,
\begin{equation*}
\tau(J(\gamma))
=
\tau(\psi_\alpha(J(\gamma))).
\end{equation*}

\begin{proof}
Let $\lambda'=\lambda+\mu_\alpha$ be the infinitesimal character obtained from $\lambda$ by crossing the nonintegral $\alpha$-wall, as in \cite[Proposition 7.3.3]{V81}. Then the functor
\begin{equation*}
\psi_\alpha=T_\lambda^{\lambda'}
\end{equation*}
is an equivalence between the regular blocks with infinitesimal characters $\lambda$ and $\lambda'$.

Since $\lambda'-\lambda$ is integral, the integral root systems for $\lambda$ and $\lambda'$ are the same:
\begin{equation*}
R(\lambda)=R(\lambda').
\end{equation*}
Moreover, because $\alpha$ is nonintegral and simple, crossing the $\alpha$-wall does not change the positive integral roots. Hence we identify
\begin{equation*}
\Pi(\lambda)=\Pi(\lambda').
\end{equation*}

Let $\beta\in\Pi(\lambda)$. Choose $\lambda_\beta$ on the $\beta$-wall, and let $\lambda'_\beta$ be the corresponding infinitesimal character on the $\beta$-wall after crossing the nonintegral $\alpha$-wall. Translation functors commute, so we have a natural isomorphism
\begin{equation*}
T_{\lambda'}^{\lambda'_\beta}\circ T_\lambda^{\lambda'}
\simeq
T_{\lambda_\beta}^{\lambda'_\beta}\circ T_\lambda^{\lambda_\beta}.
\end{equation*}
In other words,
\begin{equation*}
\Psi'_\beta\circ \psi_\alpha
\simeq
\psi_{\alpha,\beta}\circ \Psi_\beta,
\end{equation*}
where $\Psi_\beta=T_\lambda^{\lambda_\beta}$, $\Psi'_\beta=T_{\lambda'}^{\lambda'_\beta}$, and $\psi_{\alpha,\beta}=T_{\lambda_\beta}^{\lambda'_\beta}$ is the corresponding nonintegral wall-crossing functor between the singular blocks.

By \cite[Proposition 7.3.3]{V81}, the functor $\psi_{\alpha,\beta}$ is also an equivalence. Therefore
\begin{equation*}
\Psi_\beta(J(\gamma))=0
\iff
\Psi'_\beta(\psi_\alpha(J(\gamma)))=0.
\end{equation*}
By the definition of the $\tau$-invariant, this is equivalent to
\begin{equation*}
\beta\in\tau(J(\gamma))
\iff
\beta\in\tau(\psi_\alpha(J(\gamma))).
\end{equation*}
Since this holds for every $\beta\in\Pi(\lambda)$, we obtain
\begin{equation*}
\tau(J(\gamma))=\tau(\psi_\alpha(J(\gamma)))
\end{equation*}
under the natural identification $\Pi(\lambda)=\Pi(\lambda')$.
\end{proof}

\end{prop}

\subsection{Small representations in type \(D\)}

For a \(\rho/2\)-regular character
\(\widetilde\gamma(\mathcal S)_\chi\) introduced in
Section~\ref{s:parameters}, we denote by
$J(\mathcal S)_\chi$
the corresponding irreducible quotient. 
When the genuine central
character is clear from the context, we omit the subscript \(\chi\)
and simply write \(J(\mathcal S)\).

We first introduce a set of
representations that will be shown to be small.

\begin{definition}\label{d:list}
Let \(\chi\) be a \(\rho/2\)-compatible genuine central character.
We define
\(\mathcal R_{\rho/2}(\widetilde G)_\chi\)
as follows.

\begin{itemize}
\item[(a)]
If $\widetilde G=\widetilde{\Spin}(2p+1,2p-1),$
then
$$
\mathcal R_{\rho/2}(\widetilde G)_{\chi}
=\begin{cases}
    \left\{
J(\{1,2\},\{3,4\},\ldots,
  \{2p-3,2p-2\},\{2p-1\}^*)_{\chi_1}
\right\}, &\text{ if }  \chi=\chi_1,     \\
\left\{
J(\{2p\}^*)_{\chi_2}
\right\}, &\text{ if  } \chi=\chi_2.
\end{cases}
$$
\item[(b)]
If $\widetilde G=\widetilde{\Spin}(2p+2,2p),$
then
\begin{align*}
\mathcal R_{\rho/2}(\widetilde G)_\chi
=\bigl\{
&
J(\{2p+1\}^*)_\chi,\\
&
J(\{1,2\},\{3,4\},\ldots,
  \{2p-1,2p\},\{2p+1\}^*)_\chi,\\
&
J(\{1,2\},\{3,4\},\ldots,
  \{2p-3,2p-2\},
  \{-(2p-1),-2p\},\{2p+1\}^*)_\chi
\bigr\}.
\end{align*}

\item[(c)]
If $\widetilde G=\widetilde{\Spin}(2p+3,2p-1),$ then
\[
\mathcal R_{\rho/2}(\widetilde G)_\chi
=
\left\{
J(\{1,2\},\{3,4\},\ldots,
  \{2p-3,2p-2\},
  \{2p-1,2p+1\}^*)_\chi
\right\}.
\]

\end{itemize}
\end{definition}
In cases (a) and (c), the genuine central character is uniquely
determined by the parameter, so we will often omit the subscript
\(\chi\) from \(J(\mathcal S)_\chi\).

\begin{prop}\label{p:RD1}
Let \(\widetilde G\) be one of the groups in
Definition~\ref{d:list}. For each \(\rho/2\)-compatible genuine
central character \(\chi\), we have
\[
\mathcal R_{\rho/2}(\widetilde G)_\chi
\subseteq
\prod\nolimits_{\rho/2}^{s}(\widetilde G)_\chi.
\]
\end{prop}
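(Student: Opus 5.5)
The plan is to verify directly that each representation $J(\mathcal S)_\chi$ in Definition~\ref{d:list} has maximal $\tau$-invariant, i.e.\ $\tau(J(\mathcal S)_\chi)=\Pi(\rho/2)$. First I will make $\Pi(\rho/2)$ explicit. Since $R(\rho/2)=\Delta_1=\{\pm e_i\pm e_j\mid i-j\in 2\bbZ\}$, the integral root system splits as a product of two type $D$ systems, one on the odd indices and one on the even indices. Its simple roots are
\[
e_1-e_3,\ e_3-e_5,\ \ldots,\ e_{i'}+e_{i''}
\]
for the two largest odd indices $i'<i''$, and similarly
\[
e_2-e_4,\ \ldots,\ e_{j'}+e_{j''}
\]
on the even indices. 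Degenerate low-rank factors ($D_1$, $D_2$) will be handled separately; for instance, the odd block for $\widetilde{\Spin}(2p+2,2p)$ is $D_{p+1}$.

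The basic tool is Theorem~\ref{t:tau}. It cannot be applied naively, because an integral simple root $e_i\pm e_{i+2}$ is not simple in $\Delta^+(\overline\gamma)$: the nonintegral root $e_i-e_{i+1}$ lies between. So for each $\beta\in\Pi(\rho/2)$ I will perform a sequence of nonintegral wall crossings $\psi_\alpha$, with $\alpha\in\Delta^+_{1/2}$ simple. The goal is to move the infinitesimal character to a chamber in which $\beta$ becomes simple for the full positive system, while the integral positive system stays fixed.

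By Proposition~\ref{p:transtau}, $\tau$ is unchanged under these crossings. By Theorem~\ref{t:transfun}, at each crossing the parameter changes by the cross action $s_\alpha\times$, composed with a Cayley transform if $\alpha$ is noncompact imaginary, or with an inverse Cayley transform if $\alpha$ is real and satisfies the parity condition. I will track this explicitly in the set notation $\{\epsilon i,\epsilon j\}$, $\{\ldots\}^*$ of Section~\ref{s:parameters}.

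Once $\beta$ is simple, I will read off its type for the new parameter. It lies in $\tau$ if it is compact imaginary, or complex with $\theta\beta\notin\Delta^+$, by Theorem~\ref{t:tau}(d),(e). The remark after that theorem rules out $\beta$ being real.

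The roots will be treated by case type.

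\emph{Consecutive pairs.} For parameters built from consecutive pairs $\{2k-1,2k\},\{2k+1,2k+2\}$ with $e_{2k-1}-e_{2k}$ and $e_{2k+1}-e_{2k+2}$ noncompact imaginary, the roots $e_{2k-1}\pm e_{2k+1}$ and $e_{2k}\pm e_{2k+2}$ are complex. Here $\theta$ interchanges the two coordinates in each pair, up to the signs $\epsilon$. After crossing the nonintegral wall $e_{2k}-e_{2k+1}$, which is complex with no Cayley transform, $e_{2k-1}-e_{2k+1}$ becomes simple. I then check the sign of $\theta(e_{2k-1}-e_{2k+1})=e_{2k}-e_{2k+2}$ in the new positive system. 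The sign $\epsilon=-1$ in the last pair $\{-(2p-1),-2p\}$ of part~(b) is exactly what makes the final simple roots $e_{i'}+e_{i''}$ and $e_{j'}+e_{j''}$ behave correctly.

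\emph{Starred block.} Indices inside a starred block give compact imaginary integral roots ($e_{i_k}\pm e_{i_l}$ within one parity). These lie in $\tau$ once made simple, and the crossings needed to achieve this pass only through noncompact imaginary roots $e_{i}\pm e_{j}$ of opposite parity, and hence involve Cayley transforms.

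\emph{Most split parameters.} For $J(\{2p\}^*)$ and $J(\{2p+1\}^*)$ almost all coordinates are real, and the integral roots among them are real. Here the essential point is that crossing the adjacent real nonintegral wall $e_i-e_{i+1}$, when it satisfies the parity condition (computed via $\Gamma(m_\alpha)$ and \cite[Lemma 6.8]{RT05}), replaces the parameter by an inverse Cayley transform $(s_\alpha\times\gamma)_\alpha$. On the new Cartan, $\beta=e_i-e_{i+2}$ becomes complex with $\theta\beta$ negative.

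I expect this last step to be the main obstacle: one must verify the parity condition at every needed real nonintegral crossing, for the given genuine central character $\chi$. This is presumably where the $\chi$-dependence enters, and where case~(a) and case~(b) differ. I would first settle $p=1,2$ by hand, for instance $\widetilde{\Spin}(5,3)$, to fix conventions, and then run the general argument by induction on $p$, peeling off the pair $\{1,2\}$ and reducing to the corresponding Levi of lower rank.
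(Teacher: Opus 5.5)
Your plan is essentially the paper's proof. For each $\beta\in\Pi(\rho/2)$ the paper crosses at most three nonintegral walls next to $\beta$, using Theorem~\ref{t:transfun} with $\tau$ preserved by Proposition~\ref{p:transtau}, until $\beta$ is simple, and then reads $\beta$ off as compact imaginary or a complex descent via Theorem~\ref{t:tau}; because these computations are local they work uniformly in $p$, so no induction through Levi subgroups is needed.

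There are two slips to fix when you carry it out. First, for an $\epsilon=+1$ pair one has $\theta e_{2k-1}=-e_{2k}$, so $\theta(e_{2k-1}-e_{2k+1})=-e_{2k}+e_{2k+2}\notin s_\beta(\Delta^+)$ is a descent; your sign would give an ascent. Second, in case (c) the compact root $e_{2p-1}+e_{2p+1}$ involves no Cayley transform, since the intervening index $2p$ is real.
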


\begin{proof}

It suffices to prove
\[
\Pi(\rho/2)\subseteq\tau(J(\gamma))
\]
for each \(J(\gamma)\) listed in Definition~\ref{d:list}. 

Throughout, we assume that $p\ge 2$. For case (b) with $p=1$, the proof is similar. 

Recall that
\[
\Pi(\rho/2)=
\begin{cases}
\{e_i-e_{i+2}:1\leq i\leq2p-2\}
\cup
\{e_{2p-3}+e_{2p-1},
  e_{2p-2}+e_{2p}\},
&\text{in case (a)},\\[1mm]
\{e_i-e_{i+2}:1\leq i\leq2p-1\}
\cup
\{e_{2p-1}+e_{2p+1},
  e_{2p-2}+e_{2p}\},
&\text{in cases (b) and (c)}.
\end{cases}
\]

We repeatedly use the following argument. Suppose that
\[
\psi_{\beta_l}\cdots\psi_{\beta_1}(J(\gamma))
=
J(\delta),
\qquad
w=s_{\beta_l}\cdots s_{\beta_1},
\]
and that \(\alpha\in\Pi(\rho/2)\) is simple with respect to
\(w(\Delta^+)\). If \(\alpha\) is compact imaginary for \(\delta\),
or if it is complex and
$\theta_\delta(\alpha)\notin w(\Delta^+),$ then Theorem~\ref{t:tau} gives
\(\alpha\in\tau(J(\delta))\), and
Proposition~\ref{p:transtau} implies
\[
\alpha\in\tau(J(\gamma)).
\]

We now treat the three families separately.
\medskip

For (a), first consider
$\gamma=
\tu\gamma(\{1,2\},\{3,4\},\ldots,
\{2p-3,2p-2\},\{2p-1\}^*).$
Its Cartan involution is
\[
\theta_\gamma(x_1,\ldots,x_{2p})
=
(-x_2,-x_1,-x_4,-x_3,\ldots,
-x_{2p-2},-x_{2p-3},x_{2p-1},-x_{2p}).
\]

We first consider the roots \(e_i-e_{i+2}\).
Let \(i\) be odd with \(1\leq i\leq 2p-5\), and put
$\beta=e_{i+1}-e_{i+2}.$
Then
$e_i-e_{i+2}$ and 
$e_{i+1}-e_{i+3}$
are simple with respect to \(s_\beta(\Delta^+)\).
Since \(\beta\) is complex for \(\gamma\), Theorem~\ref{t:transfun}
gives
\[
\psi_\beta(J(\gamma))
=
J(\delta),
\qquad
\delta:=s_\beta\times\gamma.
\]
Moreover, \(\theta_\delta=\theta_\gamma\), and both roots above are
complex for \(\delta\). We have
\begin{align*}
\theta_\delta(e_i-e_{i+2})
&=-e_{i+1}+e_{i+3}
   \notin s_\beta(\Delta^+),\\
\theta_\delta(e_{i+1}-e_{i+3})
&=-e_i+e_{i+2}
   \notin s_\beta(\Delta^+).
\end{align*}
Hence the preceding observation shows that
\[
e_i-e_{i+2}\in\tau(J(\gamma)),
\qquad
1\leq i\leq 2p-4.
\]

For the remaining roots near the end of the Dynkin diagram, take
$\beta=e_{2p-2}-e_{2p-1}.$
Then
$e_{2p-3}-e_{2p-1},
e_{2p-2}-e_{2p},$
and 
$e_{2p-2}+e_{2p}$
are simple with respect to \(s_\beta(\Delta^+)\).
Again \(\beta\) is complex for \(\gamma\), so for
$\delta=s_\beta\times\gamma$, we have \(\theta_\delta=\theta_\gamma\), and all three roots are
complex for \(\delta\). Furthermore,
\begin{align*}
\theta_\delta(e_{2p-3}-e_{2p-1})
&=-e_{2p-2}+e_{2p-1}
  \notin s_\beta(\Delta^+),\\
\theta_\delta(e_{2p-2}-e_{2p})
&=-e_{2p-3}+e_{2p}
  \notin s_\beta(\Delta^+),\\
\theta_\delta(e_{2p-2}+e_{2p})
&=-e_{2p-3}-e_{2p}
  \notin s_\beta(\Delta^+).
\end{align*}
Thus
\[
e_{2p-3}-e_{2p-1},
\qquad
e_{2p-2}\pm e_{2p}
\]
belong to \(\tau(J(\gamma))\).

It remains to consider \(e_{2p-3}+e_{2p-1}\). Set
\[
w=s_{\beta_3}s_{\beta_2}s_{\beta_1},
\]
where
\[
\beta_1=e_{2p-3}-e_{2p-2},
\qquad
\beta_2=e_{2p-1}-e_{2p},
\qquad
\beta_3=e_{2p-1}+e_{2p}.
\]
Then \(e_{2p-3}+e_{2p-1}\) is simple with respect to
\(w(\Delta^+)\). According to the root types encountered along the
successive wall crossings, define
\begin{align*}
\delta_1
&:=(s_{\beta_1}\times\gamma)^{\beta_1},
&&\text{since \(\beta_1\) is noncompact imaginary for \(\gamma\)},\\
\delta_2
&:=s_{\beta_2}\times\delta_1,
&&\text{since \(\beta_2\) is complex for \(\delta_1\)},\\
\delta=\delta_3
&:=s_{\beta_3}\times\delta_2,
&&\text{since \(\beta_3\) is complex for \(\delta_2\)}.
\end{align*}
By Theorem~\ref{t:transfun},
\[
\psi_{\beta_3}\psi_{\beta_2}\psi_{\beta_1}(J(\gamma))
=
J(\delta).
\]
The Cartan involution of \(\delta\) is
\[
\theta_\delta(x_1,\ldots,x_{2p})
=
(-x_2,-x_1,\ldots,
-x_{2p-4},-x_{2p-5},
-x_{2p-3},-x_{2p-2},
x_{2p-1},-x_{2p}).
\]
Hence \(e_{2p-3}+e_{2p-1}\) is complex for \(\delta\), and
\[
\theta_\delta(e_{2p-3}+e_{2p-1})
=
-e_{2p-3}+e_{2p-1}
\notin w(\Delta^+).
\]
Therefore
\[
e_{2p-3}+e_{2p-1}\in\tau(J(\gamma)).
\]
It follows that \(J(\gamma)\) has maximal \(\tau\)-invariant.

Now consider
$\gamma=\gamma(\{2p\}^*).$
In this case
\[
\theta_\gamma(x_1,\ldots,x_{2p})
=
(-x_1,\ldots,-x_{2p-1},x_{2p}).
\]

Let \(i\) be odd with \(1\leq i\leq 2p-3\), and set
$\beta=e_{i+1}-e_{i+2}.$
The roots
$e_i-e_{i+2}$ and
$e_{i+1}-e_{i+3}$
are simple with respect to \(s_\beta(\Delta^+)\).
Since \(\beta\) is real for \(\gamma\), Theorem~\ref{t:transfun}
gives
\[
\psi_\beta(J(\gamma))
=
J(\delta),
\qquad
\delta:=(s_\beta\times\gamma)_\beta.
\]
The root \(\beta\) is noncompact imaginary for \(\delta\), and
\[
\theta_\delta(x_1,\ldots,x_{2p})
=
(-x_1,\ldots,-x_i,-x_{i+2},-x_{i+1},
-x_{i+3},\ldots,-x_{2p-1},x_{2p}).
\]
Thus the two roots above are complex for \(\delta\), and
\begin{align*}
\theta_\delta(e_i-e_{i+2})
&=-e_i+e_{i+1}
  \notin s_\beta(\Delta^+),\\
\theta_\delta(e_{i+1}-e_{i+3})
&=-e_{i+2}+e_{i+3}
  \notin s_\beta(\Delta^+).
\end{align*}
It follows that
\[
e_i-e_{i+2}\in\tau(J(\gamma)),
\qquad
1\leq i\leq2p-2.
\]

For \(i=2p-3\), the same wall crossing is given by
$\beta=e_{2p-2}-e_{2p-1}.$
Besides the two roots considered above,
\(e_{2p-2}+e_{2p}\) is also simple with respect to
\(s_\beta(\Delta^+)\). For
$\delta=(s_\beta\times\gamma)_\beta$, 
we have
\[
\theta_\delta(x_1,\ldots,x_{2p})
=
(-x_1,\ldots,-x_{2p-3},
-x_{2p-1},-x_{2p-2},x_{2p}),
\]
and hence \(e_{2p-2}+e_{2p}\) is complex for \(\delta\), with
\[
\theta_\delta(e_{2p-2}+e_{2p})
=
-e_{2p-1}+e_{2p}
\notin s_\beta(\Delta^+).
\]
Therefore
\[
e_{2p-2}+e_{2p}\in\tau(J(\gamma)).
\]

It remains to consider \(e_{2p-3}+e_{2p-1}\). Let
\[
w=s_{\beta_3}s_{\beta_2}s_{\beta_1},
\]
where
\[
\beta_1=e_{2p-3}-e_{2p-2},
\qquad
\beta_2=e_{2p-1}-e_{2p},
\qquad
\beta_3=e_{2p-1}+e_{2p}.
\]
Then \(e_{2p-3}+e_{2p-1}\) is simple with respect to
\(w(\Delta^+)\). Define successively
\begin{align*}
\delta_1
&:=(s_{\beta_1}\times\gamma)_{\beta_1},
&&\text{since \(\beta_1\) is real for \(\gamma\)},\\
\delta_2
&:=s_{\beta_2}\times\delta_1,
&&\text{since \(\beta_2\) is complex for \(\delta_1\)},\\
\delta=\delta_3
&:=s_{\beta_3}\times\delta_2,
&&\text{since \(\beta_3\) is complex for \(\delta_2\)}.
\end{align*}
By Theorem~\ref{t:transfun},
\[
\psi_{\beta_3}\psi_{\beta_2}\psi_{\beta_1}(J(\gamma))
=
J(\delta).
\]
The Cartan involution of \(\delta\) is
\[
\theta_\delta(x_1,\ldots,x_{2p})
=
(-x_1,\ldots,-x_{2p-4},
-x_{2p-2},-x_{2p-3},
-x_{2p-1},x_{2p}).
\]
Therefore \(e_{2p-3}+e_{2p-1}\) is complex for \(\delta\), and
\[
\theta_\delta(e_{2p-3}+e_{2p-1})
=
-e_{2p-2}-e_{2p-1}
\notin w(\Delta^+).
\]
Hence
\[
e_{2p-3}+e_{2p-1}\in\tau(J(\gamma)).
\]
Thus \(J(\{2p\}^*)\) also has maximal \(\tau\)-invariant.

For (b), we suppress the subscript \(\chi\), since the following
calculations are the same for each
\(\chi\in\widehat Z_{\rho/2}(\widetilde G)\).

\begin{itemize}

\item
The proof for \(J(\{2p+1\}^*)\) is identical to that for
\(J(\{2p\}^*)\) in (a), with \(2p\) replaced by \(2p+1\).

\item
Let $\gamma=
\tu\gamma(\{1,2\},\{3,4\},\ldots,
\{2p-1,2p\},\{2p+1\}^*).$
Then
\[
\theta_\gamma(x_1,\ldots,x_{2p+1})
=
(-x_2,-x_1,-x_4,-x_3,\ldots,
-x_{2p},-x_{2p-1},x_{2p+1}).
\]

For odd \(i\), \(1\leq i\leq2p-3\), take
$\beta=e_{i+1}-e_{i+2}.$
As in (a), \(\beta\) is complex for \(\gamma\), and after the cross
action \(s_\beta\times\gamma\), the roots
$e_i-e_{i+2}$ and $e_{i+1}-e_{i+3}$ are complex descents. Hence
\[
e_i-e_{i+2}\in\tau(J(\gamma)),
\qquad 1\leq i\leq2p-2.
\]

Next take
$\beta=e_{2p-1}-e_{2p}.$
This root is noncompact imaginary for \(\gamma\). For $\delta=(s_\beta\times\gamma)^\beta,$ the roots \(e_{2p-1}\pm e_{2p+1}\) are complex, and
\begin{align*}
\theta_\delta(e_{2p-1}-e_{2p+1})
&=-e_{2p-1}-e_{2p+1}
  \notin s_\beta(\Delta^+),\\
\theta_\delta(e_{2p-1}+e_{2p+1})
&=-e_{2p-1}+e_{2p+1}
  \notin s_\beta(\Delta^+).
\end{align*}
Thus
\[
e_{2p-1}\pm e_{2p+1}\in\tau(J(\gamma)).
\]

It remains to consider \(e_{2p-2}+e_{2p}\). Set
\[
w=s_{\beta_3}s_{\beta_2}s_{\beta_1},
\]
where $\beta_1=e_{2p-2}-e_{2p-1},\
\beta_2=e_{2p}-e_{2p+1},\ 
\beta_3=e_{2p}+e_{2p+1}.$

All three roots are complex at the corresponding stages, so
\[
\delta
=
s_{\beta_3}\times
\bigl(s_{\beta_2}\times(s_{\beta_1}\times\gamma)\bigr)
\]
satisfies
\[
\psi_{\beta_3}\psi_{\beta_2}\psi_{\beta_1}(J(\gamma))
=
J(\delta),
\qquad
\theta_\delta=\theta_\gamma.
\]
Since \(e_{2p-2}+e_{2p}\) is simple with respect to \(w(\Delta^+)\)
and
\[
\theta_\delta(e_{2p-2}+e_{2p})
=
-e_{2p-3}-e_{2p-1}
\notin w(\Delta^+),
\]
we obtain
\[
e_{2p-2}+e_{2p}\in\tau(J(\gamma)).
\]
Hence \(J(\gamma)\) has maximal \(\tau\)-invariant.

\item
Let $\gamma=
\tu \gamma(\{1,2\},\{3,4\},\ldots,
\{-(2p-1),-2p\},\{2p+1\}^*).$
Then
\[
\theta_\gamma(x_1,\ldots,x_{2p+1})
=
(-x_2,-x_1,-x_4,-x_3,\ldots,
-x_{2p-2},-x_{2p-3},
x_{2p},x_{2p-1},x_{2p+1}).
\]

For odd \(i\), \(1\leq i\leq2p-5\), the same argument as above,
with
$\beta=e_{i+1}-e_{i+2},$
gives
\[
e_i-e_{i+2}\in\tau(J(\gamma)),
\qquad
1\leq i\leq2p-4.
\]

Taking
$\beta=e_{2p-2}-e_{2p-1},$
which is complex for \(\gamma\), gives in addition
\[
e_{2p-3}-e_{2p-1},
\quad
e_{2p-2}-e_{2p}
\in\tau(J(\gamma)).
\]

Next, \(\beta=e_{2p-1}-e_{2p}\) is real for \(\gamma\). For
$\delta=(s_\beta\times\gamma)_\beta,$ the roots \(e_{2p-1}\pm e_{2p+1}\) are compact imaginary.
Therefore
\[
e_{2p-1}\pm e_{2p+1}\in\tau(J(\gamma)).
\]

Finally, using the same three-step translation as above,
\[
\beta_1=e_{2p-2}-e_{2p-1},\qquad
\beta_2=e_{2p}-e_{2p+1},\qquad
\beta_3=e_{2p}+e_{2p+1},
\]
we obtain a parameter \(\delta\) for which
\(e_{2p-2}+e_{2p}\) is a complex descent. Indeed,
\[
\theta_\delta(e_{2p-2}+e_{2p})
=
-e_{2p-3}+e_{2p-1}
\notin
s_{\beta_3}s_{\beta_2}s_{\beta_1}(\Delta^+).
\]
Hence
\[
e_{2p-2}+e_{2p}\in\tau(J(\gamma)).
\]
Thus this representation also has maximal \(\tau\)-invariant.

\end{itemize}

\medskip

For (c), let $\gamma=
\tu\gamma(\{1,2\},\{3,4\},\ldots,
\{2p-3,2p-2\},\{2p-1,2p+1\}^*).$
Then
\[
\theta_\gamma(x_1,\ldots,x_{2p+1})
=
(-x_2,-x_1,-x_4,-x_3,\ldots,
-x_{2p-2},-x_{2p-3},
x_{2p-1},-x_{2p},x_{2p+1}).
\]

For odd \(i\) with \(1\leq i\leq 2p-5\), take
$\beta=e_{i+1}-e_{i+2}.$
As in case (a), \(\beta\) is complex for \(\gamma\), and the same
single-wall argument gives
\[
e_i-e_{i+2},\ e_{i+1}-e_{i+3}
\in\tau(J(\gamma)).
\]
Hence
\[
e_i-e_{i+2}\in\tau(J(\gamma)),
\qquad 1\leq i\leq 2p-4.
\]

Next take
$\beta=e_{2p-2}-e_{2p-1}.$
Again \(\beta\) is complex for \(\gamma\). For
$\delta=s_\beta\times\gamma,$
the roots
$e_{2p-3}-e_{2p-1}$ and
$e_{2p-2}\pm e_{2p}$
are simple with respect to \(s_\beta(\Delta^+)\) and are complex
descents for \(\delta\), by the same calculation as in case (a).
Therefore
\[
e_{2p-3}-e_{2p-1},
\qquad
e_{2p-2}\pm e_{2p}
\in\tau(J(\gamma)).
\]

Finally, since \(2p-1\) and \(2p+1\) occur in the starred block,
the root
$e_{2p-1}+e_{2p+1}$
is compact imaginary for \(\gamma\). Hence
\[
e_{2p-1}+e_{2p+1}\in\tau(J(\gamma)).
\]
Thus
$\Pi(\rho/2)\subseteq\tau(J(\gamma)),$
and \(J(\gamma)\) has maximal \(\tau\)-invariant.
\end{proof}

\begin{remark}\label{r:number}
Let
\begin{equation*}
\mathcal{R}_{\rho/2}(\tu G)
=
\bigsqcup_{\chi\in \widehat Z_{\rho/2}(\tu G)}
\mathcal{R}_{\rho/2}(\tu G)_\chi.
\end{equation*}
By Definition~\ref{d:list} and
Proposition~\ref{prop:centers-compatible-central-characters},
we obtain the following cardinalities:

\begin{equation*}
\begin{array}{c|c|c|c}
\tu G
&
|\widehat Z_{\rho/2}(\tu G)|
&
|\mathcal{R}_{\rho/2}(\tu G)_\chi|
&
|\mathcal{R}_{\rho/2}(\tu G)|
\\
\hline
\tu{\Spin}(2p+1,2p-1)
& 2 & 1 & 2
\\
\tu{\Spin}(2p+2,2p)
& 2 & 3 & 6
\\
\tu{\Spin}(2p+3,2p-1)
& 1 & 1 & 1
\end{array}
\end{equation*}
Here $\chi$ denotes a fixed $\rho/2$-compatible genuine central character.
\end{remark}

We conclude this section with the following exhaustion result, whose
proof will be given in the next section.

\begin{thm}\label{t:small-exhaustion}
Let
\begin{equation*}
\widetilde G\in
\{\widetilde{\Spin}(2p+1,2p-1),
\widetilde{\Spin}(2p+2,2p),
\widetilde{\Spin}(2p+3,2p-1)\}.
\end{equation*}
For every $\rho/2$-compatible genuine central character $\chi$, we have
\begin{equation*}
\mathcal R_{\rho/2}(\widetilde G)_\chi
=
\prod\nolimits_{\rho/2}^{s}(\widetilde G)_\chi.
\end{equation*}
Consequently,
\begin{equation*}
\mathcal R_{\rho/2}(\widetilde G)
=
\prod\nolimits_{\rho/2}^{s}(\widetilde G).
\end{equation*}
\end{thm}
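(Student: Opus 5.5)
Since Proposition~\ref{p:RD1} already gives $\mathcal R_{\rho/2}(\widetilde G)_\chi\subseteq\prod\nolimits^s_{\rho/2}(\widetilde G)_\chi$, the plan is to prove equality by a counting argument. We will show that $|\prod\nolimits^s_{\rho/2}(\widetilde G)_\chi|$ is at most the number recorded in Remark~\ref{r:number}, namely $1$, $3$, $1$ for the three families. The second assertion of the theorem then follows by taking the disjoint union over $\chi\in\widehat Z_{\rho/2}(\widetilde G)$.

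To count small representations we use coherent continuation, as announced in the introduction. Fix $\chi$ and consider the block $\mathcal B_{\rho/2,\chi}$. The span of $\{[J(\tu\gamma)]:\tu\gamma\in\mathcal B_{\rho/2,\chi}\}$ in the Grothendieck group carries the coherent continuation action of the integral Weyl group $W(\rho/2)$, which is of type $D\times D$ here. A standard fact is that $J(\tu\gamma)$ has maximal $\tau$-invariant if and only if every simple integral reflection $s_\alpha$ acts on it by $-1$ in the coherent continuation representation, modulo lower terms. Consequently, the number of irreducibles in the block with $\tau(J)=\Pi(\rho/2)$ equals the multiplicity of the sign representation of $W(\rho/2)$ in the coherent continuation representation. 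That representation is isomorphic to $\bigoplus_{\widetilde H}\Ind_{W(\widetilde H)_{\chi}}^{W(\rho/2)}(\epsilon_{\widetilde H})$, a sum over Cartan subgroups (equivalently over cross-action orbits in $\mathcal B_{\rho/2,\chi}$) of representations induced from stabilizers, twisted by the usual characters $\epsilon$ built from imaginary and real roots.

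By Frobenius reciprocity, the sign multiplicity becomes a count of orbits whose stabilizer character agrees with $\sgn$. Concretely, the character $\epsilon$ is trivial on compact imaginary reflections, is the sign on complex pairs, and so on. We then run through the Cartan subgroups $\widetilde H^{r+1,r,m,s}$ (respectively $\widetilde H^{r+2,r,m,s}$) using the explicit parametrizations \eqref{e:param1}, \eqref{e:param2} and their analogues. For each cross-action orbit we test whether the restriction of the integral-root stabilizer character equals the sign. We expect the contributions to be as follows.
\begin{itemize}
\item For $\widetilde{\Spin}(2p+1,2p-1)$, exactly one orbit contributes in each block: the fundamental Cartan orbit for $\chi_1$, and the most split orbit $\{2p\}^*$ for $\chi_2$.
\item For $\widetilde{\Spin}(2p+2,2p)$, three orbits contribute: the most split Cartan and the two orbits with $m=p$ differing by the sign $\epsilon_p$.
\item For $\widetilde{\Spin}(2p+3,2p-1)$, a single orbit contributes.
\end{itemize}

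The main obstacle is this sign-multiplicity computation itself. The stabilizers $W(\widetilde H)_\chi$ mix compact and noncompact imaginary, real and complex integral roots. Because only the half-integral roots in $\Delta^+_{1/2}$ are Cayley transformed, most Cartan subgroups should be excluded quickly. The reason is that some integral root is either real satisfying the parity condition (which is impossible, by the Remark following Theorem~\ref{t:tau}, so the root acts with the wrong sign) or noncompact imaginary, and either way the $\epsilon$-character fails to be the sign. The plan is to prove an exclusion lemma of this form first, reducing the count to a handful of orbits that are then checked directly.

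A fallback, if the coherent continuation bookkeeping proves unwieldy, is to argue directly with Theorem~\ref{t:tau}. We would show that any parameter outside the list has some integral simple root, possibly after nonintegral wall crossings via Theorem~\ref{t:transfun} and Proposition~\ref{p:transtau}, that is noncompact imaginary, real, or a complex ascent, and hence lies outside the $\tau$-invariant.
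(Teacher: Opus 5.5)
\textbf{There is a genuine gap: the exclusion step, which is the heart of the count, is not done, and the mechanism you propose for it cannot work.} Your overall framework is the paper's. You take the inclusion from Proposition~\ref{p:RD1}, then bound the multiplicity of $\sgn$ in the coherent continuation representation, reduced via Frobenius reciprocity to testing \eqref{e:key} on one representative per Cartan. What remains is to exclude all but $1$, $3$, $1$ Cartans.

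By \eqref{e:cohcon}, $\epsilon_\gamma(w)=(-1)^{m(\gamma,w)}$ for $w\in W_\gamma$. So a reflection in an imaginary root that stabilizes $\gamma$ (compact, or noncompact of type~II) acts by $-1=\sgn$, not trivially. Imaginary integral roots can therefore never witness the failure of \eqref{e:key}. Integral real roots can: they are automatically nonparity and give $\epsilon=+1\neq\sgn$. But they exist only when two real coordinates have the same parity, i.e.\ exactly when $s\geq 3$.

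The Cartans with $m\geq1$ and $s\leq2$ have no integral real roots at all. The same holds, for $\widetilde{\Spin}(2p+2,2p)$, for those with $s=0$, $m\geq1$, $r_-\geq1$. On $\widetilde H^{1,0,p-1,1}\subset\widetilde{\Spin}(2p+1,2p-1)$, for instance, the stabilizer is purely complex. With only your tools every Cartan with $s\leq2$ survives — already $p$ of them for $\widetilde{\Spin}(2p+1,2p-1)$ — so the bound does not come down to $1$.

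The missing idea is a witness in the complex part of the stabilizer. The paper takes $w\in W(\rho/2)$ acting by $-1$ on four consecutive coordinates ($2p-3,\dots,2p$, resp.\ $2p-2,\dots,2p+1$). It checks $w\theta_\gamma=\theta_\gamma w$ for a suitably chosen representative. It then tracks a word of $20$ successively simple reflections through the cross action and finds an odd number of imaginary steps. Hence $\epsilon_\gamma(w)=-1$ while $\sgn(w)=1$. Your claim that $\epsilon$ ``is the sign on complex pairs'' is precisely what fails there.

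\textbf{Your predicted list of contributing orbits is also wrong, and inconsistent with your own real-root criterion.} For $p\geq2$ the most split Cartan always carries an integral real root (e.g.\ $e_1-e_3$ for $\{2p\}^*$), so it never contributes. You are conflating the Cartan that supports a small representation's Langlands parameter with the Cartan whose cross-action orbit carries the copy of $\sgn$. For example, $\Omega_2=J(\{2p\}^*)$ lives on the most split Cartan, whereas the surviving orbit in the $\chi_2$-block lies on the fundamental Cartan $\widetilde H^{p,p-1,0,1}$. The survivors are, respectively:
$\widetilde H^{p,p-1,0,1}$;
$\widetilde H^{p,p-1,0,2}$, $\widetilde H^{p+1,p,0,0}$, $\widetilde H^{1,0,p,0}$;
and $\widetilde H^{p+1,p-1,0,1}$.

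The bound also uses that, once $\chi$ is fixed, the $\rho/2$-regular characters on a given Cartan form a single cross-action orbit (Remark~\ref{r:orbit}). Your ``two orbits with $m=p$'' contradicts this, since the $\Phi$ and $\Xi$ parameters lie in one orbit on $\widetilde H^{1,0,p,0}$. Only the upper bound $1,3,1$ is needed; equality then follows from Proposition~\ref{p:RD1} and Definition~\ref{d:list}.

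Your fallback — computing $\tau$-invariants of every parameter in each block via Theorems~\ref{t:tau} and~\ref{t:transfun} — would be a genuinely different route. As stated, though, it is only a program: it needs a uniform-in-$p$ case analysis over all parameters, which you do not supply.
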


To prove Theorem~\ref{t:small-exhaustion}, we count the small
representations of \(\widetilde G\) with fixed genuine central
character using the coherent continuation action of the integral
Weyl group. The resulting cardinalities agree with those in
Remark~\ref{r:number}, and Proposition~\ref{p:RD1} then yields the
desired equalities. This will be carried out in the next section.

\section{Counting Small Representations}
\label{s:counting}

\subsection{Coherent continuation}

We recall the coherent-continuation representations used to count small representations. Given an infinitesimal character $\lambda$, let $\calF(\lambda)$ be the family of infinitesimal characters obtained from $\lambda$ by crossing nonintegral walls. For $\lambda'\in\calF(\lambda)$ and a fixed genuine central character $\chi$, let $\calB_{\lambda',\chi}$ denote the set of equivalence classes of standard representation parameters with infinitesimal character $\lambda'$ and central character $\chi$.

For the counting argument, we fix $\lambda=\rho/2$ and a $\rho/2$-compatible genuine central character $\chi$. The integral Weyl group $W(\lambda)$ acts on $\calB_{\lambda,\chi}$ by the cross action. It also acts on $\bbZ[\calB_{\lambda,\chi}]$ by coherent continuation.

Let $w\in W(\lambda)$, and write
\begin{equation}\label{e:sim-refl}
w=s_{\beta_\ell}\cdots s_{\beta_1},
\end{equation}
where each $\beta_k$ is simple for the chamber
$s_{\beta_{k-1}}\cdots s_{\beta_1}(\Delta^+)$.
For $\gamma\in\calB_{\lambda,\chi}$, let $m(\gamma,w)$ be the number of imaginary roots among the $\beta_k$, computed successively along the cross action. Then the coherent continuation formula can be expressed as: 
\begin{equation}\label{e:cohcon}
w\cdot \gamma
=
(-1)^{m(\gamma,w)}\,w\times\gamma
+
\text{(terms attached to more split Cartan subgroups)}.
\end{equation}

Choose representatives $\gamma_j$ for the cross-action orbits of $W(\lambda)$ on $\calB_{\lambda,\chi}$, and let
\begin{equation*}
W_{\gamma_j}
=
\{w\in W(\lambda)\mid w\times\gamma_j=\gamma_j\}.
\end{equation*}
Then as $W(\lambda)$-representations, 
\begin{equation}\label{e:ind}
\bbZ[\calB_{\lambda,\chi}]
\simeq
\bigoplus_j
\operatorname{Ind}_{W_{\gamma_j}}^{W(\lambda)}(\epsilon_j),
\end{equation}
where $\epsilon_j$ is the one-dimensional representation of $W_{\gamma_j}$ defined by
\begin{equation*}
w\cdot\gamma_j
=
\epsilon_j(w)\gamma_j
+
\text{terms attached to more split Cartan subgroups}.
\end{equation*}

By Frobenius reciprocity,
\begin{equation*}
[\operatorname{sgn}_{W(\lambda)}:\bbZ[\calB_{\lambda,\chi}]]
=
\sum_j
[
\operatorname{sgn}_{W(\lambda)}|_{W_{\gamma_j}}:\epsilon_j
].
\end{equation*}
Thus the number of small representations with central character $\chi$ is the number of representatives $\gamma_j$ satisfying
\begin{equation}\label{e:key}
\operatorname{sgn}_{W(\lambda)}|_{W_{\gamma_j}}
=
\epsilon_j;
\end{equation}
See \cite{Ts19} for more details. 
\medskip

We will use the following description of the cross stabilizer. Let $\gamma\in\calB_{\lambda,\chi}$. Write
\begin{equation*}
R(\lambda)=R^i_\gamma(\lambda)\sqcup R^r_\gamma(\lambda)\sqcup R^C_\gamma(\lambda),
\end{equation*}
where $R^i_\gamma(\lambda)$, $R^r_\gamma(\lambda)$, and $R^C_\gamma(\lambda)$ are the integral imaginary, real, and complex roots for $\gamma$, respectively. Let
\begin{equation*}
W^i_\gamma(\lambda)=W(R^i_\gamma(\lambda)),
\qquad
W^r_\gamma(\lambda)=W(R^r_\gamma(\lambda)).
\end{equation*}
Furthermore, let $W^C_\gamma(\lambda)^\theta$ denote the subgroup generated by elements of the form $s_\alpha s_{\theta\alpha}$,
where $\alpha\in R^C_\gamma(\lambda)$ and $s_\alpha s_{\theta\alpha}$ preserves the regular character $\gamma$. Then the cross stabilizer of $\gamma$ is
\begin{equation}\label{e:stab}
W_\gamma=
\left[
W^i_\gamma(\lambda)\times W^r_\gamma(\lambda)
\right]
\rtimes
W^C_\gamma(\lambda)^\theta.
\end{equation}

\bigskip
\subsection{Type \(D_n\)}

Let
\[
\widetilde G\in
\{
\widetilde{\Spin}(2p+1,2p-1),
\widetilde{\Spin}(2p+2,2p),
\widetilde{\Spin}(2p+3,2p-1)
\}.
\]
The Cartan subgroups of the groups under consideration are as follows:
\begin{align*}
&\hspace*{1cm}
\widetilde G
&&
\widetilde H=\widetilde H^{r_+,r_-,m,s}
\\
&\widetilde{\Spin}(2p+1,2p-1)
&&
\widetilde H^{r,r-1,m,s},
&
&s=2k-1,\quad 1\leq k\leq p,
\\
&&&&&
1\leq r\leq p+\frac{1-s}{2},
\qquad
m=p-r+\frac{1-s}{2},
\\[1mm]
&\widetilde{\Spin}(2p+2,2p)
&&
\widetilde H^{r,r-1,m,s},
&
&s=2k,\quad 0\leq k\leq p,
\\
&&&&&
1\leq r\leq p+1-\frac{s}{2},
\qquad
m=p-r+1-\frac{s}{2},
\\[1mm]
&\widetilde{\Spin}(2p+3,2p-1)
&&
\widetilde H^{r,r-2,m,s},
&
&s=2k-1,\quad 1\leq k\leq p,
\\
&&&&&
2\leq r\leq p+\frac{3-s}{2},
\qquad
m=p-r+\frac{3-s}{2}.
\end{align*}

\begin{remark}\label{r:orbit}
For each group under consideration, after fixing a
\(\rho/2\)-compatible genuine central character \(\chi\), the
\(\rho/2\)-regular characters attached to a fixed Cartan subgroup
form a single cross-action orbit of \(W(\rho/2)\).

There is a slight distinction for
\(\widetilde G=\widetilde{\Spin}(2p+1,2p-1)\).
Since the cross action preserves the genuine central character,
different central characters always give distinct cross-action
orbits. The special feature in this case is that the underlying
Lie-level parameters in the two orbits are themselves different.
More precisely, the most split parameters
\[
\widetilde\gamma(\{i\}^*),
\qquad
i\in\{1,3,\ldots,2p-1\},
\]
have central character \(\chi_1\), whereas
\[
\widetilde\gamma(\{j\}^*),
\qquad
j\in\{2,4,\ldots,2p\},
\]
have central character \(\chi_2\).
Thus the two central-character blocks are represented by different
Lie-level cross-action orbits.
\end{remark}

The goal is to rule out the \(\rho/2\)-regular parameters that fail
to satisfy \eqref{e:key}. For each Cartan subgroup
$\widetilde H=\widetilde H^{r_+,r_-,m,s},$
we choose a representative
$\gamma=\gamma_{r_+,r_-,m,s}$ of the corresponding cross-action orbit as follows.

\begin{itemize}

\item[(a)] Suppose \(s\geq3\).
\begin{itemize}
    \item[(i)] If
$\widetilde G\in \{\widetilde{\Spin}(2p+2,2p),
\widetilde{\Spin}(2p+3,2p-1)\},$
or
$\widetilde G=\widetilde{\Spin}(2p+1,2p-1)$
with central character $\chi_1$,
we choose \(\gamma\) such that
\[
\theta_\gamma(x_j)=-x_j,
\qquad
2p-2\leq j\leq2p.
\]
\item[(ii)] If
$\widetilde G=\widetilde{\Spin}(2p+1,2p-1)$ with central character $\chi_2$,
we choose \(\gamma\) such that
\[
\theta_\gamma(x_j)=-x_j,
\qquad
2p-3\leq j\leq2p-1, \text{ and }
\]
\[
\theta_\gamma(x_{2p})=x_{2p}.
\]
\end{itemize}
\item[(b)] Suppose
$m\geq1,\ 1\leq s\leq2,\  r_+\geq1$.
\begin{itemize}
    \item[(i)]  If
$\widetilde G=\widetilde{\Spin}(2p+1,2p-1)$ with central character \(\chi_1\), we choose \(\gamma\) such that
\begin{align*}
\theta_\gamma(x_{2p-3})&=-x_{2p-2},
&
\theta_\gamma(x_{2p-2})&=-x_{2p-3},
\\
\theta_\gamma(x_{2p-1})&=x_{2p-1},
&
\theta_\gamma(x_{2p})&=-x_{2p}.
\end{align*}
\item[(ii)] If
$\widetilde G = \widetilde{\Spin}(2p+1,2p-1)
$ with central character \(\chi_2\), we choose \(\gamma\) such that
\begin{align*}
\theta_\gamma(x_{2p-3})&=-x_{2p-2},
&
\theta_\gamma(x_{2p-2})&=-x_{2p-3},
\\
\theta_\gamma(x_{2p-1})&=-x_{2p-1},
&
\theta_\gamma(x_{2p})&=x_{2p}.
\end{align*}
\item[(iii)] If
$\widetilde G=\widetilde{\Spin}(2p+2,2p)$ or $\widetilde{\Spin}(2p+3,2p-1)$, 
we choose \(\gamma\) such that
\begin{align*}
\theta_\gamma(x_{2p-2})&=-x_{2p-1},
&
\theta_\gamma(x_{2p-1})&=-x_{2p-2},
\\
\theta_\gamma(x_{2p})&=-x_{2p},
&
\theta_\gamma(x_{2p+1})&=x_{2p+1}.
\end{align*}
\end{itemize}
\item[(c)] Suppose
$\widetilde G=\widetilde{\Spin}(2p+2,2p),$ with
$s=0,\  m\geq1,\  r_-\geq1.$
In particular, \(r_+\geq2\). We choose \(\gamma\) such that
\begin{align*}
\theta_\gamma(x_{2p-2})&=-x_{2p-1},
&
\theta_\gamma(x_{2p-1})&=-x_{2p-2},
\\
\theta_\gamma(x_{2p})&=x_{2p},
&
\theta_\gamma(x_{2p+1})&=x_{2p+1}.
\end{align*}

\end{itemize}

\begin{lemma}\label{l:x-stab}
Let $\widetilde G\in
\{
\widetilde{\Spin}(2p+1,2p-1),
\widetilde{\Spin}(2p+2,2p),
\widetilde{\Spin}(2p+3,2p-1)
\},$ and let
\(\gamma=\gamma_{r_+,r_-,m,s}\)
be one of the representatives chosen above. Then the following
elements belong to the cross stabilizer \(W_\gamma\).

\begin{itemize}

\item[(a)] Suppose \(s\geq3\).
\begin{itemize}
    \item[(i)] If
    $\widetilde G\in\{
\widetilde{\Spin}(2p+2,2p),
\widetilde{\Spin}(2p+3,2p-1)\}$ or 
$\widetilde G=\widetilde{\Spin}(2p+1,2p-1)$ with central character $\chi_1$,
then $$s_{2p-2,2p}
\in
W_\gamma^r(\rho/2)
\subseteq W_\gamma.$$
\item[(ii)] If
$\widetilde G=\widetilde{\Spin}(2p+1,2p-1)$ with central character $\chi_2$,
then
$$s_{2p-3,2p-1}
\in
W_\gamma^r(\rho/2)
\subseteq W_\gamma.$$
\end{itemize}
\item[(b)] Suppose
$m\geq1,\  1\leq s\leq2,\ r_+\geq1.$
\begin{itemize}
    \item[(i)] If
$\widetilde G=\widetilde{\Spin}(2p+1,2p-1),$
with either central character \(\chi_1\) or \(\chi_2\), then
\[
w=
s_{2p-3,2p-1}s_{\overline{2p-3,2p-1}}
s_{2p-2,2p}s_{\overline{2p-2,2p}}
\in
W_\gamma^C(\rho/2)^\theta
\subseteq W_\gamma.
\]

\item[(ii)] If
$\widetilde G=
\widetilde{\Spin}(2p+2,2p)
$ or  $\widetilde{\Spin}(2p+3,2p-1),$
then
\[
w=
s_{2p-2,2p}s_{\overline{2p-2,2p}}
s_{2p-1,2p+1}s_{\overline{2p-1,2p+1}}
\in
W_\gamma^C(\rho/2)^\theta
\subseteq W_\gamma.
\]
\end{itemize}
\item[(c)] Suppose
$\widetilde G=\widetilde{\Spin}(2p+2,2p),
\  s=0,\  m\geq1,\ 
r_-\geq1.$
Then
\[
w=
s_{2p-2,2p}s_{\overline{2p-2,2p}}
s_{2p-1,2p+1}s_{\overline{2p-1,2p+1}}
\in
W_\gamma^C(\rho/2)^\theta
\subseteq W_\gamma.
\]

\end{itemize}

Here \(s_{i,j}\) denotes the reflection
\(s_{e_i-e_j}\), whereas
\(s_{\overline{i,j}}\) denotes the reflection
\(s_{e_i+e_j}\).

\begin{proof}
Case (a) is immediate, since
\(e_{2p-2}-e_{2p}\), or respectively
\(e_{2p-3}-e_{2p-1}\), is real for the chosen parameter.

For cases (b) and (c), by the description of the parameters in
Section~\ref{s:parameters}, it suffices to verify that the indicated
element \(w\) preserves the Cartan involution, namely
$\theta_\gamma w=w\theta_\gamma.$

We verify this in case (b) for
\(\widetilde G=\widetilde{\Spin}(2p+1,2p-1)\)
with central character \(\chi_1\); the other cases are similar.
We have
\begin{align*}
&\theta_\gamma w
(\ldots,x_{2p-3},x_{2p-2},x_{2p-1},x_{2p})
\\
&\qquad
=
\theta_\gamma
(\ldots,-x_{2p-3},-x_{2p-2},-x_{2p-1},-x_{2p})
\\
&\qquad
=
(\ldots,x_{2p-2},x_{2p-3},-x_{2p-1},x_{2p}),
\end{align*}
whereas
\begin{align*}
&w\theta_\gamma
(\ldots,x_{2p-3},x_{2p-2},x_{2p-1},x_{2p})
\\
&\qquad
=
w(\ldots,-x_{2p-2},-x_{2p-3},x_{2p-1},-x_{2p})
\\
&\qquad
=
(\ldots,x_{2p-2},x_{2p-3},-x_{2p-1},x_{2p}).
\end{align*}
Thus
$\theta_\gamma w=w\theta_\gamma,$
and hence
$w\in W_\gamma^C(\rho/2)^\theta.$
\end{proof}
\end{lemma}

\begin{lemma}\label{l:D-ruleout}
Let $\widetilde G\in
\{
\widetilde{\Spin}(2p+1,2p-1),
\widetilde{\Spin}(2p+2,2p),
\widetilde{\Spin}(2p+3,2p-1)
\},$
and let
$\widetilde H=\widetilde H^{r_+,r_-,m,s}$
be a Cartan subgroup. Choose
\(\gamma=\gamma_{r_+,r_-,m,s}\)
as above. Then \(\gamma\) fails to satisfy \eqref{e:key} in each of
the following cases:
\begin{itemize}
\item[(a)] \(s\geq3\);
\item[(b)] \(m\geq1\), \(1\leq s\leq2\), and \(r_+\geq1\);
\item[(c)] \(\widetilde G=\widetilde{\Spin}(2p+2,2p)\),
\(s=0\), \(m\geq1\), and \(r_-\geq1\).
\end{itemize}

\begin{proof}
We exhibit in each case an element \(w\in W_\gamma\) for which
\[
\epsilon_\gamma(w)
\neq
\operatorname{sgn}_{W(\rho/2)}(w).
\]

\smallskip

\noindent
\textit{Case (a).}
Suppose \(s\geq3\).

First consider $\widetilde G=\widetilde{\Spin}(2p+2,2p),
\ \widetilde{\Spin}(2p+3,2p-1)$, or 
$\widetilde G=\widetilde{\Spin}(2p+1,2p-1)$
with central character \(\chi_1\).

By Lemma~\ref{l:x-stab},
$w=s_{2p-2,2p}\in W_\gamma.$
We may write
$w=s_{\beta_3}s_{\beta_2}s_{\beta_1},$
where
\[
\beta_1=e_{2p-2}-e_{2p-1},\qquad
\beta_2=e_{2p-2}-e_{2p},\qquad
\beta_3=e_{2p-1}-e_{2p}.
\]
All three roots are real for their respective preceding parameters.
Hence
\[
m(\gamma,w)=0,
\qquad
\epsilon_\gamma(w)=1.
\]
On the other hand,
$\operatorname{sgn}_{W(\rho/2)}(w)=-1.$
Thus \eqref{e:key} fails.

Now consider
$\widetilde G=\widetilde{\Spin}(2p+1,2p-1)$
with central character \(\chi_2\). By Lemma~\ref{l:x-stab},
$w=s_{2p-3,2p-1}\in W_\gamma.$
Write
$w=s_{\beta_3}s_{\beta_2}s_{\beta_1},$
where
\[
\beta_1=e_{2p-3}-e_{2p-2},\qquad
\beta_2=e_{2p-3}-e_{2p-1},\qquad
\beta_3=e_{2p-2}-e_{2p-1}.
\]
Again all three roots are real for their respective preceding
parameters, so
\[
\epsilon_\gamma(w)=1,
\qquad
\operatorname{sgn}_{W(\rho/2)}(w)=-1.
\]
Hence \eqref{e:key} fails.

\smallskip

\noindent
\textit{Case (b).}
Suppose
$m\geq1,\  1\leq s\leq2,\  r_+\geq1.$

First let
$\widetilde G=\widetilde{\Spin}(2p+1,2p-1),$ with central character \(\chi_1\) or \(\chi_2\). By
Lemma~\ref{l:x-stab},
\[
w=
s_{2p-3,2p-1}s_{\overline{2p-3,2p-1}}
s_{2p-2,2p}s_{\overline{2p-2,2p}}
\in W_\gamma.
\]
We decompose
\[
w=s_{\beta_{20}}s_{\beta_{19}}\cdots s_{\beta_1},
\]
where the roots \(\beta_j\), in order, are
\begin{align*}
&e_{2p-1}+e_{2p},
e_{2p-1}-e_{2p},
e_{2p-3}+e_{2p-2},
e_{2p-3}+e_{2p-1},
e_{2p-2}+e_{2p-1},
\\
&e_{2p-3}+e_{2p},
e_{2p-3}-e_{2p},
e_{2p-2}+e_{2p-1},
e_{2p-3}-e_{2p-1},
e_{2p-3}+e_{2p-2},
\\
&e_{2p-2}+e_{2p-1},
e_{2p-2}-e_{2p},
e_{2p-1}+e_{2p},
e_{2p-2}-e_{2p-1},
e_{2p-2}+e_{2p-1},
\\
&e_{2p-1}-e_{2p},
e_{2p-2}+e_{2p},
e_{2p-2}+e_{2p-1},
e_{2p-1}-e_{2p},
e_{2p-1}+e_{2p}.
\end{align*}
For either central character \(\chi_1\) or \(\chi_2\),
\(\beta_3\) is the only imaginary root for its respective preceding
parameter. Therefore
\[
m(\gamma,w)=1,
\qquad
\epsilon_\gamma(w)=-1.
\]
Since \(w\) is expressed as a product of \(20\) simple reflections,
$\operatorname{sgn}_{W(\rho/2)}(w)=1.$ Thus \eqref{e:key} fails.

For
$\widetilde G=
\widetilde{\Spin}(2p+2,2p)$ or $\widetilde{\Spin}(2p+3,2p-1),$
the argument is analogous. Using the element
\[
w=
s_{2p-2,2p}s_{\overline{2p-2,2p}}
s_{2p-1,2p+1}s_{\overline{2p-1,2p+1}}
\]
from Lemma~\ref{l:x-stab}, the corresponding decomposition has
\(m(\gamma,w)\) odd, whereas
$$\operatorname{sgn}_{W(\rho/2)}(w)=1.$$
Hence \eqref{e:key} again fails.

\smallskip

\noindent
\textit{Case (c).}
Suppose
$\widetilde G=\widetilde{\Spin}(2p+2,2p),
\ s=0,\qquad
m\geq1,\qquad
r_-\geq1.$ 
By Lemma~\ref{l:x-stab},
\[
w=
s_{2p-2,2p}s_{\overline{2p-2,2p}}
s_{2p-1,2p+1}s_{\overline{2p-1,2p+1}}
\in W_\gamma.
\]
Write
\[
w=s_{\beta_{20}}s_{\beta_{19}}\cdots s_{\beta_1},
\]
where the roots \(\beta_j\), in order, are
\begin{align*}
&e_{2p}+e_{2p+1},
e_{2p}-e_{2p+1},
e_{2p-2}+e_{2p-1},
e_{2p-2}+e_{2p},
e_{2p-1}+e_{2p},
\\
&e_{2p-2}+e_{2p+1},
e_{2p-2}-e_{2p+1},
e_{2p-1}+e_{2p},
e_{2p-2}-e_{2p},
e_{2p-2}+e_{2p-1},
\\
&e_{2p-1}+e_{2p},
e_{2p-1}-e_{2p+1},
e_{2p}+e_{2p+1},
e_{2p-1}-e_{2p},
e_{2p-1}+e_{2p},
\\
&e_{2p}-e_{2p+1},
e_{2p-1}+e_{2p+1},
e_{2p-1}+e_{2p},
e_{2p}-e_{2p+1},
e_{2p}+e_{2p+1}.
\end{align*}
The roots
\[
\beta_1,\ \beta_2,\ \beta_3,\ \beta_{13},\
\beta_{16},\ \beta_{19},\ \beta_{20}
\]
are imaginary for their respective preceding parameters. Thus
\[
m(\gamma,w)=7,
\qquad
\epsilon_\gamma(w)=-1.
\]
On the other hand,
\[
\operatorname{sgn}_{W(\rho/2)}(w)=1.
\]
Hence \eqref{e:key} fails.
\end{proof}
\end{lemma}

Theorem~\ref{t:small-exhaustion} follows from the following
proposition.

\begin{prop}\label{p:RD2}
Fix a \(\rho/2\)-compatible genuine central character \(\chi\) of
\(\widetilde G\). Then
\[
\left|
\prod\nolimits_{\rho/2}^{s}(\widetilde G)_\chi
\right|
=
\begin{cases}
1,
&
\widetilde G=\widetilde{\Spin}(2p+1,2p-1),
\\[1mm]
3,
&
\widetilde G=\widetilde{\Spin}(2p+2,2p),
\\[1mm]
1,
&
\widetilde G=\widetilde{\Spin}(2p+3,2p-1).
\end{cases}
\]

\begin{proof}
By Lemma~\ref{l:D-ruleout}, the Cartan subgroups not ruled out in
the three cases are respectively
\[
\begin{array}{c|c}
\widetilde G
&
\text{Cartan subgroups not ruled out}
\\
\hline
\widetilde{\Spin}(2p+1,2p-1)
&
\widetilde H^{p,p-1,0,1}
\\[1mm]
\widetilde{\Spin}(2p+2,2p)
&
\widetilde H^{p,p-1,0,2},\
\widetilde H^{p+1,p,0,0},\
\widetilde H^{1,0,p,0}
\\[1mm]
\widetilde{\Spin}(2p+3,2p-1)
&
\widetilde H^{p+1,p-1,0,1}.
\end{array}
\]
By Remark~\ref{r:orbit}, after fixing \(\chi\), each of these Cartan
subgroups contributes at most one cross-action orbit satisfying
\eqref{e:key}. Hence
\[
\left|
\prod\nolimits_{\rho/2}^{s}(\widetilde G)_\chi
\right|
\leq
1,\ 3,\ 1
\]
in the three cases, respectively.

On the other hand, Proposition~\ref{p:RD1} gives
\[
\mathcal R_{\rho/2}(\widetilde G)_\chi
\subseteq
\prod\nolimits_{\rho/2}^{s}(\widetilde G)_\chi,
\]
and Definition~\ref{d:list} gives
\[
\left|
\mathcal R_{\rho/2}(\widetilde G)_\chi
\right|
=
1,\ 3,\ 1,
\]
respectively. The asserted equalities follow.
\end{proof}
\end{prop}

\section{Lift of the Trivial Representation}

\subsection{Preliminaries on lifting}

We recall the facts about Kazhdan--Patterson lifting that will be used below. We refer the reader to \cite{AHe10} and \cite[Sections 3 and 4]{Ts23} for the construction of the transfer factor and for the detailed definitions.

Let $G$ be a real form of a connected, simply connected, semisimple complex group of simply laced type, and let $\tu G$ be its nonlinear double cover. Let $p:\tu G\to G$ be the covering map. If $\pi$ is a stable admissible representation of $G$, the Kazhdan--Patterson lifting operator defines a genuine virtual representation of $\tu G$, denoted
$\Lift_G^{\tu G}(\pi).$

At the level of characters, this is given by
\begin{equation*}
\Lift_G^{\tu G}(\Theta_\pi)(\tu g)
=
\sum_{ \{h\in G\mid h^2=p(\tu g)\}}
\Delta(h,\tu g)\Theta_\pi(h),
\end{equation*}
where $\Delta(h,\tu g)$ is the canonical transfer factor.

If
\begin{equation}\label{e:lift}
\Lift_G^{\tu G}(\pi)
=\sum_{\tu\pi} a_{\tu\pi}\tu\pi,
\end{equation}
where $\tu\pi$ runs over irreducible genuine admissible representations of $\tu G$, we define
\begin{equation*}
\LLift (\pi) =
\{\tu\pi\mid a_{\tu\pi}\neq 0 \text{ in \eqref{e:lift}}\}
\end{equation*}
as a set.  Thus $\LLift(\pi)$ is the set of irreducible genuine constituents occurring in the lift.
\medskip 

We will use the lifting operator on stable sums of standard modules. Let
\begin{equation*}
\gamma=(H,\Gamma,\overline\gamma)\in CD(G,H)
\end{equation*}
be a modified regular character of $G$. Write $H=TA$, let $M=\operatorname{Cent}_G(A)$, and let $W_i$ be the Weyl group of the imaginary root system for $H$. We define the stable standard module attached to $\gamma$ by
\begin{equation*}
I_G^{\mathrm{st}}(\gamma)
=
\sum_{w\in W(M,H)\backslash W_i} I_G(w\gamma).
\end{equation*}

\begin{thm}\label{t:lift-stable-standard}
\emph{(\cite[Corollary 19.8]{AHe10})}
Let $\gamma=(H,\Gamma,\overline\gamma)\in CD(G,H)$ be a modified regular character. Let
\begin{equation*}
\{\tu\gamma_1,\ldots,\tu\gamma_n\}
\end{equation*}
be the set of constituents of $\Lift_G^{\tu G}(w\gamma)$ as $w$ runs over $W_i$, counted without multiplicity. Then
\begin{equation*}
\Lift_G^{\tu G}(I_G^{\mathrm{st}}(\gamma))
=
C(H)\sum_{i=1}^{n} I_{\tu G}(\tu\gamma_i),
\end{equation*}
where
\begin{equation*}
C(H)=\frac{c(H)}{c(H_s)},
\qquad
c(H)=|H_2^0|\cdot |H/Z_0(H)|^{1/2}.
\end{equation*}
Here $H_s$ is the maximally split Cartan subgroup of $G$, $H_2^0$ is the subgroup of elements of order $2$ in the identity component of $H$, and
\begin{equation*}
Z_0(H)=p(Z(\tu H)).
\end{equation*}
In particular, $C(H_s)=1$.
\end{thm}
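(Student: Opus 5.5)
Since this is \cite[Corollary 19.8]{AHe10}, I would reconstruct the argument from the character formula for $\Lift_G^{\tu G}$ rather than work with modules. The plan has three reductions: to the Levi subgroup $M$, to the relatively compact Cartan $H$ of $M$, and to a count of square roots in $H$.

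\emph{Reduction to $M$.} Both $I_G^{\mathrm{st}}(\gamma)$ and the $I_{\tu G}(\tu\gamma_i)$ are parabolically induced from $P=MN$ and $\tu P=\tu M N$. I would first show that lifting commutes with induction:
\[
\Lift_G^{\tu G}\circ\Ind_P^G=\Ind_{\tu P}^{\tu G}\circ\Lift_M^{\tu M}.
\]
For this, I would write the induced character on a Cartan $\tu H'$ using the Harish-Chandra/van den Ban induced-character formula. I would then use two facts:
\begin{itemize}
\item[(i)] the squaring map $h\mapsto h^2$ is compatible with the Weyl denominators, since $|D(h^2)|^{1/2}$ and $|D(h)|^{1/2}$ differ by an explicit factor absorbed in $\Delta(h,\tu g)$;
\item[(ii)] the transfer factor for $\tu G$ restricts to that of $\tu M$ up to the ratio of the normalizing constants $c(\cdot)$.
\end{itemize}
This reduces the statement to the case where $H$ is relatively compact in $M$, so that $I_M^{\mathrm{st}}(\gamma)$ is a stable sum of relative discrete series of $M$ (with $A$ central).

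\emph{Lifting the stable discrete series.} On the compact part $T$, the stable discrete series character is, by Harish-Chandra's formula, the sum
\[
\frac{\sum_{w\in W_i}\epsilon(w)e^{w\overline\gamma}}{\text{Weyl denominator}}.
\]
Inserting this into $\sum_{h^2=p(\tu g)}\Delta(h,\tu g)\Theta(h)$ replaces $\overline\gamma$ by $\overline\gamma/2$ up to $\rho$-shifts, which is exactly how $\tu\gamma$ is attached to $\gamma$. The resulting sum over $h$ runs over a coset of $H_2^0$ together with representatives of $H/Z_0(H)$. On $Z(\tu H)$ this yields a sum of genuine characters, each extending to irreducible genuine representations $\Gamma$ of $\tu H$ of dimension $|\tu H/Z(\tu H)|^{1/2}$ by Proposition~\ref{prop:genuine-character-determined}. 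Matching the result with the genuine discrete series characters of $\tu M$ gives each $\tu\gamma_i$ with the same coefficient $c(H)$, because the $W_i$-orbit absorbs the duplications. The duplications are also why the constituents are counted without multiplicity.

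\emph{Normalization.} Dividing by $c(H_s)$ comes from the normalization of $\Delta$, which is fixed by requiring $C(H_s)=1$ on the split Cartan, where everything is a principal series computation.

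The main obstacle I expect is the bookkeeping of constants: showing that every $\tu\gamma_i$ occurs with exactly the same coefficient $C(H)$. This requires identifying the fibers of $h\mapsto h^2$ modulo $Z_0(H)$ with $H_2^0$-torsors and checking that the transfer factor is constant in sign along them. I would first verify this on $\SL(2,\bbR)$ and its rank-one Levis, and then use the product structure of $H$ to reduce to the rank-one case.
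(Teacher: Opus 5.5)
The paper gives no proof of this statement: it is quoted from \cite[Corollary~19.8]{AHe10} and used as a black box. So your outline has to stand on its own. Its architecture is reasonable: use compatibility of lifting with parabolic induction, then lift a stable relative discrete series on its relatively compact Cartan. But the one quantitative claim of the theorem, the constant $C(H)$, is exactly where the sketch goes wrong.

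\emph{The fiber count is wrong.} The fiber $\{h\in H: h^2=p(\tu g)\}$, when nonempty, is a coset of the full group $H_2=\{h\in H: h^2=1\}$. It is not a coset of $H_2^0$, and $H/Z_0(H)$ plays no role in it. Squaring maps $H$ onto $H^0$, and $H/H^0$ is an elementary abelian $2$-group, so $|H_2|=|H_2^0|\,|H/H^0|$. Already for the split Cartan $H=\bbR^\times$ of $\SL(2,\bbR)$ you have $H_2^0=1$ and $Z_0(H)=H$, yet every $a>0$ has the two square roots $\pm\sqrt a$. The extra square roots lie in other components of $H$. There $\Gamma$ carries the sign data $\varepsilon$ of Notation~\ref{n:linear-param}, which $\overline\gamma$ does not see. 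A Harish-Chandra formula written only in terms of $e^{w\overline\gamma}$ does not handle these terms.

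\emph{Where $|H/Z_0(H)|^{1/2}$ actually comes from.} It cannot come from counting square roots. It is $\dim\Gamma$ for the irreducible genuine representations of the generally nonabelian group $\tu H$, and it enters through $\operatorname{tr}\Gamma|_{Z(\tu H)}=|H/Z_0(H)|^{1/2}\chi_\Gamma$. A bookkeeping that works:
\begin{itemize}
\item On $\tu H$ the lift is supported on $p^{-1}(H^0)$, since only elements of $H^0$ are squares in $H$.
\item To match it you need all $|Z_0(H)/H^0|$ genuine characters of $Z(\tu H)$ with the given differential. This is why several central characters occur, e.g.\ both $\chi_2$ and $\chi_4$ for $\tu{\Spin}(2p+2,2p)$.
\item Using $H^0\subseteq Z_0(H)$, one gets $|H_2|/(\dim\Gamma\cdot|Z_0(H)/H^0|)=|H_2^0|\,|H/Z_0(H)|^{1/2}=c(H)$.
\end{itemize}
Your implicit count $|H_2^0|\,|H/Z_0(H)|$ misses the factor $|Z_0(H)/H^0|$. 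With it you lose the mechanism that produces the several $\tu\gamma_i$.

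\emph{The rank-one reduction fails.} For the same reason, you cannot reduce to rank one via a product decomposition of $H$: $\tu H$ is not a product of rank-one covers, because lifts of distinct component-group generators need not commute. For both Cartans of $\tu{\SL}(2,\bbR)$ the preimage is abelian, so $|H/Z_0(H)|=1$. For the split Cartan $H\cong(\bbR^\times)^2$ of $\tu{\SL}(3,\bbR)$, the preimage of $H/H^0$ is the quaternion group $Q_8$, so $|H/Z_0(H)|^{1/2}=2$. No factorwise computation produces this factor.

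\emph{Two smaller points.}
\begin{itemize}
\item $C(H_s)=1$ is a tautology of the definition of $C(H)$. What must be proved is that the lifting operator, with its transfer factor normalized once in its definition, yields $c(H)/c(H_s)$ for every $H$.
\item Values on the relatively compact Cartan do not by themselves determine a virtual character of $\tu M$; induced characters vanish there. You need Harish-Chandra's uniqueness theorem for invariant eigendistributions with $|D|^{1/2}\Theta$ bounded, plus a check that the lift preserves this bound.
\end{itemize}
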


We will also use the following consequence of lifting and coherent continuation.

\begin{thm} \cite[Theorem 4.1]{Ts23}  \label{t:lift-trivial-small} 
Let $\bbC$ be the trivial representation of $G$. If
$\tu\pi\in \Lift (\bbC),$ then $\tu\pi$ has infinitesimal character $\rho/2$ and maximal $\tau$-invariant. Equivalently,
\begin{equation*}
\LLift (\bbC)
\subseteq
\prod\nolimits_{\rho/2}^{s}(\tu G).
\end{equation*}
\end{thm}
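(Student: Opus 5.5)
The plan is to combine two ingredients: the character formula for the trivial representation as a stable combination of standard modules, and the compatibility of the lifting operator with coherent continuation. Together these show that $\Lift_G^{\tu G}(\bbC)$ is a genuine virtual character with infinitesimal character $\rho/2$ that is anti-invariant under the integral Weyl group. Maximality of the $\tau$-invariant of each constituent is then read off by translating to walls.

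\emph{Step 1 (infinitesimal character).} Write the character of $\bbC$ as an integer combination of stable standard modules $I_G^{\mathrm{st}}(\gamma)$, with each $\overline\gamma$ conjugate to $\rho$ (Zuckerman's formula, grouped into stable sums). By Theorem~\ref{t:lift-stable-standard}, $\Lift_G^{\tu G}$ sends each $I_G^{\mathrm{st}}(\gamma)$ to a multiple of a sum of genuine standard modules $I_{\tu G}(\tu\gamma_i)$. The lifting formula $\sum_{h^2=p(\tu g)}\Delta(h,\tu g)\Theta(h)$ is governed by the squaring map, whose differential is multiplication by $2$ on $\fk h$. Hence each $\tu\gamma_i$ has differential parameter $\overline\gamma/2$, which is conjugate to $\rho/2$. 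Every constituent of the lift therefore has infinitesimal character $\rho/2$.

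\emph{Step 2 (anti-invariance).} Coherent continuation on $G$ at $\rho$ satisfies $w\cdot\Theta_{\bbC}=\operatorname{sgn}(w)\Theta_{\bbC}$ for $w\in W$. I would show that lifting intertwines coherent families: if $\Theta_{\lambda+\mu}$ is a coherent family on $G$, then $\Lift(\Theta_{2(\lambda+\mu)})$, viewed as a function of $\lambda+\mu$, is a coherent family for $\tu G$ based at $\lambda$. This follows from the character formula, since the squaring map doubles weights. Because $W(\rho/2)\subset W$ and $\operatorname{sgn}_W$ restricts to $\operatorname{sgn}_{W(\rho/2)}$, we get
\[
w\cdot\Theta_{\tu{}}=\operatorname{sgn}(w)\,\Theta_{\tu{}}\qquad\text{for all } w\in W(\rho/2),
\]
where $\Theta_{\tu{}}=\Lift_G^{\tu G}(\bbC)$. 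In particular $s_\alpha\cdot\Theta_{\tu{}}=-\Theta_{\tu{}}$ for every $\alpha\in\Pi(\rho/2)$. This intertwining, especially matching the translation datum $F_\mu$ on the cover with $F_{2\mu}$ on $G$ together with the transfer factors, is the step I expect to be the main obstacle. I would first verify it on standard modules via Theorem~\ref{t:lift-stable-standard}, where both sides are explicit in $\overline\gamma$.

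\emph{Step 3 (maximal $\tau$-invariant).} Write $\Theta_{\tu{}}=\sum a_{\tu\pi}\tu\pi$ as in \eqref{e:lift} and fix $\alpha\in\Pi(\rho/2)$. Since $s_\alpha$ acts on $\Theta_{\tu{}}$ by $-1$, translation to the $\alpha$-wall kills it, because $\Psi_\alpha\circ s_\alpha=\Psi_\alpha$ on coherent families. Therefore
\[
0=\Psi_\alpha(\Theta_{\tu{}})=\sum_{\alpha\notin\tau(\tu\pi)}a_{\tu\pi}\Psi_\alpha(\tu\pi).
\]
By Vogan's results, valid for the nonlinear cover via \cite{RT05}, $\Psi_\alpha(\tu\pi)$ is irreducible and nonzero when $\alpha\notin\tau(\tu\pi)$, and distinct such $\tu\pi$ give distinct irreducibles. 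Hence $a_{\tu\pi}=0$ whenever $\alpha\notin\tau(\tu\pi)$. Running over all $\alpha$ shows that every $\tu\pi\in\LLift(\bbC)$ has $\tau(\tu\pi)=\Pi(\rho/2)$. Together with Step 1 this gives $\LLift(\bbC)\subseteq\prod\nolimits^s_{\rho/2}(\tu G)$.
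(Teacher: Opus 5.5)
Your proposal is correct and follows essentially the same route as the cited proof \cite[Theorem~4.1]{Ts23}: the lift intertwines coherent continuation, and $\bbC$ transforms by the sign character of $W$. Hence $\Lift_G^{\tu G}(\bbC)$ is sign-isotypic for $W(\rho/2)$, which forces every constituent to have maximal $\tau$-invariant. For the step you flag, the right matching is not $F_\mu\leftrightarrow F_{2\mu}$ but $F\otimes\Lift_G^{\tu G}(\Theta)=\Lift_G^{\tu G}(\psi^2F\otimes\Theta)$, where $\psi^2F$ is the virtual character $g\mapsto\Theta_F(g^2)$, whose weights are $2\nu$ for $\nu$ a weight of $F$; this identity follows directly from $h^2=p(\tu g)$ in the lifting formula and gives the coherence of $\mu\mapsto\Lift_G^{\tu G}(\Theta(\rho+2\mu))$.
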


\subsection{Lift of the trivial representation for type $D$}
Throughout this subsection, $$\tu G\in\{\tu\Spin(2p+1,2p-1),\tu\Spin(2p+2,2p), \tu\Spin(2p+3, 2p-1)\}.$$ We will show which  genuine small representations with infinitesimal character $\rho/2$ occur in $\Lift_G^{\tu G}(\bbC)$.

\begin{definition} \label{d:param}
Fix a \(\rho/2\)-compatible genuine central character \(\chi\).
For the set
\(\mathcal R_{\rho/2}(\widetilde G)_\chi\)
defined in Definition~\ref{d:list}, let
\[
\mathcal P_{\rho/2}(\widetilde G)_\chi
=
\left\{
\widetilde\gamma \mid
J(\widetilde\gamma)
\in
\mathcal R_{\rho/2}(\widetilde G)_\chi
\right\}
\]
be the corresponding set of \(\rho/2\)-regular characters.

Define the following sets of strongly orthogonal roots:
\begin{align*}
S_1 &=\{e_1-e_2, e_3-e_4, \dots, e_{2p-3}-e_{2p-2}\},\\
S_2 &=\{e_1-e_2, e_3-e_4, \dots, e_{2p-3}-e_{2p-2}, e_{2p-1}-e_{2p}\},\\
S_3 &=\{e_1-e_2, e_3-e_4, \dots, e_{2p-3}-e_{2p-2}, e_{2p-1}+e_{2p}\}.
\end{align*}
 Then $\mathcal P_{\rho/2}(\tu G)_\chi$ is described as follows.

\begin{itemize}
    \item[(a)] If $\tu G=\tu\Spin (2p+1,2p-1)$, let $\tu\gamma_\sharp =\tu \gamma(\{2p-1\}^*)$. Then  
    $$\mathcal P_{\rho/2}(\tu G)_\chi=
    \begin{cases}
        \{c_{S_1}(\tu\gamma_\sharp)\} & \text{ if } \chi = \chi_1, \\ \{\tu\gamma(\{2p\}^*)\} & \text{ if } \chi=\chi_2.
      \end{cases}
    $$
  \item[(b)] If $\tu G=\tu\Spin (2p+2,2p)$,
  let  $\tu\gamma_\sharp= \tu\gamma(\{2p+1\}^*)_\chi$. Then 
    $$\mathcal P_{\rho/2}(\tu G)_\chi=\{\tu \gamma_\sharp, c_{S_2}(\tu \gamma_\sharp), c_{S_3}(\tu \gamma_\sharp)\}.$$
    \item[(c)] If $\tu G=\tu\Spin (2p+3,2p-1)$, let $\tu\gamma_\sharp =\tu \gamma(\{2p-1, 2p+1\}^*)$. Then 
    $$\mathcal P_{\rho/2}(\tu G)_\chi=\{c_{S_1}(\tu\gamma_\sharp)\}.$$
\end{itemize}

\end{definition}

\begin{notation} \label{n:linear-param}
We need a notation for the $\rho$-regular characters of the linear group $G$.
In Section \ref{s:parameters}, we used $\tu\gamma(\mathcal S)$ to denote a
$\rho/2$-regular character of $\tu G$. For the linear group, we will use the
notation
\begin{equation}\label{e:linear-param}
\gamma(\mathcal S;\varepsilon)
\end{equation}
to denote a $\rho$-regular character
$\gamma=(H,\Gamma,\overline\gamma)$
of $G$. Here the symbol $\mathcal S$ has the same meaning as in Section
\ref{s:parameters}, while
\[
\varepsilon=(\varepsilon_1,\ldots,\varepsilon_s)
=
\Gamma|_{H/H^0}
\]
records the character of the component group of $H$. After choosing the
standard generators of $H/H^0$, we write each
\[
\varepsilon_i\in\{\pm1\}.
\]
These signs are attached to the real split coordinates which are not contained
in the set $\mathcal S$. This extra datum is necessary for the linear group
because it determines the parity or nonparity of real roots.
\end{notation}

The next lemma gives the $\rho$-regular character whose Langlands quotient is
the trivial representation for each linear group under consideration.

\begin{lemma}\label{l:triv-param}
Let
\[
G\in
\{\Spin(2p+1,2p-1),\Spin(2p+2,2p),\Spin(2p+3,2p-1)\}.
\]
Let $\gamma_0$ be the $\rho$-regular character of $G$ such that
$J(\gamma_0)\simeq \bbC.$

Then, using the notation of \eqref{e:linear-param}, we have the following.

\begin{itemize}
    \item[(a)] If $G=\Spin(2p+1,2p-1)$, then
    \[
    \gamma_0=\gamma(\{2p\}^*;\varepsilon_{\mathrm{triv}}),
    \qquad
    \varepsilon_{\mathrm{triv}}
    =
    \underbrace{(+1,\ldots,+1)}_{2p-1}.
    \]

    \item[(b)] If $G=\Spin(2p+2,2p)$, then
    \[
    \gamma_0=\gamma(\{2p+1\}^*;\varepsilon_{\mathrm{triv}}),
    \qquad
    \varepsilon_{\mathrm{triv}}
    =
    \underbrace{(+1,\ldots,+1)}_{2p}.
    \]

    \item[(c)] If $G=\Spin(2p+3,2p-1)$, then
    \[
    \gamma_0=\gamma(\{2p-1,2p+1\}^*;\varepsilon_{\mathrm{triv}}),
    \qquad
    \varepsilon_{\mathrm{triv}}
    =
    \underbrace{(+1,\ldots,+1)}_{2p-1}.
    \]
\end{itemize}

\begin{proof}
Let $H_s=T_sA_s$ be the maximally split Cartan subgroup of $G$, and let
$P_s=M_sA_sN_s$ be the corresponding minimal parabolic subgroup. The trivial
representation of $G$ is the Langlands quotient of the spherical standard
module attached to $P_s$.

In the parametrization of regular characters, the parameter
$\overline\gamma_0$ decomposes into an $M_s$-part and an $A_s$-part. The
$M_s$-part is the Harish-Chandra parameter of the representation of $M_s$
appearing in the inducing data. For the trivial representation of $G$, this is
the smallest possible Harish-Chandra parameter on the $M_s$-side. With our
choice of coordinates, these smallest Harish-Chandra parameters are
\[
0,\qquad 0,\qquad (1,0)
\]
for
\[
\Spin(2p+1,2p-1),\qquad
\Spin(2p+2,2p),\qquad
\Spin(2p+3,2p-1),
\]
respectively.

Thus, in the notation fixed in Section \ref{s:parameters}, the corresponding
symbols for the maximally split parameters are
\[
\{2p\}^*,\quad
\{2p+1\}^*,\quad
\{2p-1,2p+1\}^*,
\]
respectively.

Moreover, since the standard module is spherical, the character on the
component group of $H_s$ is trivial. Hence
\[
\Gamma_0|_{H_s/H_s^0}=1,
\]
or equivalently
\[
\varepsilon_{\mathrm{triv}}=(+1,\ldots,+1).
\]
The number of entries is the number of real split coordinates not contained in
the symbol $\mathcal S$.  Therefore, we obtain the desired parameters $\gamma_0$ as in the statement. 

\end{proof}

\end{lemma}

\begin{lemma} \label{l:triv-M}
Let $G\in \{\Spin(2p+1,2p-1), \Spin(2p+2,2p), \Spin(2p+3, 2p-1)\}$. Let $\gamma_0$ be the $\rho$-regular character of the trivial representation (see Lemma \ref{l:triv-param}). 
Consider $S\in \{ S_1, S_2, S_3\}$ as defined in Definition \ref{d:param} (depending on $G$). Let $S'\subseteq S$. Then 
$$
M(c_{S'}(\gamma_0) , \gamma_0)= (-1)^{\ell(\gamma_0) - \ell(c_{S'}(\gamma_0))}.
$$
\begin{proof}
    This can be checked by Kazhdan--Lusztig--Vogan algorithm for linear groups 
   (see \cite[Proposition 6.14]{V83a}).  
\end{proof}

\end{lemma}

In the next Lemma, we characterize the standard modules that contain $J(\tu\gamma)$ as a composition factor for some $\gamma \in \mathcal P_{\rho/2}(\tu G)$. 
The proof is parallel to that of \cite[Lemma 7.2]{Ts23}.

\begin{lemma}\label{l:comp-factor}
Let  $S_1, S_2, S_3$ and $\tu\gamma_\sharp$ as  in Definition  \ref{d:param}. 
\begin{itemize}
    \item[(1)] If $\tu G\in\{\tu\Spin(2p+1, 2p-1), \tu\Spin(2p+3, 2p-1)\}$, then $J(c_{S_1}(\tu \gamma_\sharp))$ is a composition factor of $I(\tu \gamma)$ if and only if 
    $$
\tu\gamma = c_{S'}(\tu\gamma_\sharp) 
    $$
  for some subset  $S'\subseteq S_1$. 
  \item[(2)] If $\tu G=\tu\Spin(2p+2, 2p)$, then for $j\in\{2, 3\}$, $J(c_{S_j}(\tu \gamma_\sharp))$ is a composition factor of $I(\tu \gamma)$ if and only if 
    $$
\tu\gamma = c_{S'}(\tu\gamma_\sharp) 
    $$
   for some subset $S'\subseteq S_j$. 
\end{itemize}
Moreover, in each case, the multiplicity of $J(c_{S_j}(\tu\gamma_\sharp) )$ in $I(c_{S'}(\tu\gamma_\sharp))$ is 1, that is,
$$
m(c_{S_j} (\tu\gamma_\sharp) , c_{S'}(\tu\gamma_\sharp)) =1. 
$$

\end{lemma}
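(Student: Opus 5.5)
Let $\tu\delta$ denote $c_{S_1}(\tu\gamma_\sharp)$ in part (1), resp.\ $c_{S_j}(\tu\gamma_\sharp)$ in part (2). The plan is to follow \cite[Lemma 7.2]{Ts23}: the "if" direction and the multiplicity statement come from explicit Cayley-transform combinatorics, and the "only if" direction comes from the $\tau$-invariant and support properties of $J(\tu\delta)$.

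\emph{The "if" direction and multiplicity.} Fix $S'\subseteq S_j$. The target parameter is $\tu\delta=c_{S_j\setminus S'}(c_{S'}(\tu\gamma_\sharp))$, and the roots in $S_j\setminus S'$ are strongly orthogonal real roots for $c_{S'}(\tu\gamma_\sharp)$ lying in $\Delta_{1/2}^+$. Hence they are nonintegral for $\rho/2$. For a nonintegral real root there is no parity dichotomy. The inverse Cayley transform through such a root is a single-valued operation, and the standard module $I(c_{S'}(\tu\gamma_\sharp))$ is related to $I(\tu\delta)$ through the Hecht--Schmid identity for nonintegral roots. Concretely, we would translate across nonintegral walls using Theorem~\ref{t:transfun} and Proposition~\ref{p:transtau}, which reduces each such root to a rank-one $\widetilde{\SL}(2,\bbR)$ computation. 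In that rank-one situation the nonintegral principal series has the relevant limit as a constituent with multiplicity one. Inducting on $|S_j\setminus S'|$, and using that the roots are strongly orthogonal so the rank-one steps commute, we get
\[
m(\tu\delta, c_{S'}(\tu\gamma_\sharp))=1 .
\]
Equivalently, the modified KLV polynomial at this pair is $1$, which is the genuine analogue of Lemma~\ref{l:triv-M}; for nonlinear groups this would be justified via \cite{RT05}.

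\emph{The "only if" direction.} Suppose $m(\tu\delta,\tu\gamma)\neq0$. Then $\tu\gamma$ lies in the same block and satisfies $\tu\gamma\ge\tu\delta$ in the Bruhat order, so $\tu\gamma$ sits on a Cartan at least as split as that of $\tu\delta$. By Section~\ref{s:parameters}, $\tu\gamma=c_{S''}(\tu\gamma(\mathcal S)^*)$ for some seed. To pin down the seed and $S''$, we use the $\tau$-invariant. Since $J(\tu\delta)$ has maximal $\tau$-invariant (Proposition~\ref{p:RD1}), any $I(\tu\gamma)$ containing it must satisfy $\Psi_\alpha(I(\tu\gamma))\neq0$ compatibly for all $\alpha\in\Pi(\rho/2)$. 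By the standard argument (\cite[Lemma 7.2]{Ts23}, via \cite{V79c}), this forces every integral simple root to be a descent of a fixed type for $\tu\gamma$. That condition kills any $\tu\gamma$ having an integral noncompact imaginary root, or an integral complex ascent. This leaves exactly the parameters obtained from $\tu\gamma_\sharp$ by undoing some of the Cayley transforms in $S_j$, i.e.\ $c_{S'}(\tu\gamma_\sharp)$ with $S'\subseteq S_j$. The nonintegral-wall translations of Proposition~\ref{p:transtau} preserve composition multiplicities, so they let us move to chambers where each integral simple root is simple for the parameter under test.

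\emph{Main obstacle.} The hard part will be this exclusion step, especially in the $\tu\Spin(2p+2,2p)$ case with $S_3$. There the root $e_{2p-1}+e_{2p}$ introduces the sign $\epsilon=-1$ pattern, and the Cartan $\tu H^{1,0,p,0}$ admits several competing parameters (the two signs) which must be told apart by the $\tau$-invariant computation for $e_{2p-2}+e_{2p}$ and $e_{2p-1}\pm e_{2p+1}$. We would handle this by mirroring the three-step wall crossing $\beta_1,\beta_2,\beta_3$ from the proof of Proposition~\ref{p:RD1} and checking case by case which sign survives.
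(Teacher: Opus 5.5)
\textbf{The `only if' direction.} The exclusion step does not work. Maximality of $\tau(J(\tu\delta))$ says only that $\Psi_\alpha(J(\tu\delta))=0$ for every $\alpha\in\Pi(\rho/2)$. Since $\Psi_\alpha$ is exact, this places no condition on $\Psi_\alpha(I(\tu\gamma))$, or on the root types of $\tu\gamma$, when $J(\tu\delta)$ is a constituent of $I(\tu\gamma)$. The implication that does hold goes the other way: a compact imaginary root for $\tu\gamma$ lies in the $\tau$-invariant of every constituent of $I(\tu\gamma)$.

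Theorem~\ref{t:tau} computes $\tau(J(\tu\gamma))$, the $\tau$-invariant of the Langlands quotient. That has no bearing on which standard modules contain $J(\tu\delta)$. The lemma itself shows such a criterion cannot be right. In part (1), $J(\tu\delta)$ is the only element of $\prod\nolimits^s_{\rho/2}(\tu G)_\chi$ (Proposition~\ref{p:RD2}). Hence for $S'\subsetneq S_1$, in particular for $\tu\gamma_\sharp$ itself, $J(c_{S'}(\tu\gamma_\sharp))$ does not have maximal $\tau$-invariant. Yet $I(c_{S'}(\tu\gamma_\sharp))$ contains $J(\tu\delta)$. So a conclusion that every integral simple root is a $\tau$-descent for $\tu\gamma$ would discard exactly the parameters the lemma keeps. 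The same objection applies to your plan for separating the parameters on $\tu H^{1,0,p,0}$ in the $S_3$ case.

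What is needed is an actual computation in the genuine block. Restrict to parameters with central character $\chi$ lying above $\tu\delta$ in the Bruhat order, and compute the relevant entries of the multiplicity matrix with the Renard--Trapa form of the KLV algorithm. The paper simply defers to \cite[Lemma 7.2]{Ts23}. The mechanism is visible in Lemma~\ref{l:53-m-coeff}: the exclusion comes from central character and length, and the surviving entry from \cite[Proposition 7.10]{RT00}.

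\textbf{The `if' direction and multiplicity one.} These are also not established. Crossing nonintegral walls (Theorem~\ref{t:transfun}, Proposition~\ref{p:transtau}) is an equivalence between the blocks at $\lambda$ and $s_\alpha\lambda$. It does not localize $m(\tu\delta,c_{S'}(\tu\gamma_\sharp))$ to a rank-one $\widetilde{\SL}(2,\bbR)$ computation, and strong orthogonality does not make the multiplicity in $\tu G$ a product of rank-one multiplicities.

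You can get a lower bound cheaply. Induce in stages through the Levi subgroup $\tu L$ whose real roots are $S_j\setminus S'$. The standard module of $\tu L$ attached to $c_{S'}(\tu\gamma_\sharp)$ contains the relative discrete series $J_{\tu L}(\tu\delta_L)$, and the induced character of $J_{\tu L}(\tu\delta_L)$ equals that of $I(\tu\delta)$. Exactness of induction then gives $m\geq 1$, which proves the `if' direction. The upper bound does not follow, because the other constituents $\Ind J_{\tu L}(\eta_L)$ may also contain $J(\tu\delta)$. Controlling them is precisely the KLV input.

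Note also that $m$ is not `equivalently' a KLV polynomial value. The polynomials compute $\mathbf M$, and $\mathbf m=\mathbf M^{-1}$ depends on $\mathbf M$ over the whole interval between $\tu\delta$ and $c_{S'}(\tu\gamma_\sharp)$. In Lemma~\ref{l:53-m-coeff}, for instance, $M=-1$ while $m=1$. What you need is $M(c_{S''}(\tu\gamma_\sharp),c_{S'}(\tu\gamma_\sharp))=(-1)^{|S''\setminus S'|}$ for $S'\subseteq S''\subseteq S_j$, with no other parameter in the interval contributing. Inverting this Boolean-lattice M\"obius matrix then gives $m=1$.

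Finally, `nonintegral, hence no parity dichotomy' misidentifies the mechanism. In this genuine setting the roots of $\Delta_{1/2}$ are the ones that carry Cayley transforms and a parity condition (see the real case of Theorem~\ref{t:transfun}). Integral real roots, by contrast, never satisfy parity.
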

Fix a $\rho/2$-compatible genuine central character
$\chi\in \widehat Z_{\rho/2}(\tu G).$ Let $\LLift(\bbC)_\chi$ denote the set of irreducible genuine representations
with central character $\chi$ occurring in the lift of the trivial representation.
We have the following description of $\LLift(\bbC)_\chi$.

\begin{thm}\label{t:main}
Using the notation in Definition \ref{d:list}, and fixing
$\chi\in \widehat Z_{\rho/2}(\tu G)$, the set $\LLift(\bbC)_\chi$ is described
as follows.
\begin{itemize}
    \item[(a)] If $\tu G=\tu{\Spin}(2p+1,2p-1)$, then
    \[
    \LLift(\bbC)_\chi
    =\begin{cases}
        \emptyset, & \ \chi =\chi_1\\
    \{J(\{2p\}^*)\}, & \chi=\chi_2.
    \end{cases}
    \]

    \item[(b)] If $\tu G=\tu{\Spin}(2p+2,2p)$, then
    \[
    \LLift(\bbC)_\chi
    =
    \calR_{\rho/2}(\tu G)_\chi.
    \]

    \item[(c)] If $\tu G=\tu{\Spin}(2p+3,2p-1)$, then
    \[
    \LLift(\bbC)_\chi
    =
    \emptyset.
    \]
\end{itemize}
Here the representations appearing in the right-hand side are understood to
have central character $\chi$.

In particular, 
\[
\LLift(\bbC)_\chi
=
\prod\nolimits_{\rho/2}^{s}(\tu G)_\chi
\]
for  $\tu G=\tu{\Spin}(2p+2,2p)$ and for $\tu G=\tu\Spin (2p+1,2p-1)$ with $\chi=\chi_2$. 

For $\tu G=\tu\Spin (2p+1,2p-1)$ with $\chi=\chi_1$ and for
$\tu G=\tu\Spin(2p+3,2p-1)$,
$\LLift(\bbC)_\chi$ is a proper subset of
$\prod\nolimits_{\rho/2}^{s}(\tu G)_\chi.$

Consequently,
\[
\LLift(\bbC)
=
\prod\nolimits_{\rho/2}^{s}(\widetilde G)
\]
if and only if
\(\widetilde G=\widetilde{\Spin}(2p+2,2p)\).
For \(\widetilde{\Spin}(2p+1,2p-1)\), the lift exhausts the
\(\chi_2\)-block but vanishes on the \(\chi_1\)-block, whereas for
\(\widetilde{\Spin}(2p+3,2p-1)\) the lift is empty.

\end{thm}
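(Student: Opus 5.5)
The argument follows the strategy of \cite{Ts23}. We write the trivial representation as an alternating sum of standard modules, replace each standard module by its stable version, lift term by term with Theorem~\ref{t:lift-stable-standard}, and read off the multiplicity of each small representation from Lemma~\ref{l:comp-factor}.

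By Theorem~\ref{t:lift-trivial-small} and Theorem~\ref{t:small-exhaustion}, $\LLift(\bbC)_\chi\subseteq\calR_{\rho/2}(\tu G)_\chi$. It therefore suffices to compute, for each $\tu\gamma\in\mathcal P_{\rho/2}(\tu G)_\chi$, the coefficient $a$ of $J(\tu\gamma)$ in $\Lift_G^{\tu G}(\bbC)$.

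\textbf{Step 1: expand $\bbC$.} Since $\bbC$ is stable, we write
\[
[\bbC]=\sum_{\delta} M(\delta,\gamma_0)[I(\delta)]
\]
and regroup the sum into stable standard modules $I^{\mathrm{st}}_G(\delta)$. Here $\gamma_0$ is as in Lemma~\ref{l:triv-param}.

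\textbf{Step 2: lift.} Applying Theorem~\ref{t:lift-stable-standard} gives
\[
\Lift_G^{\tu G}(\bbC)=\sum_{\delta}M(\delta,\gamma_0)\,C(H_\delta)\sum_i I(\tu\delta_i).
\]

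\textbf{Step 3: isolate the relevant standard modules.} By Lemma~\ref{l:comp-factor}, $J(c_{S}(\tu\gamma_\sharp))$ occurs only in the $I(c_{S'}(\tu\gamma_\sharp))$ with $S'\subseteq S$, and each such occurrence has multiplicity one. So only the linear parameters $c_{S'}(\gamma_0)$ with $S'\subseteq S$ contribute. Their coefficients are $(-1)^{\ell(\gamma_0)-\ell(c_{S'}(\gamma_0))}$ by Lemma~\ref{l:triv-M}, and the length changes by one with each Cayley transform. The coefficient of $J(c_S(\tu\gamma_\sharp))$ is then
\[
a=\sum_{S'\subseteq S}(-1)^{|S'|}\,C(H_{S'})\,N(S',\chi),
\]
where $N(S',\chi)$ counts the constituents of the lift of $c_{S'}(\gamma_0)$ that equal $c_{S'}(\tu\gamma_\sharp)$ with central character $\chi$.

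\textbf{Step 4: evaluate $C(H_{S'})$ and $N(S',\chi)$.} This needs two computations.

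First, $C(H)$ via $|H_2^0|$ and $|H/Z_0(H)|$. Each inverse Cayley transform by a root $e_{2k-1}\mp e_{2k}$ trades two split coordinates for a complex pair. I expect this to multiply $C$ by $1/2$, as in the split case, or to leave it unchanged; this must be checked for these particular Cartans.

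Second, which genuine characters appear in the lift. For a fixed Lie-level datum, the lift of $\gamma$ produces the genuine characters $\tu\gamma$ whose restriction to $Z(\tu G)$ is compatible with $\Gamma$ through the transfer factor. Here the key input is Lemma~\ref{l:2p+12p-1-cenchar-comp}. For $\Spin(2p+1,2p-1)$, the seed $\gamma_0=\gamma(\{2p\}^*)$ lies over the $\chi_2$-block, while the $\chi_1$ small representation is built on $\{2p-1\}^*$, which is not the Lie-level datum of any term in the character formula of $\bbC$ near the relevant Cartans. Consequently the $\chi_1$ coefficient is $0$, and on the $\chi_2$ side only $J(\{2p\}^*)$ (with $S=\emptyset$) needs checking, giving $a=C(H_s)=1$.

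For $\Spin(2p+2,2p)$ both $\chi_2$ and $\chi_4$ appear with equal weight from the most split Cartan. The alternating sum over $S'\subseteq S_2$ or $S_3$ should then collapse to a nonzero multiple, a power of $2$ up to sign, exactly as in the split case of \cite{Ts23}.

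For $\Spin(2p+3,2p-1)$ the small representation sits at $c_{S_1}(\tu\gamma_\sharp)$, and I expect cancellation: the sum $\sum_{S'\subseteq S_1}(-1)^{|S'|}C(H_{S'})N(S',\chi)$ vanishes because $C(H_{S'})N(S',\chi)$ is independent of $S'$, giving $(1-1)^{p-1}=0$ for $p\ge2$. The same mechanism also gives the $\chi_1$-vanishing in case (a), if the first explanation fails there.

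\textbf{Main obstacle.} The main obstacle is Step 4: computing $C(H)$, and the number of lifted characters per central character, for the intermediate Cartans $\tu H^{r,r',m,s}$. The distinction between a factor of $2$ per Cayley transform, which gives a nonzero sum, and a constant factor, which gives cancellation, is exactly what separates the three families. I would first compute $|H_2^0|$ and $|H/Z_0(H)|$ directly from Lemma~\ref{l:center} and Proposition~\ref{prop:genuine-character-determined}. I would then verify the outcome against the explicit low-rank case $\tu\Spin(5,3)$ before treating general $p$.
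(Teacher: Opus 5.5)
Your outline follows the paper's strategy: stable standard sums, Theorem~\ref{t:lift-stable-standard}, Lemma~\ref{l:comp-factor}, and the alternating sum $\sum_{S'\subseteq S}(-1)^{|S'|}C(H_{S'})$. The gap is that evaluating this sum is the entire content of the theorem, and you leave it as an expectation based on a mechanism that is not the right one.

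\begin{itemize}
\item \emph{Case (b).} The constants do not double along the chain. The paper finds $C(H_{S'})=1$ for all $|S'|<p$, with a single jump $C(H_{S_2})=C(H_{S_3})=2$ at the last Cartan $S^1\times(\bbC^\times)^p$. Hence $K_{S_2}=K_{S_3}=\sum_{k=0}^{p-1}(-1)^k\binom{p}{k}+2(-1)^p=(-1)^p$. This agrees numerically with $(1-2)^p$, but the doubling you posit is false on the intermediate Cartans.
\item \emph{Cases (a) and (c).} One has $C(H_{S'})=1$ for every $S'\subseteq S_1$, which gives $(1-1)^{p-1}=0$.
\end{itemize}

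These values come from computing $|H_2^0|$ and $|H/Z_0(H)|$ on each Cartan of the three chains; the paper identifies the Cartan classes with \texttt{atlas} and tabulates the constants. Without that computation, Step~4 is not a proof. In particular, nothing in your proposal establishes that (a) and (c) vanish while (b) does not.

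Your primary explanation for the vanishing on the $\chi_1$-block in (a) is wrong. The character formula of $\bbC$ does contain parameters with Lie-level datum $\{2p-1\}^*$, together with their Cayley transforms along $S_1$. These lift to $c_{S'}(\tu\gamma(\{2p-1\}^*))$, which have central character $\chi_1$ by Lemma~\ref{l:2p+12p-1-cenchar-comp}.

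Section~\ref{s:low-rank} shows this concretely for $\Spin(5,3)$ (Table~\ref{tab:Spin53-H1-H2}):
\begin{itemize}
\item $\gamma_{40}=\gamma(\{3\}^*;(+,+,+))$ and $\gamma_{36}=\gamma(\{1,2\},\{3\}^*)$ occur with signs $-1$ and $+1$;
\item they lift to $I(\{3\}^*)_{\chi_1}$ and $I(\{1,2\},\{3\}^*)_{\chi_1}$, each containing $\Phi_1$ once;
\item the coefficient of $\Phi_1$ vanishes only because $C(H_1)-C(H_2)=0$.
\end{itemize}
So the $\chi_1$-vanishing is a genuine cancellation. In Step~3, the relevant linear parameters on this block are the Cayley transforms of the most split parameter with datum $\{2p-1\}^*$. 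They are not the $c_{S'}(\gamma_0)$, which lift into the $\chi_2$-block.

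Finally, Lemma~\ref{l:comp-factor} does not cover $J(\{2p\}^*)$ and $J(\{2p+1\}^*)$, i.e.\ the case $S=\emptyset$. There you need a separate triangularity argument: these parameters have maximal length, so among the lifted standard modules only $I(\tu\delta)$ contains $J(\tu\delta)$. This gives coefficient $M(\gamma_0,\gamma_0)C(H_s)=1$.
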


Since
$\LLift(\bbC)\subseteq \mathcal R_{\rho/2}(\tu G),$
to prove Theorem \ref{t:main}, it suffices to compute the coefficient of
$J(\tu\delta)$ in $\Lift_G^{\tu G}(\bbC)$ for
$\tu\delta\in \mathcal P_{\rho/2}(\tu G)$.
We first prove the following lemma.

\begin{lemma} \label{l:coefficient-formula}
Let $\tu\delta\in\mathcal P_{\rho/2}(\tu G)$.
Let $\gamma_0$ be the $\rho$-regular character of $G$ such that
$J(\gamma_0)=\bbC$ (see Lemma \ref{l:triv-param}). Suppose that the character formula for $J(\gamma_0)$ has
been grouped into stable standard sums:
\begin{equation} \label{e:char-formular-C}
J(\gamma_0)
=
\sum_{\gamma}
M(\gamma,\gamma_0)I_G^{\mathrm{st}}(\gamma), 
\end{equation}
where $M(\gamma,\gamma_0)= (-1)^{\ell(\gamma_0) -\ell(\gamma)}$. 
For each such $\gamma$, write
\begin{equation}
\Lift_G^{\tu G}(I_G^{\mathrm{st}}(\gamma))
=
C(H_\gamma)
\sum_{\tu\gamma\in\mathcal L(\gamma)}
I_{\tu G}(\tu\gamma),
\end{equation}
where $\mathcal L(\gamma)$ denotes the full set of genuine standard parameters
occurring in the lift and $C(H_\gamma)$ is the lifting constant from Theorem \ref{t:lift-stable-standard}. Then the coefficient of $J(\tu\delta)$ in
$\Lift_G^{\tu G}(\bbC)$ is
\begin{equation} \label{e:coeff}
\sum_{\gamma}
M(\gamma,\gamma_0)C(H_\gamma)
\sum_{\tu\gamma\in\mathcal L(\gamma)}
m(\tu\delta,\tu\gamma).
\end{equation}
\end{lemma}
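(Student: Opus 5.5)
The plan is to unwind the definitions by linearity: start from the grouped character formula \eqref{e:char-formular-C}, apply the lifting operator to each stable sum, and then rewrite the genuine standard modules in the basis of irreducibles.

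\emph{Step 1: the character formula for $\bbC$.} Since $\bbC=J(\gamma_0)$ is stable, its character formula $[J(\gamma_0)]=\sum_\gamma M(\gamma,\gamma_0)[I(\gamma)]$ from Definition~\ref{KLVpoly} can be regrouped into stable sums $I_G^{\mathrm{st}}(\gamma)$. The regrouping uses two facts. First, the coefficients $M(\gamma,\gamma_0)$ are constant on the $W_i$-orbits defining each stable sum. Second, they equal $(-1)^{\ell(\gamma_0)-\ell(\gamma)}$, which is the content of Lemma~\ref{l:triv-M} for the relevant $\gamma=c_{S'}(\gamma_0)$. More generally, the trivial representation's Kazhdan--Lusztig--Vogan polynomials are all $1$ up to sign, by \cite[Proposition 6.14]{V83a}. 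I take \eqref{e:char-formular-C} as given in the hypothesis.

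\emph{Step 2: apply the lift term by term.} The lifting operator $\Lift_G^{\tu G}$ is defined at the level of characters by a linear formula. Therefore
\[
\Lift_G^{\tu G}(\bbC)=\sum_\gamma M(\gamma,\gamma_0)\,\Lift_G^{\tu G}\bigl(I_G^{\mathrm{st}}(\gamma)\bigr).
\]
By Theorem~\ref{t:lift-stable-standard}, each summand equals $C(H_\gamma)\sum_{\tu\gamma\in\mathcal L(\gamma)} I_{\tu G}(\tu\gamma)$, where $\mathcal L(\gamma)$ is the set of genuine constituents counted without multiplicity.

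\emph{Step 3: pass to the irreducible basis.} In the Grothendieck group of the genuine block, Definition~\ref{KLVpoly} gives
\[
[I(\tu\gamma)]=\sum_{\tu\delta} m(\tu\delta,\tu\gamma)\,[J(\tu\delta)].
\]
Substituting and interchanging the finite sums expresses $\Lift_G^{\tu G}(\bbC)$ as a combination of the $J(\tu\delta)$. The coefficient of a fixed $J(\tu\delta)$ is then exactly \eqref{e:coeff}. The set of $J$'s is a basis, so this coefficient is well defined. For $\tu\delta\in\mathcal P_{\rho/2}(\tu G)$, only those $\tu\gamma$ at infinitesimal character $\rho/2$ contribute. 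Any $\tu\gamma$ with a different infinitesimal character gives $m=0$, so these terms can be kept without harm.

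\emph{Main obstacle.} The only delicate point is Step 1. One must justify that grouping the character formula into stable sums is legitimate: each $I_G^{\mathrm{st}}(\gamma)$ must occur with the single coefficient $M(\gamma,\gamma_0)$, with no overcounting across $W(M,H)\backslash W_i$. I would handle this by noting two things. The character of $J(\gamma_0)$ is stable. By Shelstad/Vogan, a stable virtual character is uniquely a combination of stable standard characters. Comparing coefficients then forces the grouped form, with the signs from Lemma~\ref{l:triv-M}. Everything after that is formal linearity.
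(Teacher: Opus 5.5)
Your Steps~2 and~3 are exactly the paper's proof: apply the lifting operator linearly to the grouped formula \eqref{e:char-formular-C}, replace each $\Lift_G^{\tu G}(I_G^{\mathrm{st}}(\gamma))$ using Theorem~\ref{t:lift-stable-standard}, expand $I_{\tu G}(\tu\gamma)=\sum_{\tu\eta} m(\tu\eta,\tu\gamma)[J(\tu\eta)]$, and collect the coefficient of $J(\tu\delta)$. Your Step~1 and ``main obstacle'' only address the grouping into stable sums and the sign formula, which the lemma assumes, so they are not needed; also, your side claim that every coefficient in the trivial representation's character formula is $\pm1$ is false in general, since some parameters in the block occur with coefficient $0$.
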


\begin{proof}
Applying the lifting operator to \eqref{e:char-formular-C}, we get
\begin{align*}
\Lift_G^{\tu G}(\bbC)
&=
\sum_{\gamma}
M(\gamma,\gamma_0)\,
\Lift_G^{\tu G}(I_G^{\mathrm{st}}(\gamma))\\
&= \sum_{\gamma}
M(\gamma,\gamma_0)\, C(H_\gamma)
\sum_{\tu\gamma\in \mathcal L(\gamma)}
I_{\tu G}(\tu\gamma),
\end{align*}
where the second equality follows from Theorem \ref{t:lift-stable-standard}.
Since $I_{\tu G}(\tu\gamma)=
\sum\limits_{\tu\eta}
m(\tu\eta,\tu\gamma)[J(\tu\eta)],$ collecting the coefficient of $J(\tu\delta)$ gives 
\begin{equation*}
\sum_{\gamma}
M(\gamma,\gamma_0)\,C(H_\gamma)
\sum_{\tu\gamma\in \mathcal L(\gamma)}
m(\tu\delta,\tu\gamma).
\end{equation*}
This proves the lemma.
\end{proof}

\begin{proof}[Proof of Theorem~\ref{t:main}]
We use the notation in Definition \ref{d:param}.

We first prove (a). Consider $\tu G =\tu\Spin(2p+1,2p-1)$. 
Let
$\tu\delta=\tu\gamma(\{2p\}^*)$.  This parameter has maximal length among the $\rho/2$-regular characters in the
genuine block.  It can be computed that for $\tu\gamma=\tu\gamma(\{i\}^*)$ with $\ell(\tu\gamma)=\ell (\tu\delta)$, $m(\tu\delta,\tu\gamma)=0$ unless $\tu\gamma=\tu\delta$.

Thus, in \eqref{e:coeff}, the only lifted standard module that can contribute
to $J(\tu\delta)$ is $I_{\tu G}(\tu\delta)$. This standard module is obtained
from the stable parameter $\gamma=\gamma_0$. Hence
$M(\gamma_0,\gamma_0)=1.$
Moreover, 
$C(H_{\gamma_0})=1.$
Therefore the coefficient of
$J(\tu\gamma(\{2p\}^*))$ in $\Lift_G^{\tu G}(\bbC)$ is $$M(\gamma_0,\gamma_0)C(H_{\gamma_0})m(\tu\delta,\tu\delta)=
1.$$
Hence
$$
J(\tu\gamma(\{2p\}^*))
\in \LLift(\bbC).
$$

Now let
\[
\tu\delta=c_{S_1}(\tu\gamma_\sharp).
\]
For each subset \(S'\subseteq S_1\), define
\[
\gamma_{S'}:=c_{S'}(\gamma_0).
\]
By Lemmas \ref{l:triv-M}, \ref{l:comp-factor} and \ref{l:coefficient-formula}, the coefficient of
\(J(\tu\delta)\) in \(\Lift_G^{\tu G}(\bbC)\) is
\begin{equation}\label{e:coeff1}
K_{S_1}
=
\sum_{S'\subseteq S_1}
(-1)^{\ell(\gamma_0)-\ell(\gamma_{S'})}
C(H_{S'}),
\end{equation}
where \(H_{S'}\) is the Cartan subgroup attached to \(\gamma_{S'}\), and
\(C(H_{S'})\) is the corresponding lifting constant.

Write
\[
S_1=\{\alpha_1,\alpha_3,\ldots,\alpha_{2p-3}\},
\qquad
\alpha_i=e_i-e_{i+1}.
\]
Thus \(|S_1|=p-1\). Table \ref{tab:C-H-spin-2p1-2p1} gives the relevant Cartan subgroups and
lifting constants. We use \atlas\ to identify the Cartan classes \(H_{S'}\).

\begin{table}[!htbp]
\centering
\renewcommand{\arraystretch}{1.35}
\[
\begin{array}{c|c|c|c}
S' & \#\text{ of such }S' & H_{S'} & C(H_{S'}) \\
\hline
\emptyset
&
1
&
(\bbR^\times)^{2p-2}\times \bbC^\times
&
1
\\

\{\alpha_i\}
&
{p-1\choose 1}
&
(\bbR^\times)^{2p-4}\times(\bbC^\times)^2
&
1
\\

\vdots
&
\vdots
&
\vdots
&
\vdots
\\

\{\alpha_{i_1},\ldots,\alpha_{i_k}\}
&
{p-1\choose k}
&
(\bbR^\times)^{2p-2-2k}\times(\bbC^\times)^{k+1}
&
1
\\

\vdots
&
\vdots
&
\vdots
&
\vdots
\\

S_1-\{\alpha_i\}
&
{p-1\choose p-2}
&
(\bbR^\times)^2\times(\bbC^\times)^{p-1}
&
1
\\

S_1
&
1
&
(\bbC^\times)^p
&
1
\end{array}
\]
\caption{The Cartan subgroups \(H_{S'}\) and lifting constants \(C(H_{S'})\) for \(S'\subseteq S_1\) in the case \(G=\Spin(2p+1,2p-1)\).}
\label{tab:C-H-spin-2p1-2p1}
\end{table}

In Table \ref{tab:C-H-spin-2p1-2p1},  \(i\in\{1,3,\ldots,2p-3\}\), and $\{i_1,\ldots,i_k\}\subseteq \{1,3,\ldots,2p-3\}$.
For \(|S'|=k\), we have $
\ell(\gamma_0)-\ell(\gamma_{S'})=k,$
and the table gives
$C(H_{S'})=1.$
Therefore \eqref{e:coeff1} becomes
\[
K_{S_1}
=
\sum_{k=0}^{p-1}
(-1)^k {p-1\choose k}
=
(1-1)^{p-1}
=
0.
\]
Hence
\[
J(\{1,2\},\ldots,\{2p-3,2p-2\},\{2p-1\}^*)
\notin \LLift(\bbC).
\]

Now we look at (b).  Consider $\tu G=\tu\Spin (2p+2,2p)$. 
Using the same argument of the first part of the proof of (a), we can show  that 
the coefficient of $J(\{2p+1\}^*)$ in $\LiftC$ is 1.

Now let $\tu\delta=c_{S_2}(\tu\gamma_\sharp).$
For each subset \(S'\subseteq S_2\), define
\[
\gamma_{S'}:=c_{S'}(\gamma_0).
\]
We compute the coefficient \(K_{S_2}\) as in \eqref{e:coeff1}, with
\(S_1\) replaced by \(S_2\).

Write
\[
S_2=\{\alpha_1,\alpha_3,\ldots,\alpha_{2p-1}\},
\qquad
\alpha_i=e_i-e_{i+1}.
\]
Thus \(|S_2|=p\). The relevant Cartan subgroups and lifting constants are
listed in Table \ref{tab:C-H-spin-2p2-2p}.

\begin{table}[!htbp]
\centering
\renewcommand{\arraystretch}{1.35}
\[
\begin{array}{c|c|c|c}
S' & \#\text{ of such }S' & H_{S'} & C(H_{S'}) \\
\hline
\emptyset
&
1
&
(\bbR^\times)^{2p-1}\times \bbC^\times
&
1
\\

\{\alpha_i\}
&
{p\choose 1}
&
(\bbR^\times)^{2p-3}\times(\bbC^\times)^2
&
1
\\

\vdots
&
\vdots
&
\vdots
&
\vdots
\\

\{\alpha_{i_1},\ldots,\alpha_{i_k}\}
&
{p\choose k}
&
(\bbR^\times)^{2p-1-2k}\times(\bbC^\times)^{k+1}
&
1
\\

\vdots
&
\vdots
&
\vdots
&
\vdots
\\

S_2-\{\alpha_i\}
&
{p\choose p-1}
&
\bbR^\times\times(\bbC^\times)^p
&
1
\\

S_2
&
1
&
S^1\times(\bbC^\times)^p
&
2
\end{array}
\]
\caption{The Cartan subgroups \(H_{S'}\) and lifting constants \(C(H_{S'})\) for \(S'\subseteq S_2\) in the case \(G=\Spin(2p+2,2p)\).}
\label{tab:C-H-spin-2p2-2p}
\end{table}

In Table \ref{tab:C-H-spin-2p2-2p}, we have
$i\in\{1,3,\ldots,2p-1\}$ and $\{i_1,\ldots,i_k\}\subseteq\{1,3,\ldots,2p-1\}.$
For \(|S'|=k\), we have
$\ell(\gamma_0)-\ell(\gamma_{S'})=k.$
Moreover, Table \ref{tab:C-H-spin-2p2-2p} gives
\[
C(H_{S'})=1
\qquad\text{for } |S'|<p,
\]
while
\[
C(H_{S_2})=2.
\]
Therefore
$$
K_{S_2}
=
\sum_{k=0}^{p-1}
(-1)^k {p\choose k} + (-1)^p\cdot 2 
=
\begin{cases}
1& \text{ if $p$ is even,}\\
-1 & \text{ if $p$  is odd}. 
\end{cases}
$$
In particular, \(K_{S_2}\neq 0\).

The same computation applies to \(S_3\). Indeed, replacing the last root
\(e_{2p-1}-e_{2p}\) by \(e_{2p-1}+e_{2p}\) gives the same Cartan classes and
the same lifting constants. Hence
\[
K_{S_3}=K_{S_2}=(-1)^p.
\]
Therefore both small representations
\[
J(\{1,2\},\ldots,\{2p-1,2p\},\{2p+1\}^*)
\]
and
\[
J(\{1,2\},\ldots,\{2p-3,2p-2\}, \{-(2p-1),-2p\},\{2p+1\}^*)
\]
occur in \(\LLift(\bbC)\).

Finally we prove (c). Let
$\tu G=\tu{\Spin}(2p+3,2p-1),$
and let
$\tu\delta=c_{S_1}(\tu\gamma_\sharp).$
As in the proof of (a), for each subset \(S'\subseteq S_1\), define
\[
\gamma_{S'}:=c_{S'}(\gamma_0).
\]
The relevant Cartan subgroups \(H_{S'}\) and lifting constants \(C(H_{S'})\)
are listed in Table \ref{tab:C-H-spin-2p3-2p1}.

\begin{table}[!htbp]
\centering
\renewcommand{\arraystretch}{1.35}
\[
\begin{array}{c|c|c|c}
S' & \#\text{ of such }S' & H_{S'} & C(H_{S'}) \\
\hline
\emptyset
&
1
&
(\bbR^\times)^{2p-2}\times S^1\times \bbC^\times
&
1
\\

\{\alpha_i\}
&
{p-1\choose 1}
&
(\bbR^\times)^{2p-4}\times S^1\times(\bbC^\times)^2
&
1
\\

\vdots
&
\vdots
&
\vdots
&
\vdots
\\

\{\alpha_{i_1},\ldots,\alpha_{i_k}\}
&
{p-1\choose k}
&
(\bbR^\times)^{2p-2-2k}\times S^1\times(\bbC^\times)^{k+1}
&
1
\\

\vdots
&
\vdots
&
\vdots
&
\vdots
\\

S_1-\{\alpha_i\}
&
{p-1\choose p-2}
&
(\bbR^\times)^2\times S^1\times(\bbC^\times)^{p-1}
&
1
\\

S_1
&
1
&
S^1\times(\bbC^\times)^p
&
1
\end{array}
\]
\caption{The Cartan subgroups \(H_{S'}\) and lifting constants \(C(H_{S'})\) for \(S'\subseteq S_1\) in the case \(G=\Spin(2p+3,2p-1)\).}
\label{tab:C-H-spin-2p3-2p1}
\end{table}

For \(|S'|=k\), we have 
$\ell(\gamma_0)-\ell(\gamma_{S'})=k,$
and Table \ref{tab:C-H-spin-2p3-2p1} gives
$C(H_{S'})=1$ for every $S'\subseteq S_1$. Therefore, as in (a), the coefficient of
\(J(\tu\delta)\) is
\[
K_{S_1}
=
\sum_{S'\subseteq S_1}
(-1)^{\ell(\gamma_0)-\ell(\gamma_{S'})}
C(H_{S'})
=
\sum_{k=0}^{p-1}
(-1)^k {p-1\choose k}
=
0,
\]
where we use \(p\geq 2\).

Thus
\[
J(\{1,2\},\ldots,\{2p-3,2p-2\},\{2p-1,2p+1\}^*)
\notin \LLift(\bbC).
\]
Since this is the only small representation in
$\mathcal R_{\rho/2}(\tu G)_\chi$
for \(\tu G=\tu{\Spin}(2p+3,2p-1)\), and since $\LLift(\bbC)_\chi\subseteq \mathcal R_{\rho/2}(\tu G)_\chi,$
we conclude that
\[
\LLift(\bbC)_\chi=\emptyset.
\]
\end{proof}

We now introduce notation for the representations in
$\prod_{\rho/2}^s(\tu G)$ that will be used throughout the remainder of the paper. Some of these representations have been shown to occur in $\LLift(\bbC)$.

\begin{notation}\label{n:small}

\begin{itemize}
    \item[(a)] Suppose
    \(\widetilde G=\widetilde{\Spin}(2p+1,2p-1)\).
    Define
    \begin{align*}
    \Omega_2
    &=
    J(\{2p\}^*)_{\chi_2},\\
    \Phi_1
    &=
    J(\{1,2\},\ldots,\{2p-3,2p-2\},
      \{2p-1\}^*)_{\chi_1}.
    \end{align*}

    \item[(b)] Suppose
    \(\widetilde G=\widetilde{\Spin}(2p+2,2p)\).
    For
    \(\chi\in\widehat Z_{\rho/2}(\widetilde G) =\{\chi_2,\chi_4\}\), define
    \begin{align*}
        \Omega_\chi
        &=
        J(\{2p+1\}^*)_\chi,\\
        \Phi_\chi
        &=
        J(\{1,2\},\ldots,\{2p-1,2p\},
          \{2p+1\}^*)_\chi,\\
        \Xi_\chi
        &=
        J(\{1,2\},\ldots,\{2p-3,2p-2\},
          \{-(2p-1),-2p\},\{2p+1\}^*)_\chi.
    \end{align*}
  For simplicity, 
    we also write
    \[
    \Omega_i=\Omega_{\chi_i},
    \qquad
    \Phi_i=\Phi_{\chi_i},
    \qquad
    \Xi_i=\Xi_{\chi_i},
    \qquad i=2,4.
    \]

    \item[(c)] Suppose
    \(\widetilde G=\widetilde{\Spin}(2p+3,2p-1)\).
    Since
    \(\widehat Z_{\rho/2}(\widetilde G)=\{\chi_2\}\), define
    \[
    \Phi_2
    =
    J(\{1,2\},\ldots,\{2p-3,2p-2\},
      \{2p-1,2p+1\}^*)_{\chi_2}.
    \]
\end{itemize}

\end{notation}

Using Notation \ref{n:small}, the following corollary follows immediately from Theorem \ref{t:main} together
with the coefficient computations in its proof.

\begin{cor}
Let
$\widetilde G\in
\{
\widetilde{\Spin}(2p+1,2p-1),
\widetilde{\Spin}(2p+2,2p),
\widetilde{\Spin}(2p+3,2p-1)
\}.$

Using Notation~\ref{n:small}, we have:

\begin{itemize}
    \item[(a)] If
    \(\widetilde G=\widetilde{\Spin}(2p+1,2p-1)\), then
    \[
    \LiftC=\Omega_2.
    \]

    \item[(b)] If
    \(\widetilde G=\widetilde{\Spin}(2p+2,2p)\), then
    \[
    \LiftC
    =
    \Omega_2+\Omega_4
    +
    (-1)^p
    (\Phi_2+\Phi_4+\Xi_2+\Xi_4).
    \]

    \item[(c)] If
    \(\widetilde G=\widetilde{\Spin}(2p+3,2p-1)\), then
    \[
    \LiftC=0.
    \]
\end{itemize}
\end{cor}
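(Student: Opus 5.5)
The plan is to read off each multiplicity directly from the coefficient computations already made in the proof of Theorem~\ref{t:main}, using Lemma~\ref{l:coefficient-formula}. By Theorem~\ref{t:lift-trivial-small} and Theorem~\ref{t:small-exhaustion}, every constituent of $\LiftC$ lies in $\mathcal R_{\rho/2}(\tu G)=\prod\nolimits^s_{\rho/2}(\tu G)$. Hence, in the Grothendieck group,
\[
\LiftC=\sum_{\chi\in\widehat Z_{\rho/2}(\tu G)}\ \sum_{\tu\delta\in\mathcal P_{\rho/2}(\tu G)_\chi} K(\tu\delta)\,J(\tu\delta),
\]
where $K(\tu\delta)$ is the coefficient \eqref{e:coeff}. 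It therefore suffices to list the values of $K(\tu\delta)$ for each $\tu\delta$ in Definition~\ref{d:param}, block by block, and then rewrite the sum in Notation~\ref{n:small}.

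\textbf{Case (a).} For $\Spin(2p+1,2p-1)$, the proof of Theorem~\ref{t:main}(a) gives $K(\tu\gamma(\{2p\}^*))=1$ and $K_{S_1}=0$ for $\Phi_1$. Hence $\LiftC=\Omega_2$.

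\textbf{Case (c).} For $\Spin(2p+3,2p-1)$, the only candidate is $\Phi_2$, and its coefficient $K_{S_1}=0$. Hence $\LiftC=0$.

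\textbf{Case (b).} For $\Spin(2p+2,2p)$ I would argue block by block, for each $\chi\in\{\chi_2,\chi_4\}$:
\[
K(\tu\gamma_\sharp)=1,\qquad K_{S_2}=K_{S_3}=(-1)^p.
\]
This gives $\Omega_\chi+(-1)^p(\Phi_\chi+\Xi_\chi)$, and summing over $\chi$ yields the stated formula.

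The point needing care, and the main obstacle, is that these computations must hold separately in each central-character block, with no cross-block contributions. Two facts are needed.

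\emph{First}, the lift of each stable standard module splits according to central character, so that $\mathcal L(\gamma)$ contains exactly one genuine parameter per compatible $\chi$ on each relevant Lie-level datum. This should follow from Proposition~\ref{prop:genuine-character-determined}(2) together with the Lemma showing that $\mathcal X(\gamma^\circ(\{i\}^*))=\{\chi_2,\chi_4\}$.

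\emph{Second}, the multiplicities $m(\tu\delta,\tu\gamma)$ are the same in both blocks. This holds because the two blocks have identical Lie-level parametrizations, as noted in the Remark following that Lemma, and because Lemma~\ref{l:comp-factor} applies for each $\chi$.

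I would also double-check two details. The value $(-1)^p$ comes from the identity
\[
\sum_{k=0}^{p-1}(-1)^k\binom pk + 2(-1)^p=-(-1)^p+2(-1)^p=(-1)^p.
\]
And $K(\tu\gamma_\sharp)=1$ relies on the maximal-length argument: no other standard module in the block contains $J(\tu\gamma_\sharp)$, which is the same reasoning as in case (a).
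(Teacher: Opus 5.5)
Your proposal is correct and is essentially the paper's argument: the corollary is read off from the per-block coefficients computed in the proof of Theorem~\ref{t:main} (coefficient $1$ for $\Omega_\chi$, $K_{S_1}=0$ in cases (a) and (c), and $K_{S_2}=K_{S_3}=(-1)^p$ in case (b)), combined with the fact that every constituent of $\LiftC$ lies in $\mathcal R_{\rho/2}(\tu G)$. Your remarks on keeping the $\chi_2$- and $\chi_4$-blocks separate only make explicit what the paper uses implicitly when it runs the computation for each $\chi$.
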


\section{A Low-Rank Example: $\tu\Spin(5,3)$} \label{s:low-rank}

In this section, we illustrate the lifting calculation in the first
substantial low-rank case
\[
\tu G=\tu{\Spin}(5,3).
\]
Besides providing a direct verification of the general lifting formula,
this example exhibits the formation of stable standard sums, the role of
the lifting constants, and the cancellation arising from the nonlinear
Kazhdan--Lusztig multiplicities. Since it already contains the main
features of the general argument, we omit analogous calculations for the
other low-rank groups.

\subsection{Linear $\rho$-regular characters}

For the linear group $G=\Spin(5,3)$, there are three conjugacy classes of
Cartan subgroups. Using the notation of Notation~\ref{n:linear-param}, we
parametrize the \(\rho\)-regular characters attached to each Cartan subgroup
as follows:
\[
\renewcommand{\arraystretch}{1.6}
\begin{array}{c@{\qquad}c}
\toprule
\text{Cartan subgroup}
&
\rho\text{-regular characters}
\\
\midrule
H_0\cong(S^1)^2\times\bbC^\times
&
\gamma\bigl(\{\{i,j\},k\}^*\bigr)
\\[1mm]
H_1\cong(\bbC^\times)^2
&
\gamma\bigl(\{\eta i,\eta j\},\{k\}^*\bigr),
\qquad \eta=\pm1
\\[1mm]
H_2\cong\bbC^\times\times(\bbR^\times)^2
&
\gamma\bigl(\{i\}^*;\varepsilon\bigr)
\\
\bottomrule
\end{array}
\]

The notation has the following meaning:
\begin{itemize}
\item The indices \(i,j,k\) are distinct elements of
\(\{1,2,3,4\}\).

\item For \(H_0\), the distinguished pair \(\{i,j\}\) inside
\(\{\{i,j\},k\}^*\) indicates that the roots
$e_i\pm e_j$
are compact imaginary, whereas
$e_i\pm e_k$ and $e_j\pm e_k$
are noncompact imaginary. Thus, for a fixed triple
\(\{i,j,k\}\), the three choices of the distinguished pair give the
three possible arrangements of compact and noncompact imaginary roots.

\item For \(H_1\),
\[
\gamma\bigl(\{i,j\},\{k\}^*\bigr)
\]
means that \(e_i-e_j\) is noncompact imaginary and \(e_i+e_j\) is real,
whereas
\[
\gamma\bigl(\{-i,-j\},\{k\}^*\bigr)
\]
means that \(e_i+e_j\) is noncompact imaginary and \(e_i-e_j\) is real.

\item For \(H_2\),
\[
\varepsilon=(\varepsilon_1,\varepsilon_2,\varepsilon_3)
\]
is the sequence of signs assigned to the three indices not contained in
the starred singleton \(\{i\}^*\), arranged in decreasing order.
\end{itemize}

\subsection{Lifting of the trivial representation} We now consider the character formula of the trivial representation
\(\bbC\). Using the labeling from \atlas, we write
\begin{equation}\label{e:trivial-char-formula-53}
\bbC
=
J(\gamma_{45})
=
\sum_i \sgn(\gamma_i)I(\gamma_i).
\end{equation}
The terms in \eqref{e:trivial-char-formula-53} are listed in
Tables~\ref{tab:Spin53-H0} and~\ref{tab:Spin53-H1-H2}. For each
parameter, we record its Cartan subgroup, the lifting constant \(C(H)\),
its coefficient in the character formula, the corresponding stable
standard sum, and the \(\rho/2\)-regular genuine parameter obtained
after lifting. For \(H_1\) and \(H_2\), each standard module appearing
in the table is stable by itself.

\begin{table}[H]
\centering
\small
\renewcommand{\arraystretch}{1.35}
\setlength{\tabcolsep}{4pt}

\resizebox{\textwidth}{!}{%
\begin{tabular}{
@{}
c
c
c
c
c
c
@{}
}
\toprule
\text{Cartan}
&
\(C(H)\)
&
\(\sgn(\gamma_i)\)
&
\(\rho\)-regular parameters
&
\text{Stable standard sum}
&
\text{Lift}
\\
\midrule

\(H_0\)
&
\(2\)
&
\(-1\)
&
\(\begin{aligned}[t]
\gamma_{12}
&=
\gamma\bigl(\{\{1,4\},3\}^{*}\bigr),\\
\gamma_{13}
&=
\gamma\bigl(\{\{3,4\},1\}^{*}\bigr),\\
\gamma_{14}
&=
\gamma\bigl(\{\{1,3\},4\}^{*}\bigr)
\end{aligned}\)
&
\(\displaystyle
I(\gamma_{12})+I(\gamma_{13})+I(\gamma_{14})
\)
&
\(\displaystyle
\tu\gamma\bigl(\{1,3,4\}^{*}\bigr)
\)
\\[3mm]

\(H_0\)
&
\(2\)
&
\(1\)
&
\(\begin{aligned}[t]
\gamma_{21}
&=
\gamma\bigl(\{\{2,4\},3\}^{*}\bigr),\\
\gamma_{22}
&=
\gamma\bigl(\{\{3,4\},2\}^{*}\bigr),\\
\gamma_{23}
&=
\gamma\bigl(\{\{2,3\},4\}^{*}\bigr)
\end{aligned}\)
&
\(\displaystyle
I(\gamma_{21})+I(\gamma_{22})+I(\gamma_{23})
\)
&
\(\displaystyle
\tu\gamma\bigl(\{2,3,4\}^{*}\bigr)
\)
\\

\bottomrule
\end{tabular}%
}

\caption{The \(H_0\)-terms in the standard character formula of
\(J(\gamma_{45})=\bbC\).}
\label{tab:Spin53-H0}
\end{table}

\begin{longtable}{
@{}
c
c
c
>{\raggedright\arraybackslash}p{0.42\textwidth}
>{\raggedright\arraybackslash}p{0.30\textwidth}
@{}
}
\caption{The \(H_1\)- and \(H_2\)-terms in the standard character formula
of \(J(\gamma_{45})=\bbC\).}
\label{tab:Spin53-H1-H2}
\\
\toprule
Cartan
&
\(C(H)\)
&
\(\sgn(\gamma_i)\)
&
\(\rho\)-regular parameter
&
Lift
\\
\midrule
\endfirsthead

\toprule
Cartan
&
\(C(H)\)
&
\(\sgn(\gamma_i)\)
&
\(\rho\)-regular parameter
&
Lift
\\
\midrule
\endhead

\midrule
\multicolumn{5}{r}{\textit{Continued on the next page}}
\\
\endfoot

\bottomrule
\endlastfoot

\(H_1\)
&
\(1\)
&
\(-1\)
&
\(\gamma_9=\gamma\bigl(\{-2,-4\},\{1\}^*\bigr)\)
&
\(0\)
\\

\(H_1\)
&
\(1\)
&
\(-1\)
&
\(\gamma_{10}=\gamma\bigl(\{-1,-3\},\{2\}^*\bigr)\)
&
\(0\)
\\
\addlinespace

\(H_1\)
&
\(1\)
&
\(1\)
&
\(\gamma_{15}=\gamma\bigl(\{3,4\},\{1\}^*\bigr)\)
&
\(\tu\gamma\bigl(\{3,4\},\{1\}^*\bigr)\)
\\

\(H_1\)
&
\(1\)
&
\(1\)
&
\(\gamma_{16}=\gamma\bigl(\{-1,-3\},\{4\}^*\bigr)\)
&
\(0\)
\\

\(H_1\)
&
\(1\)
&
\(1\)
&
\(\gamma_{17}=\gamma\bigl(\{2,3\},\{1\}^*\bigr)\)
&
\(\tu\gamma\bigl(\{2,3\},\{1\}^*\bigr)\)
\\

\(H_1\)
&
\(1\)
&
\(1\)
&
\(\gamma_{18}=\gamma\bigl(\{-3,-4\},\{1\}^*\bigr)\)
&
\(\tu\gamma\bigl(\{-3,-4\},\{1\}^*\bigr)\)
\\

\(H_1\)
&
\(1\)
&
\(1\)
&
\(\gamma_{19}=\gamma\bigl(\{1,4\},\{2\}^*\bigr)\)
&
\(\tu\gamma\bigl(\{1,4\},\{2\}^*\bigr)\)
\\

\(H_1\)
&
\(1\)
&
\(1\)
&
\(\gamma_{20}=\gamma\bigl(\{-1,-4\},\{2\}^*\bigr)\)
&
\(\tu\gamma\bigl(\{-1,-4\},\{2\}^*\bigr)\)
\\
\addlinespace

\(H_1\)
&
\(1\)
&
\(-1\)
&
\(\gamma_{27}=\gamma\bigl(\{1,4\},\{3\}^*\bigr)\)
&
\(\tu\gamma\bigl(\{1,4\},\{3\}^*\bigr)\)
\\

\(H_1\)
&
\(1\)
&
\(-1\)
&
\(\gamma_{28}=\gamma\bigl(\{-1,-4\},\{3\}^*\bigr)\)
&
\(\tu\gamma\bigl(\{-1,-4\},\{3\}^*\bigr)\)
\\

\(H_1\)
&
\(1\)
&
\(-1\)
&
\(\gamma_{29}=\gamma\bigl(\{3,4\},\{2\}^*\bigr)\)
&
\(\tu\gamma\bigl(\{3,4\},\{2\}^*\bigr)\)
\\

\(H_1\)
&
\(1\)
&
\(-1\)
&
\(\gamma_{30}=\gamma\bigl(\{-2,-3\},\{4\}^*\bigr)\)
&
\(\tu\gamma\bigl(\{-2,-3\},\{4\}^*\bigr)\)
\\

\(H_1\)
&
\(1\)
&
\(-1\)
&
\(\gamma_{31}=\gamma\bigl(\{1,3\},\{2\}^*\bigr)\)
&
\(0\)
\\

\(H_1\)
&
\(1\)
&
\(-1\)
&
\(\gamma_{32}=\gamma\bigl(\{-3,-4\},\{2\}^*\bigr)\)
&
\(\tu\gamma\bigl(\{-3,-4\},\{2\}^*\bigr)\)
\\
\addlinespace

\(H_1\)
&
\(1\)
&
\(1\)
&
\(\gamma_{36}=\gamma\bigl(\{1,2\},\{3\}^*\bigr)\)
&
\(\tu\gamma\bigl(\{1,2\},\{3\}^*\bigr)\)
\\

\(H_1\)
&
\(1\)
&
\(1\)
&
\(\gamma_{37}=\gamma\bigl(\{1,3\},\{4\}^*\bigr)\)
&
\(0\)
\\

\(H_1\)
&
\(1\)
&
\(1\)
&
\(\gamma_{38}=\gamma\bigl(\{2,4\},\{3\}^*\bigr)\)
&
\(0\)
\\

\(H_1\)
&
\(1\)
&
\(1\)
&
\(\gamma_{39}=\gamma\bigl(\{-2,-4\},\{3\}^*\bigr)\)
&
\(0\)
\\
\addlinespace

\(H_1\)
&
\(1\)
&
\(-1\)
&
\(\gamma_{43}=\gamma\bigl(\{1,2\},\{4\}^*\bigr)\)
&
\(\tu\gamma\bigl(\{1,2\},\{4\}^*\bigr)\)
\\

\midrule

\(H_2\)
&
\(1\)
&
\(-1\)
&
\(\gamma_{26}=\gamma\bigl(\{1\}^*;(+,+,+)\bigr)\)
&
\(\tu\gamma\bigl(\{1\}^*\bigr)\)
\\

\(H_2\)
&
\(1\)
&
\(-1\)
&
\(\gamma_{40}=\gamma\bigl(\{3\}^*;(+,+,+)\bigr)\)
&
\(\tu\gamma\bigl(\{3\}^*\bigr)\)
\\

\(H_2\)
&
\(1\)
&
\(1\)
&
\(\gamma_{33}=\gamma\bigl(\{2\}^*;(+,+,+)\bigr)\)
&
\(\tu\gamma\bigl(\{2\}^*\bigr)\)
\\

\(H_2\)
&
\(1\)
&
\(1\)
&
\(\gamma_{45}=\gamma\bigl(\{4\}^*;(+,+,+)\bigr)\)
&
\(\tu\gamma\bigl(\{4\}^*\bigr)\)
\\

\end{longtable}

After grouping the \(H_0\)-terms into stable standard sums and applying
the lifting formula, we obtain
\begin{equation}\label{e:lift-triv-53}
\LiftC
=
\sum_{\tu\gamma}
a_{\tu\gamma}I(\tu\gamma),
\end{equation}
where \(a_{\tu\gamma}\) is the sum of the quantities
\[
\sgn(\gamma_i)C(H_i)
\]
over the stable standard sums whose lifts have parameter
\(\tu\gamma\). The nonzero parameters and the corresponding coefficients
are determined by Tables~\ref{tab:Spin53-H0}
and~\ref{tab:Spin53-H1-H2}.

The following lemma is a special low-rank case of
Lemma~\ref{l:comp-factor}.

\begin{lemma}\label{l:53-m-coeff}
Consider $\widetilde G=\widetilde{\Spin}(5,3),$
and set
\[
\widetilde\gamma_0
=
\widetilde\gamma(\{3\}^*)_{\chi_1},
\qquad
\widetilde\gamma_1
=
\widetilde\gamma(\{1,2\},\{3\}^*)_{\chi_1}.
\]
Then
\[
m(\widetilde\gamma_1,\widetilde\gamma_1)
=
m(\widetilde\gamma_1,\widetilde\gamma_0)
=
1.
\]
Moreover, among the standard modules appearing in
\eqref{e:lift-triv-53}, only
$I(\widetilde\gamma_1)$ and 
$I(\widetilde\gamma_0)$
contain \(J(\widetilde\gamma_1)\).
\end{lemma}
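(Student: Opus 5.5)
The plan is to deduce everything from Lemma~\ref{l:comp-factor}, specialized to $p=2$, and then to read off the list of lifted standard modules from Tables~\ref{tab:Spin53-H0} and~\ref{tab:Spin53-H1-H2}.

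\emph{Specialization.} For $\tu G=\tu\Spin(5,3)$ we have $p=2$. Hence $S_1=\{e_1-e_2\}$ and $\tu\gamma_\sharp=\tu\gamma(\{3\}^*)=\tu\gamma_0$. Since $e_1-e_2\in\Delta_{1/2}^+$ is real for $\tu\gamma_0$, its inverse Cayley transform is $c_{S_1}(\tu\gamma_\sharp)=\tu\gamma(\{1,2\},\{3\}^*)=\tu\gamma_1$. By Lemma~\ref{l:2p+12p-1-cenchar-comp}, both parameters lie in the $\chi_1$-block.

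\emph{Multiplicities.} The subsets $S'\subseteq S_1$ are $\emptyset$ and $S_1$, so Lemma~\ref{l:comp-factor}(1) gives the following. $J(\tu\gamma_1)$ is a composition factor of $I(\tu\gamma)$ exactly when $\tu\gamma\in\{\tu\gamma_0,\tu\gamma_1\}$, and in both cases the multiplicity is $1$. The equality $m(\tu\gamma_1,\tu\gamma_1)=1$ is also immediate from the Langlands classification. Independently of Lemma~\ref{l:comp-factor}, I would sanity-check $m(\tu\gamma_1,\tu\gamma_0)=1$ with a rank-one reduction. The root $e_1-e_2$ is a nonintegral real root for $\tu\gamma_0$ that is simple in $\Delta^+$. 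Theorem~\ref{t:transfun} together with the rank-one genuine $\widetilde{\SL}(2,\bbR)$ computation (the Hecht--Schmid type identity for a real root satisfying the parity condition) shows that $I(\tu\gamma_0)$ has exactly two composition factors, $J(\tu\gamma_0)$ and $J(\tu\gamma_1)$. I would phrase this as the $p=2$ instance of the argument of \cite[Lemma 7.2]{Ts23}.

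\emph{Matching against the tables.} I will go through the parameters in the ``Lift'' columns of Tables~\ref{tab:Spin53-H0} and~\ref{tab:Spin53-H1-H2}. Any lifted parameter outside the $\chi_1$-block is discarded at once, because composition factors preserve the central character. This rules out, for example, $\tu\gamma(\{2\}^*)$ and $\tu\gamma(\{4\}^*)$. For each remaining parameter I check that it is not $K$-conjugate to $\tu\gamma_0$ or $\tu\gamma_1$. Among the $H_2$-terms, only $\tu\gamma(\{3\}^*)$ (from $\gamma_{40}$) equals $\tu\gamma_0$; $\tu\gamma(\{1\}^*)$ has a different Lie-level parameter. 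Among the $H_1$-terms, only $\tu\gamma(\{1,2\},\{3\}^*)$ (from $\gamma_{36}$) equals $\tu\gamma_1$. Parameters such as $\tu\gamma(\{1,4\},\{3\}^*)$ and $\tu\gamma(\{-1,-4\},\{3\}^*)$ differ in which root is noncompact imaginary, so they are different regular characters. The $H_0$-lifts $\tu\gamma(\{1,3,4\}^*)$ and $\tu\gamma(\{2,3,4\}^*)$ live on the compact-type Cartan subgroup and are not of the form $c_{S'}(\tu\gamma_\sharp)$.

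\emph{Main obstacle.} The delicate point is this bookkeeping: I must confirm that the \atlas\ labels and the symbols $\{\pm i,\pm j\}$ really represent distinct $K$-conjugacy classes of genuine regular characters. Otherwise a second listed parameter could coincide with $\tu\gamma_1$ after a cross action. I would settle this using Proposition~\ref{prop:genuine-character-determined}(2). A genuine parameter with fixed $\chi$ is determined by its Cartan subgroup and by $\overline\gamma$ up to $K$-conjugacy, so it suffices to compare the pairs of imaginary and real roots recorded in each symbol.
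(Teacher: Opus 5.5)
Your main argument is correct, but it is not the route the paper takes. You obtain the lemma as the $p=2$ instance of Lemma~\ref{l:comp-factor}. With $S_1=\{e_1-e_2\}$ and $\tu\gamma_\sharp=\tu\gamma_0$, the ``if and only if'' statement together with the multiplicity-one assertion gives both claims at once. Because that iff ranges over all genuine standard modules, the table bookkeeping you call the main obstacle is not needed for this lemma. It is only needed to see that these two modules actually occur, and with which coefficients, which matters for Proposition~\ref{p:lift-triv-53}.

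The paper instead argues directly, without using Lemma~\ref{l:comp-factor} (which it states without proof, only by analogy with \cite{Ts23}):
\begin{itemize}
\item a length comparison cuts the standard modules in \eqref{e:lift-triv-53} that could contain $J(\tu\gamma_1)$ down to five;
\item the central character eliminates the three lying in the $\chi_2$-block;
\item $m(\tu\gamma_1,\tu\gamma_0)=1$ follows from $M(\tu\gamma_1,\tu\gamma_0)=-1$, computed via \cite[Proposition 7.10]{RT00} (a cross action by the nonintegral reflection $s_{e_3-e_4}$ reduces the KL polynomial to a diagonal one), and then inverting $\mathbf M\mathbf m=I$.
\end{itemize}
Your route is shorter. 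The paper's route makes the $\tu\Spin(5,3)$ example a genuine check of the general lemma rather than a consequence of it.

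Your proposed sanity check, however, is wrong and should be dropped or replaced by the paper's computation. $I(\tu\gamma_0)$ does not have exactly two composition factors. All four half-integral real roots $e_1\pm e_2$, $e_1\pm e_4$ for $\tu\gamma_0$ satisfy the genuine parity condition, because the genuine principal series of $\widetilde{\SL}(2,\bbR)$ reduces at every half-integral $\nu$. Induction in stages along each such $\beta$ then shows that $J(c_\beta(\tu\gamma_0))$ is a constituent of $I(\tu\gamma_0)$. An example is $J(\tu\gamma(\{1,4\},\{3\}^*))$, which you yourself list as distinct from $\tu\gamma_1$.

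Even along $e_1-e_2$ alone, the rank-one reduction only gives $[I(\tu\gamma_0)]=[\Ind(\text{rank-one Langlands quotient})]+[I(\tu\gamma_1)]$, hence $m(\tu\gamma_1,\tu\gamma_0)\ge 1$. Ruling out further copies of $J(\tu\gamma_1)$ requires a KL-type computation such as the paper's. Finally, Theorem~\ref{t:transfun} describes the images of irreducibles under nonintegral wall crossing; it does not by itself give composition series of standard modules.
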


\begin{proof}
By the length calculation, the only parameters occurring in
\eqref{e:lift-triv-53} that can possibly contribute to
\(J(\widetilde\gamma_1)\) are
\[
\widetilde\gamma_1,\quad
\widetilde\gamma_0,\quad
\widetilde\gamma_2,\quad
\widetilde\gamma_3,\quad
\widetilde\gamma_4,
\]
where
\[
\widetilde\gamma_2
=
\widetilde\gamma(\{1,2\},\{4\}^*)_{\chi_2},
\qquad
\widetilde\gamma_3
=
\widetilde\gamma(\{2,3\},\{4\}^*)_{\chi_2},
\qquad
\widetilde\gamma_4
=
\widetilde\gamma(\{4\}^*)_{\chi_2}.
\]
The parameters
\(\widetilde\gamma_0\) and \(\widetilde\gamma_1\) have genuine
central character \(\chi_1\), whereas
\(\widetilde\gamma_2,\widetilde\gamma_3,\widetilde\gamma_4\)
have genuine central character \(\chi_2\).
Since every irreducible constituent of a standard module has the
same genuine central character as the standard module, it follows
immediately that
\[
m(\widetilde\gamma_1,\widetilde\gamma_i)=0,
\qquad i=2,3,4.
\]
Thus only
\(I(\widetilde\gamma_1)\) and \(I(\widetilde\gamma_0)\)
can contain \(J(\widetilde\gamma_1)\).

Clearly,
$m(\widetilde\gamma_1,\widetilde\gamma_1)=1.$
It remains to compute
\(m(\widetilde\gamma_1,\widetilde\gamma_0)\).

Let $s=s_{e_3-e_4},$
the reflection through the nonintegral root \(e_3-e_4\).
By \cite[Proposition 7.10, Case II (iv)]{RT00},
\[
\begin{aligned}
M(\widetilde\gamma_1,\widetilde\gamma_0)
&=
-P_{\widetilde\gamma_1,\widetilde\gamma_0}(1)\\
&=
-P_{s\times\widetilde\gamma_1,
   (s\times\widetilde\gamma_0)_\beta}(1)\\
&=
-P_{s\times\widetilde\gamma_1,
   s\times\widetilde\gamma_1}(1)
=-1,
\end{aligned}
\]
where \(\beta=e_1-e_2\).

Let
\[
\mathbf M
=
\bigl(M(\widetilde\gamma,\widetilde\delta)\bigr),
\qquad
\mathbf m
=
\bigl(m(\widetilde\gamma,\widetilde\delta)\bigr).
\]
Since \(\mathbf M\mathbf m=I\), the
\((\widetilde\gamma_1,\widetilde\gamma_0)\)-entry gives
\[
0
=
M(\widetilde\gamma_1,\widetilde\gamma_1)
m(\widetilde\gamma_1,\widetilde\gamma_0)
+
M(\widetilde\gamma_1,\widetilde\gamma_0)
m(\widetilde\gamma_0,\widetilde\gamma_0).
\]
Since
$M(\widetilde\gamma_1,\widetilde\gamma_1)
=
m(\widetilde\gamma_0,\widetilde\gamma_0)
=1$
and
$M(\widetilde\gamma_1,\widetilde\gamma_0)=-1,$
we obtain
\[
m(\widetilde\gamma_1,\widetilde\gamma_0)=1.
\]
\end{proof}

\begin{prop}\label{p:lift-triv-53}
Let
$G=\Spin(5,3),$ and 
$\tu G=\tu{\Spin}(5,3),$
and let \(\chi_1,\chi_2\) be the two \(\rho/2\)-compatible genuine central characters, as in
Proposition~\ref{prop:centers-compatible-central-characters}.
Then
\[
\LiftC
=
J(\{4\}^*)_{\chi_2}
=
\Omega_2.
\]
\end{prop}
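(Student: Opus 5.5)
The plan is to compute $\LiftC$ directly from \eqref{e:lift-triv-53}, one central-character block at a time. By Theorem~\ref{t:lift-trivial-small} and Theorem~\ref{t:small-exhaustion}, every constituent of $\LiftC$ lies in $\mathcal R_{\rho/2}(\tu G)$. Here $p=2$, so this set is
\[
\{J(\{1,2\},\{3\}^*)_{\chi_1},\; J(\{4\}^*)_{\chi_2}\}.
\]
It is therefore enough to find the coefficients of these two irreducibles, and to know that all other irreducibles have coefficient zero, which the cited theorems already guarantee.

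First, the coefficients $a_{\tu\gamma}$ in \eqref{e:lift-triv-53} are read off Tables~\ref{tab:Spin53-H0} and~\ref{tab:Spin53-H1-H2}. Each $H_0$ stable sum contributes $\sgn\cdot C(H_0)=\mp 2$ to $I(\tu\gamma(\{1,3,4\}^*))$ and $I(\tu\gamma(\{2,3,4\}^*))$ respectively. Each $H_1$ and $H_2$ term contributes $\sgn(\gamma_i)$ to its lifted parameter; the lifts marked $0$ are discarded. In particular $a_{\tu\gamma_1}=+1$ from $\gamma_{36}$, and $a_{\tu\gamma_0}=-1$ from $\gamma_{40}$. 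For the $\chi_2$-block, $I(\tu\gamma(\{4\}^*))$ appears with coefficient $+1$ from $\gamma_{45}$.

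For $J(\tu\gamma_1)$, Lemma~\ref{l:53-m-coeff} says only $I(\tu\gamma_1)$ and $I(\tu\gamma_0)$ among the terms of \eqref{e:lift-triv-53} contain it, each with multiplicity one. The coefficient is then $a_{\tu\gamma_1}+a_{\tu\gamma_0}=1-1=0$, matching the $p=2$ case of $K_{S_1}=0$.

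For $J(\tu\gamma(\{4\}^*))_{\chi_2}$, I would argue as in part (a) of Theorem~\ref{t:main}. The parameter $\tu\gamma(\{4\}^*)$ has maximal length in the $\chi_2$-block, and only $I(\tu\gamma(\{4\}^*))$ itself contains $J(\{4\}^*)$. Its competitor $\tu\gamma(\{2\}^*)$ has the same length, so the multiplicity $m(\tu\gamma(\{4\}^*),\tu\gamma(\{2\}^*))$ must be shown to vanish; equal-length distinct parameters cannot be linked in the Bruhat order, so this multiplicity is zero. The coefficient is thus $1$, and $\LiftC=\Omega_2$.

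The main obstacle is exactly this step: confirming that no other lifted standard module in the $\chi_2$-block, including the $H_0$ terms with coefficient $\pm 2$, contains $J(\{4\}^*)$. I would settle it by the length argument: every $\chi_2$ parameter on $H_0$ or $H_1$ is shorter than $\tu\gamma(\{4\}^*)$, so its standard module cannot have $J(\{4\}^*)$ as a composition factor.
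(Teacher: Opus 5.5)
Your proposal is correct and follows the paper's proof essentially verbatim. You restrict to the two candidates $\Omega_2$ and $\Phi_1$, get coefficient $1$ for $\Omega_2$ by triangularity (only $I(\tu\gamma(\{4\}^*))$ can contain it), and get $C(H_1)-C(H_2)=1-1=0$ for $\Phi_1$ from Lemma~\ref{l:53-m-coeff}.

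One small slip: $\tu\gamma(\{2\}^*)$ does not have the same length as $\tu\gamma(\{4\}^*)$; under Definition~\ref{d:length} their lengths are $11/2$ and $15/2$, so $m(\tu\gamma(\{4\}^*),\tu\gamma(\{2\}^*))=0$ follows directly from triangularity, although your argument covers either case.
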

\begin{proof}
By Theorem~\ref{t:lift-trivial-small}, the only possible irreducible
constituents of \(\LiftC\) are
\[
J(\{4\}^*)_{\chi_2}=\Omega_2
\qquad\text{and}\qquad
J(\{1,2\},\{3\}^*)_{\chi_1}=\Phi_1.
\]

We first compute the coefficient of
\(J(\{4\}^*)_{\chi_2}\) in \eqref{e:lift-triv-53}.
By triangularity, the only standard module occurring in
\eqref{e:lift-triv-53} that can contain
\(J(\{4\}^*)_{\chi_2}\) is $I(\{4\}^*)_{\chi_2}.$
Since this standard module occurs with coefficient \(1\),
\(J(\{4\}^*)_{\chi_2}\) occurs in \(\LiftC\) with coefficient \(1\).

Next consider
$J(\{1,2\},\{3\}^*)_{\chi_1}.$
By Lemma~\ref{l:53-m-coeff}, among the standard modules occurring in
\eqref{e:lift-triv-53}, precisely
\[
I(\{1,2\},\{3\}^*)_{\chi_1}
\qquad\text{and}\qquad
I(\{3\}^*)_{\chi_1}
\]
contain this irreducible representation, in each case with
multiplicity one. By Table~\ref{tab:Spin53-H1-H2}, their coefficients
in \eqref{e:lift-triv-53} are respectively
\[
C(H_1)
\qquad\text{and}\qquad
-C(H_2).
\]
Since
$C(H_1)=C(H_2)=1,$
the coefficient of \(J(\{1,2\},\{3\}^*)_{\chi_1}\) is
\[
C(H_1)-C(H_2)=0.
\]
Consequently,
\[
\LiftC
=
J(\{4\}^*)_{\chi_2}
=
\Omega_2.
\]
\end{proof}

\section{$\tu K$-types, Associated Varieties and Unitarity}

In \cite{BTs18}, the $\tu K$-types of certain unipotent representations
of nonlinear real Spin groups were computed.  In this final section,
included mainly for completeness and reference, we identify the small
representations studied in this paper with those appearing in
\cite{BTs18} and translate the corresponding $\tu K$-type formulas
into our notation.  We then recall their real associated varieties
and unitarity properties.

\subsection{\(\tu K\)-spectra of small representations}

The complete $\tu K$-spectra of the representations considered here
can be obtained from \cite[Theorem~4.1]{BTs18}.  We translate the
formulas there into the notation of the present paper.

For $\tu G=\tu{\Spin}(2p+2,2p)$, we first record the lowest
$\tu K$-types in order to identify our representations
$\Omega_i,\Phi_i,\Xi_i$ with the corresponding representations in
\cite[Theorem~4.1]{BTs18}.

\begin{prop}\label{p:lowest-K-types-even-even}
Let
\[
\tu G=\tu{\Spin}(2p+2,2p),
\qquad
\tu K=\Spin(2p+2)\times\Spin(2p),
\]
where \(p\geq 1\). Let \(\chi_2\) and \(\chi_4\) be the two $\rho/2$-compatible  genuine 
central characters (see Proposition \ref{prop:centers-compatible-central-characters}), and write
\[
\Omega_i=\Omega_{\chi_i},
\qquad
\Phi_i=\Phi_{\chi_i},
\qquad
\Xi_i=\Xi_{\chi_i},
\qquad i\in\{2,4\}.
\]

Set \(\varepsilon_p=(-1)^p\). 
The lowest
\(\tu K\)-types are given by the following table.  The \(\Omega\)- and \(\Phi\)-rows in the
following table are valid for \(p\geq1\), while the \(\Xi\)-row is valid
for \(p\geq2\). 
\[
\renewcommand{\arraystretch}{1.8}
\begin{array}{c@{\qquad}c@{\qquad}c}
\toprule
&
\text{central character }\chi_2
&
\text{central character }\chi_4
\\
\midrule
\Omega
&
\displaystyle
\left(
0^{p+1}
\ \middle|\
\left(\frac12\right)^p
\right)
&
\displaystyle
\left(
0^{p+1}
\ \middle|\
\left(\frac12\right)^{p-1},-\frac12
\right)
\\[2mm]
\Phi
&
\displaystyle
\left(
0^{p+1}
\ \middle|\
2-\frac{\varepsilon_p}{2},
\left(\frac32\right)^{p-1}
\right)
&
\displaystyle
\left(
0^{p+1}
\ \middle|\
2+\frac{\varepsilon_p}{2},
\left(\frac32\right)^{p-1}
\right)
\\[2mm]
\Xi
&
\displaystyle
\left(
0^{p+1}
\ \middle|\
2+\frac{\varepsilon_p}{2},
\left(\frac32\right)^{p-2},
-\frac32
\right)
&
\displaystyle
\left(
0^{p+1}
\ \middle|\
2-\frac{\varepsilon_p}{2},
\left(\frac32\right)^{p-2},
-\frac32
\right)
\\
\bottomrule
\end{array}
\]
For \(p=1\), the lowest \(\tu K\)-types of the \(\Xi\)-representations are
\[
\Xi_2:\quad
\left(0^2\mid-\frac32\right),
\qquad
\Xi_4:\quad
\left(0^2\mid-\frac52\right).
\]

Here the entry in the \(\chi_i\)-column and the \(\Omega\), \(\Phi\), or
\(\Xi\) row denotes the lowest \(\tu K\)-type of
\(\Omega_i\), \(\Phi_i\), or \(\Xi_i\), respectively.

\begin{proof}
Fix the standard positive compact root system for
$\tu K=\Spin(2p+2)\times\Spin(2p).$
The result follows by applying the minimal \(K\)-type formula of
\cite{Kn86} to the Langlands parameters defining
the $\Omega, \Phi, \Xi$. 
The formula applies to the present nonlinear double cover in the same way as
in the split case, since the calculation depends only on the root data and
the lowest \(\tu K\cap\tu M\)-types of the inducing representations.
Substitution of the six Langlands parameters into the formula gives the
highest weights displayed in the table.
\end{proof}

\end{prop}

\begin{thm}\label{thm:K-types-even-even}
Let
\[
\tu G=\tu{\Spin}(2p+2,2p),
\qquad
\tu K=\Spin(2p+2)\times\Spin(2p),
\]
where \(p\geq 1\). We parametrize an irreducible \(\tu K\)-type by its
highest weight
\[
(\lambda_1,\ldots,\lambda_{p+1}
 \mid
 \mu_1,\ldots,\mu_p).
\]
Then the \(\tu K\)-types of the six representations
\[
\Omega_2,\quad \Omega_4,\quad
\Phi_2,\quad \Phi_4,\quad
\Xi_2,\quad \Xi_4
\]
are described as follows.

\begin{enumerate}
\item The \(\tu K\)-types of  \(\Omega_2\) and \(\Omega_4\) are of the form
\begin{equation}\label{e:2p2-2p-K}
\left(
\beta_1,\ldots,\beta_p,0
\ \middle|\
\delta_1,\ldots,\delta_p
\right),
\end{equation}
where
\[
\beta_1\geq\cdots\geq\beta_p\geq0,
\qquad
\beta_j\in\bbZ,
\qquad
\delta_j\in\bbZ+\frac12,
\]
and
\[
\beta_1+\frac12
\geq\delta_1
\geq\beta_2+\frac12
\geq\delta_2
\geq\cdots
\geq\beta_p+\frac12
\geq|\delta_p|.
\]
Moreover, the \(\tu K\)-type in \eqref{e:2p2-2p-K} occurs in
\(\Omega_2\) if and only if
\[
\sum_{j=1}^p
\left(\beta_j+\delta_j-\frac12\right)
\in 2\bbZ,
\]
and it occurs in \(\Omega_4\) if and only if
\[
\sum_{j=1}^p
\left(\beta_j+\delta_j-\frac12\right)
\in 2\bbZ+1.
\]

\item 
The \(\tu K\)-types of \(\Phi_2\)  and $\Phi_4$  are of the form
\begin{equation}\label{e:2p2-2p-Phi}
\left(
\delta_1,\ldots,\delta_p,0
\ \middle|\
\beta_1+\frac32,\ldots,\beta_p+\frac32
\right),
\end{equation}
where \(\beta_j,\delta_j\in\bbZ\) satisfy
\[
\beta_1
\geq\delta_1
\geq\beta_2
\geq\delta_2
\geq\cdots
\geq\beta_p
\geq\delta_p
\geq0.
\]
Moreover, the \(\tu K\)-type in \eqref{e:2p2-2p-Phi} occurs in
\(\Phi_2\) if and only if
\[
\sum_{j=1}^p
\left(\beta_j+\delta_j-1\right)
\in 2\bbZ,
\]
and it occurs in \(\Phi_4\) if and only if
\[
\sum_{j=1}^p
\left(\beta_j+\delta_j-1\right)
\in 2\bbZ+1.
\]

\item The \(\tu K\)-types of \(\Xi_2\) and $\Xi_4$ are of the form
\begin{equation} \label{e:2p2-2p-Xi}
    \left(
\delta_1,\ldots,\delta_p,0
\ \middle|\
\beta_1+\frac32,\ldots,\beta_{p-1}+\frac32,
-\left(\beta_p+\frac32\right)
\right),
\end{equation}

where \(\beta_j,\delta_j\in\bbZ\) satisfy
\[
\beta_1
\geq\delta_1
\geq\beta_2
\geq\delta_2
\geq\cdots
\geq\beta_p
\geq\delta_p
\geq0.
\]

Moreover, the \(\tu K\)-type in \eqref{e:2p2-2p-Xi} occurs in
\(\Xi_2\) if and only if
\[
\sum_{j=1}^p
\left(\beta_j+\delta_j-1\right)
\in 2\bbZ+1,
\]
and it occurs in \(\Xi_4\) if and only if
\[
\sum_{j=1}^p
\left(\beta_j+\delta_j-1\right)
\in 2\bbZ.
\]

\end{enumerate}

Each of the above \(\tu K\)-types occurs with multiplicity one.
\begin{proof}

In the notation of \cite[Theorem~4.1, Case~2]{BTs18}, we take
\[
P=p+1,\qquad Q=p,\qquad r_+=P-Q=1.
\]
The three pairs of representations denoted there by
\[
\tau_i,\qquad \pi_i,\qquad \sigma_i
\]
are identified, respectively, with
\[
\Omega_i,\qquad \Phi_i,\qquad \Xi_i,
\qquad i\in\{2,4\},
\]
by comparing their genuine central characters and their lowest
\(\tu K\)-types, as computed in Proposition
\ref{p:lowest-K-types-even-even}.

    The \(\tu K\)-type formulas for the representations \(\tau_i\) are obtained in
\cite{BTs18} by restricting the small representations of
\(\tu{\Spin}(2p+2,2p+1)\) studied in \cite{LS08} to
\(\tu{\Spin}(2p+2,2p)\), whereas the formulas for the representations
\(\pi_i\) and \(\sigma_i\) are obtained by restricting the corresponding
representations of \(\tu{\Spin}(2p+3,2p)\).
Translating the parity conditions into our labeling of the central characters
\(\chi_2\) and \(\chi_4\), and incorporating the corrections recorded in
Remark~\ref{r:BTs18-corrections}, gives the formulas stated above.
\end{proof}

\end{thm}
\begin{remark}\label{r:BTs18-corrections}
Our labeling is normalized so that the representation with lowest
\(\tu K\)-type
\[
\left(
0^{p+1}
\ \middle|\
\left(\frac12\right)^p
\right)
\]
has central character \(\chi_2\), while the representation with lowest
\(\tu K\)-type
\[
\left(
0^{p+1}
\ \middle|\
\left(\frac12\right)^{p-1},-\frac12
\right)
\]
has central character \(\chi_4\). Accordingly, the parity conditions in
\cite[Theorem~4.1, Case~2]{BTs18} are relabeled to agree with this convention.
We have also corrected the corresponding parity assignments for the
\(\Phi\)- and \(\Xi\)-families.
\end{remark}

\begin{comment}

\begin{prop}\label{p:lowest-K-types-2p1}
Let
\[
\tu G=\tu{\Spin}(2p+1,2p-1),
\qquad
\tu K=\Spin(2p+1)\times\Spin(2p-1),
\]
where \(p\geq 2\). Let \(\chi_1\) and \(\chi_2\) be the two $\rho/2$-compatible  genuine 
central characters (see Proposition \ref{prop:centers-compatible-central-characters}), and write
\[
\Omega_2=\Omega_{\chi_2},
\qquad
\Phi_1=\Phi_{\chi_1}.
\]

The lowest
\(\tu K\)-types of these representations are given by
\[
\renewcommand{\arraystretch}{1.8}
\begin{array}{c@{\qquad}c@{\qquad}c}
\toprule
&
\text{central character }\chi_1
&
\text{central character }\chi_2
\\
\midrule
\Omega
&&
\displaystyle
\left(
0^{p}
\ \middle|\
\left(\frac12\right)^{p-1} 
\right)
\\[2mm]
\Phi
&
\displaystyle
\left(
\left(\frac12\right)^p
\ \middle|\
1^{p-1}
\right)
&
\\
\bottomrule
\end{array}
\]
The empty entries reflect the fact that, with the notation of
Notation~\ref{n:small}, only \(\Phi_1\) and \(\Omega_2\) occur for
this group.
\end{prop}
\end{comment}

\begin{thm}\label{thm:K-types-2p1}
Let
\[
\tu G=\tu{\Spin}(2p+1,2p-1),
\qquad
\tu K=\Spin(2p+1)\times\Spin(2p-1),
\]
where \(p\geq2\). We parametrize an irreducible \(\tu K\)-type by its
highest weight
\[
(\beta_1,\ldots,\beta_p
 \mid
 \delta_1,\ldots,\delta_{p-1}).
\]

Then the \(\tu K\)-types of the two small representations
\(\Phi_1\) and \(\Omega_2\) are described as follows.

\begin{enumerate}

\item
The \(\tu K\)-types of \(\Phi_1\) are precisely those of the form
\begin{equation}\label{e:2p1-2p1-chi1}
\left(
\beta_1,\ldots,\beta_p
\ \middle|\
\delta_1,\ldots,\delta_{p-1}
\right),
\end{equation}
where
\[
\beta_j\in\bbZ+\frac12,
\qquad
\delta_j\in\bbZ,
\qquad
\beta_p\geq0,
\]
and
\[
\beta_1+\frac12
\geq\delta_1
\geq\beta_2+\frac12
\geq\delta_2
\geq\cdots
\geq\beta_{p-1}+\frac12
\geq\delta_{p-1}
\geq\beta_p+\frac12.
\]

\item
The \(\tu K\)-types of \(\Omega_2\) are precisely those of the form
\begin{equation}\label{e:2p1-2p1-chi2}
\left(
\beta_1,\ldots,\beta_p
\ \middle|\
\delta_1,\ldots,\delta_{p-1}
\right),
\end{equation}
where
\[
\beta_j\in\bbZ,
\qquad
\delta_j\in\bbZ+\frac12,
\qquad
\beta_p\geq0,
\]
and
\[
\beta_1+\frac12
\geq\delta_1
\geq\beta_2+\frac12
\geq\delta_2
\geq\cdots
\geq\beta_{p-1}+\frac12
\geq\delta_{p-1}
\geq\beta_p+\frac12.
\]

\end{enumerate}
\begin{proof}

This is Case~5 of \cite[Theorem~4.1]{BTs18}, with $q=p$ and hence
$r_+=p-q=0$.  The two representations occurring there are denoted
by $\pi_1$ and $\pi_2$.  Their $\tu K$-types have, respectively, the
integrality patterns
\[
\bbZ\mid\left(\bbZ+\frac12\right),
\qquad
\left(\bbZ+\frac12\right)\mid\bbZ.
\]
By Lemma~\ref{l:central-char}, these correspond to the genuine central
characters $\chi_2$ and $\chi_1$, respectively.  Since
$\Omega_2$ and $\Phi_1$ are the unique small representations with
these central characters, we obtain
\[
\pi_1=\Omega_2,\qquad \pi_2=\Phi_1.
\]
The stated $\tu K$-type formulas then follow from
\cite[Theorem~4.1, Case~5]{BTs18}.
\end{proof}

\end{thm}

\begin{comment}
    
 \begin{remark}\label{r:BTs18-corrections-2p1}
There is a correction to
\cite[Theorem~4.1, Case~5 with \(r_+=0\)]{BTs18}.
For
\[
\tu G=\tu{\Spin}(2p+1,2p-1),
\]
only two representations, denoted there by \(\pi_1\) and \(\pi_2\), are
listed, one for each \(\rho/2\)-compatible genuine central character.
In fact, each of the two \(\tu K\)-spectra displayed there splits into
two disjoint parts, distinguished by a parity condition. These four
parts are the \(\tu K\)-spectra of
\[
\Omega_1,\qquad \Phi_1,\qquad
\Omega_2,\qquad \Phi_2.
\]
More precisely,
\[
\left.\pi_i\right|_{\tu K}
=
\left.\Omega_i\right|_{\tu K}
\oplus
\left.\Phi_i\right|_{\tu K},
\qquad i\in\{1,2\},
\]
where the two summands are distinguished by the parity conditions in
Theorem~\ref{thm:K-types-2p1}.

Consequently, there are four irreducible representations in this case,
rather than the two listed in \cite[Theorem~4.1]{BTs18}.
In \cite{BTs18}, we also counted the number of small representations
associated with each \(\rho/2\)-compatible genuine central character.
The count given there differs from the one obtained in the present paper.
We refer the reader to Sections~\ref{s:central-char} and~\ref{s:counting}
for a revised version of that discussion.

Accordingly, the value \(n_{\cal O}=2\) in
\cite[Theorem~6.17, Cases~5 and~6]{BTs18}, for
\(\tu{\Spin}(2p+1,2p-1)\), should be replaced by
\[
n_{\cal O}=4.
\]
\end{remark}
\end{comment}

\begin{thm}\label{thm:K-types-2p3}
Let
\[
\tu G=\tu{\Spin}(2p+3,2p-1),
\qquad
\tu K=\Spin(2p+3)\times\Spin(2p-1),
\]
where \(p\geq2\). Let \(\Phi_2\) denote the unique small representation in
\(\prod_{\rho/2}^s(\tu G)\), as in Notation~\ref{n:small}.

We parametrize an irreducible \(\tu K\)-type by its highest weight
\[
(\lambda_1,\ldots,\lambda_{p+1}
 \mid
 \mu_1,\ldots,\mu_{p-1}).
\]
Then the \(\tu K\)-types of \(\Phi_2\) are of the form
\begin{equation}\label{e:2p3-2p1-K}
\left(
\beta_1,\ldots,\beta_p,0
\ \middle|\
\delta_1,\ldots,\delta_{p-1}
\right),
\end{equation}
where
\[
\beta_1\geq\cdots\geq\beta_p\geq0,
\qquad
\beta_j\in\bbZ,
\qquad
\delta_j\in\bbZ+\frac12,
\]
and
\[
\beta_1+\frac32
\geq\delta_1
\geq\beta_2+\frac32
\geq\delta_2
\geq\cdots
\geq\beta_{p-1}+\frac32
\geq\delta_{p-1}
\geq\beta_p+\frac32.
\]
\end{thm}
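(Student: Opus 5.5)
Our approach runs parallel to the proofs of Theorems~\ref{thm:K-types-even-even} and~\ref{thm:K-types-2p1}: we identify $\Phi_2$ with a representation whose $\tu K$-spectrum is already computed in \cite[Theorem~4.1]{BTs18}, and then translate the formula there into our coordinates. The relevant case of \cite[Theorem~4.1]{BTs18} is the unequal-rank family with $P=p+1$, $Q=p-1$ (odd--odd signature $2p+3,2p-1$), so $r_+=P-Q=2$. By Proposition~\ref{prop:centers-compatible-central-characters}, $\widehat Z_{\rho/2}(\tu G)=\{\chi_2\}$. By Theorem~\ref{t:small-exhaustion} together with Remark~\ref{r:number}, $\prod^s_{\rho/2}(\tu G)=\{\Phi_2\}$ consists of a single representation. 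Hence any genuine small representation with infinitesimal character $\rho/2$ listed in \cite{BTs18} for this group must equal $\Phi_2$, and no lowest $\tu K$-type comparison is needed beyond a consistency check.

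The key steps are as follows. First, we locate in \cite[Theorem~4.1]{BTs18} the representation(s) of $\tu{\Spin}(2p+3,2p-1)$ with infinitesimal character $\rho/2$. We check that their $\tu K$-types have integrality pattern $\bbZ\mid(\bbZ+\frac12)$, so that by Lemma~\ref{l:central-char}(2) the central character is $\chi_2$, which agrees with our normalization. Second, we conclude by uniqueness that this representation is $\Phi_2$. As a sanity check, we compute the lowest $\tu K$-type of $\Phi_2$ from the Langlands parameter $c_{S_1}(\tu\gamma(\{2p-1,2p+1\}^*))$ via the minimal $K$-type formula of \cite{Kn86}, exactly as in Proposition~\ref{p:lowest-K-types-even-even}. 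We expect it to be $(0^{p+1}\mid (\frac32)^{p-1})$, which matches the minimal element $\beta=0$, $\delta_j=\frac32$ of \eqref{e:2p3-2p1-K}.

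Third, we translate the description of \cite{BTs18}. There the $\tu K$-spectrum arises by restricting a small representation of a larger group, namely $\tu{\Spin}(2p+3,2p)$ (or a Lee--Stewart type representation from \cite{LS08}), to $\tu{\Spin}(2p+3,2p-1)$. The $\tu K$-types of the larger group have the shape $(\beta_1,\ldots,\beta_p,0\mid\cdot)$ on the $\Spin(2p+3)$ side. Branching $\Spin(2p)\downarrow\Spin(2p-1)$ by the classical interlacing rule then produces $\delta_1,\ldots,\delta_{p-1}\in\bbZ+\frac12$ interlacing the shifted weights $\beta_j+\frac32$. Because the inequalities already force $\delta_{p-1}\geq\beta_p+\frac32>0$, no sign ambiguity arises, and no parity condition splits the spectrum, since there is only one central character.

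The main obstacle is the bookkeeping in this last step. We must confirm that the shift is $\frac32$ rather than $\frac12$ (compare the $\frac12$ shift for $\Omega_2$ in Theorem~\ref{thm:K-types-2p1}), and that the branching from $\Spin(2p)$ to $\Spin(2p-1)$ collapses multiplicities exactly as stated. We would first verify the formula directly in the smallest case $p=2$, i.e.\ $\tu{\Spin}(7,3)$, against an \atlas\ computation of the first few $\tu K$-types of $\Phi_2$. Only then would we adopt the general translation.
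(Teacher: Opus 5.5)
Your argument is essentially the paper's. Since $\prod\nolimits^s_{\rho/2}(\tu G)=\{\Phi_2\}$ with central character $\chi_2$ (Proposition~\ref{prop:centers-compatible-central-characters} and Theorem~\ref{t:small-exhaustion}), $\Phi_2$ must be the single representation in \cite[Theorem~4.1, Case~5]{BTs18} with that central character, and the $\tu K$-spectrum is read off from there; your lowest $\tu K$-type check and branching reconstruction are optional consistency checks. One bookkeeping correction: in the convention of \cite{BTs18} used in this paper, $\tu{\Spin}(2p+3,2p-1)$ corresponds to $P=p+1$, $Q=p$, $r_+=1$, not $Q=p-1$, $r_+=2$ (compare the proof of Theorem~\ref{thm:K-types-2p1}, where $\tu{\Spin}(2p+1,2p-1)$ has $q=p$, $r_+=0$). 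Reading off the formula with $r_+=2$ would give the answer for a different group.
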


\begin{proof}
This is Case~5 of \cite[Theorem~4.1]{BTs18}, with
$P=p+1$, $Q=p$, and hence $r_+=1$.
In this case there is a single representation with the relevant
genuine central character. By the counting result in Section~\ref{s:counting} and the
central-character computation in Section~\ref{s:central-char}, the unique small
representation in the present paper is $\Phi_2$.  Hence the
$\tu K$-spectrum stated above is exactly the one given in
\cite[Theorem~4.1, Case~5]{BTs18}.
\end{proof}

\subsection{Real associated varieties}

Let \(\mathcal O_{\bbC}\) denote the complex nilpotent orbit associated with
the groups under consideration. Thus,
\begin{align*}
\mathcal O_{\bbC}
&=
[3\,2^{2p-2}\,1],
&&
\text{if }
\tu G=\tu{\Spin}(2p+1,2p-1),\\
\mathcal O_{\bbC}
&=
[3\,2^{2p-2}\,1^3],
&&
\text{if }
\tu G=\tu{\Spin}(2p+2,2p)
\text{ or }
\tu{\Spin}(2p+3,2p-1).
\end{align*}

We label the real forms \(\mathcal O_{\bbR}\) of \(\mathcal O_{\bbC}\)
by their signed Young diagrams; see \cite{CM93}. In particular, a real
orbit with underlying partition
\[
[3\,2^{2p-2}\,1^k]
\]
is written in the form
\[
[3^\epsilon\,2^{2p-2}\,1^{+,a}1^{-,b}],
\]
possibly with an additional label \(\mathrm{I}\) or \(\mathrm{II}\).
Here \(3^\epsilon\) denotes the row of length \(3\) beginning with the
sign \(\epsilon\), while \(1^{+,a}\) denotes \(a\) rows of length \(1\)
labeled \(+\); the exponent \(a\) is omitted when \(a=1\). The notation
\(1^{-,b}\) is defined similarly.

Based on \cite[Section~4.3]{BTs18}, the following tables record the small
representations according to the pairs
\[
(\mathcal O_{\bbR},\chi),
\]
where \(\mathcal O_{\bbR}\) is a real form of \(\mathcal O_{\bbC}\) and
\(\chi\) is a genuine central character compatible with \(\rho/2\).
An entry \(\pi\) in the \(\mathcal O_{\bbR}\)-column and the \(\chi\)-row
means that
\[
\operatorname{AV}_{\bbR}(\pi)
=
\overline{\mathcal O_{\bbR}}
\]
and that the central character of \(\pi\) is \(\chi\). When more than one
representation appears in an entry, all of them have the same real
associated variety and central character.

\subsubsection{The case \(\tu{\Spin}(2p+1,2p-1)\)}

the only real form of
\[
\mathcal O_{\bbC}=[3\,2^{2p-2}\,1]
\]
whose closure occurs as the real associated variety  of a small representation is
\[
\mathcal O_{\bbR}
=
[3^+\,2^{2p-2}\,1^+].
\]
In fact, both small representations have real associated variety
\(\overline{\mathcal O_{\bbR}}\).

\[
\renewcommand{\arraystretch}{1.6}
\begin{array}{c@{\qquad}c}
\toprule
&
\substack{
\mathcal O_{\bbR}\\
=[3^+\,2^{2p-2}\,1^+]
}
\\
\midrule
\chi_1
&
 \Phi_1
\\
\chi_2
&
\Omega_2
\\
\bottomrule
\end{array}
\]

\subsubsection{The case \(\tu{\Spin}(2p+2,2p)\)}

The real forms of
\[
\mathcal O_{\bbC}=[3\,2^{2p-2}\,1^3]
\]
whose closures occur as the real associated varieties of small representations are
\[
\begin{aligned}
\mathcal O_{\bbR,1}
&=
[3^+\,2^{2p-2}\,1^-\,1^{+,2}],\\
\mathcal O_{\bbR,2}
&=
[3^-\,2^{2p-2}\,1^{+,3}]_{\mathrm{I}},\\
\mathcal O_{\bbR,3}
&=
[3^-\,2^{2p-2}\,1^{+,3}]_{\mathrm{II}}.
\end{aligned}
\]

\[
\renewcommand{\arraystretch}{1.6}
\begin{array}{
c@{\qquad}
c@{\qquad}
c@{\qquad}
c
}
\toprule
&
\substack{
\mathcal O_{\bbR,1}\\
=[3^+\,2^{2p-2}\,1^-\,1^{+,2}]
}
&
\substack{
\mathcal O_{\bbR,2}\\
=[3^-\,2^{2p-2}\,1^{+,3}]_{\mathrm{I}}
}
&
\substack{
\mathcal O_{\bbR,3}\\
=[3^-\,2^{2p-2}\,1^{+,3}]_{\mathrm{II}}
}
\\
\midrule
\chi_2
&
\Omega_2
&
\Phi_2
&
\Xi_2
\\
\chi_4
&
\Omega_4
&
\Phi_4
&
\Xi_4
\\
\bottomrule
\end{array}
\]

\subsubsection{The case \(\tu{\Spin}(2p+3,2p-1)\)}

The only real form of
\[
\mathcal O_{\bbC}=[3\,2^{2p-2}\,1^3]
\]
whose closure occurs as the real associated variety of a small representation is
\[
\mathcal O_{\bbR}
=
[3^+\,2^{2p-2}\,1^{+,3}].
\]

\[
\renewcommand{\arraystretch}{1.6}
\begin{array}{c@{\qquad}c}
\toprule
&
\substack{
\mathcal O_{\bbR}\\
=[3^+\,2^{2p-2}\,1^{+,3}]
}
\\
\midrule
\chi_2
&
\Phi_2
\\
\bottomrule
\end{array}
\]

\begin{remark}
For \(\tu{\Spin}(2p+2,2p)\), the map
\[
\pi\longmapsto
\bigl(\mathcal O_{\bbR}(\pi),\chi_\pi\bigr)
\]
gives a bijection between the six small representations and the six pairs
\[
\{\mathcal O_{\bbR,1},\mathcal O_{\bbR,2},\mathcal O_{\bbR,3}\}
\times
\{\chi_2,\chi_4\}.
\]
For $\tu G=\tu{\Spin}(2p+1,2p-1)$, both small representations
have the same real associated variety, but they have different genuine
central characters.  Hence the map above gives a bijection
\[
\{\Phi_1,\Omega_2\}
\longrightarrow
\{\mathcal O_{\bbR}\}\times\{\chi_1,\chi_2\}.
\]
 For
\(\tu{\Spin}(2p+3,2p-1)\), its image consists of the single pair displayed
in the table.

\end{remark}
\subsection{Unitarity}

We conclude this section by recording the unitarity of the small
representations considered above.

\begin{prop}\label{p:unitarity-small}
For each of the groups
\[
\tu{\Spin}(2p+1,2p-1),\qquad
\tu{\Spin}(2p+2,2p),\qquad
\tu{\Spin}(2p+3,2p-1),
\]
all representations in \(\prod_{\rho/2}^s(\tu G)\) are unitary.
\end{prop}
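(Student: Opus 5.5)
The plan is to deduce unitarity from the realization of these representations in \cite{BTs18} as pieces of restrictions of unitary representations of larger nonlinear Spin groups, rather than to prove it directly from Langlands parameters. By Theorem~\ref{t:small-exhaustion}, $\prod_{\rho/2}^s(\tu G)=\mathcal R_{\rho/2}(\tu G)$ in each case. By Theorems~\ref{thm:K-types-even-even}, \ref{thm:K-types-2p1} and \ref{thm:K-types-2p3}, every element of this set has already been identified with a representation $\tau_i,\pi_i,\sigma_i$ (respectively $\pi_1,\pi_2$, respectively the unique Case~5 representation) of \cite[Theorem~4.1]{BTs18}. The identification uses the central character, the lowest $\tu K$-type, and the full multiplicity-free $\tu K$-spectrum. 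So it suffices to know that the representations of \cite{BTs18} are unitary.

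Step one is to recall the construction in \cite{BTs18}. For $\tu{\Spin}(2p+2,2p)$, the $\tau_i$ arise by restricting the small genuine unitary representations of $\tu{\Spin}(2p+2,2p+1)$ studied in \cite{LS08}. The $\pi_i$ and $\sigma_i$ arise by restricting the analogous unitary representations of $\tu{\Spin}(2p+3,2p)$. The unequal-rank groups $\tu{\Spin}(2p+1,2p-1)$ and $\tu{\Spin}(2p+3,2p-1)$ are handled in the same way, by restricting from $\tu{\Spin}(2p+1,2p)$, respectively $\tu{\Spin}(2p+3,2p)$. In each case the ambient representation $\Pi$ is unitary, being a small (minimal-type) unitary representation constructed in \cite{LS08}. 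Consequently the restriction $\Pi|_{\tu G}$ is a unitary representation of $\tu G$.

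Step two is to show that $\Pi|_{\tu G}$ decomposes discretely and admissibly as a finite orthogonal direct sum, and that each of our small representations is one of the summands. I would argue on the level of $\tu K$-types. The branching from the larger maximal compact group to $\tu K$ is given by interlacing patterns, and the $\tu K$-spectrum of $\Pi$ splits into disjoint families. These families are distinguished by the genuine central character (Lemma~\ref{l:central-char}), by the sign of the last coordinate, and by the parity conditions in Theorems~\ref{thm:K-types-even-even}--\ref{thm:K-types-2p3}. Each family is $(\fk g,\tu K)$-stable, since the center of $\tu G$ acts by a scalar on it and the remaining splitting is detected by the action of the component of the larger group normalizing $\tu G$. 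Each family carries infinitesimal character $\rho/2$ and a multiplicity-free $\tu K$-spectrum, so it is an irreducible summand, and it is then identified with $\Omega_i,\Phi_i,\Xi_i$ via the lowest $\tu K$-types of Proposition~\ref{p:lowest-K-types-even-even}. A closed invariant subspace of a unitary representation is unitary, so each small representation is unitary.

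I expect the main obstacle to be the irreducibility of each piece of the restriction, that is, that the $\tu K$-families really are irreducible $(\fk g,\tu K)$-modules. This is where the parity corrections of Remark~\ref{r:BTs18-corrections} enter. My first attempt would be a counting argument. Each summand of $\Pi|_{\tu G}$ has infinitesimal character $\rho/2$, and its composition factors are genuine with associated variety inside $\overline{\calO}$, hence they are small, i.e.\ lie in $\prod^s_{\rho/2}(\tu G)$. Matching the number of pieces with the counts of Proposition~\ref{p:RD2} and the disjointness of $\tu K$-spectra would then force each piece to be a single small representation. As a fallback I would invoke the general unitarity of the representations with maximal $\tau$-invariant constructed in \cite{Ts19}.
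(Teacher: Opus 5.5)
Your proposal matches the paper's argument: the paper also realizes these representations, via \cite[Section~4]{BTs18}, as irreducible direct summands of restrictions of the unitary small representations of \cite{LS08}, and notes that such summands inherit a positive definite invariant Hermitian form. The paper simply cites \cite{BTs18} for the irreducible-summand property, so your Step two is not needed. As sketched, it would not stand on its own: beyond the central-character splitting, the $\fk g$-stability of the $\tu K$-families and the infinitesimal character $\rho/2$ of each piece are not justified, since eigenspaces of an element that normalizes but does not centralize $\tu G$ need not be $\fk g$-stable.
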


\begin{proof}
In \cite[Section~4]{BTs18}, the representations under consideration are
obtained as irreducible summands of the restrictions of the unitary small
representations of nonlinear covers of odd orthogonal groups constructed in
\cite{LS08}. The restriction of a unitary representation to a closed subgroup
remains unitary, and each irreducible direct summand inherits a positive
definite invariant Hermitian form. It follows that all the small
representations considered here are unitary.
\end{proof}

\end{document}